\documentclass[11pt,reqno]{amsart}\setlength{\textwidth}{138.0mm}
\setlength{\oddsidemargin}{-1.0mm}\setlength{\evensidemargin}{-1.0mm}
\usepackage{amssymb,amsmath}\usepackage{subfigure}\newcommand{\be}{\begin{eqnarray}}
\newcommand{\ee}{\end{eqnarray}}
\newcommand{\beno}{\begin{eqnarray*}}
\newcommand{\eeno}{\end{eqnarray*}}
\newcommand{\const}{\mbox{\rm const.}}\newcommand{\rp}{\frac{1}{p}}\newcommand{\e}{{\varepsilon}}\newcommand{\R}{{\mathbb R}}\newcommand{\Z}{{\mathbb Z}}\newcommand{\B}{{\mathcal B}}\newcommand{\D}{{\mathcal D}}\newcommand{\M}{{\mathcal M}}\newcommand{\Ik}{{\mathcal I}}

\newcommand{\om}{\omega}
\newtheorem{theorem}{Theorem}\newtheorem{lemma}[theorem]{Lemma}\newtheorem{assumption}[theorem]{Assumption}\newtheorem{cor}[theorem]{Corollary}\newtheorem{prop}[theorem]{Proposition}\theoremstyle{definition}\newtheorem{defi}[theorem]{Definition}\theoremstyle{remark}\newtheorem{remark}[theorem]{Remark}\numberwithin{equation}{section}\input epsf.sty\begin{document}\thispagestyle{empty}

%
%
%
%
%
%
%
%
%
%
%
%
\newcommand{\G}{{\mathcal G}}
\newcommand{\C}{{\mathbb C}}
\newcommand{\Pa}{{P_{1,\theta}(y)}}
\newcommand{\Pb}{{P_{2,\theta}(y)}}
\newcommand{\Pshp}{{P_{1,t}^\sharp(x)}}
\newcommand{\Pflt}{{P_{1,t}^\flat(x)}}
\newcommand{\OTL}{{\Omega(\theta,\ell)}}
\newcommand{\rsz}{{R(\theta^*)}}
\newcommand{\phitil}{{\tilde{\varphi}_t}}
\newcommand{\lambdatil}{{\tilde{\lambda}}}
\newcommand{\al}{\alpha}
\newcommand{\ftpI}{\langle (g^2 + \tau^2 f^2)^{\frac{p}{2}} \rangle_{I}}
\newcommand{\fI}{\langle f \rangle_{I}}
\newcommand{\fpI}{\langle |f|^p \rangle_{I}}
\newcommand{\gI}{\langle g \rangle_{I}}
\newcommand{\mtp}{|(f,h_J)| = |(g,h_J)|}
\newcommand{\fJp}{\langle f \rangle_{J^+}}
\newcommand{\fJm}{\langle f \rangle_{J^-}}
\newcommand{\gJp}{\langle f \rangle_{J^+}}
\newcommand{\gJm}{\langle f \rangle_{J^-}}
\newcommand{\fIpm}{\langle f \rangle_{J^{\pm}}}
\newcommand{\gIpm}{\langle g \rangle_{J^{\pm}}}
\newcommand{\fIp}{\langle f \rangle_{I^{+}}}
\newcommand{\fIm}{\langle f \rangle_{I^{-}}}
\newcommand{\gIp}{\langle g \rangle_{I^{+}}}
\newcommand{\gIm}{\langle g \rangle_{I^{-}}}
\newcommand{\yone}{y_1}
\newcommand{\ytwo}{y_2}
\newcommand{\yonep}{y_1^+}
\newcommand{\ytwop}{y_2^+}
\newcommand{\yonem}{y_1^-}
\newcommand{\ytwom}{y_2^-}
\newcommand{\ythr}{y_3}

\newcommand{\xone}{x_1}
\newcommand{\xtwo}{x_2}
\newcommand{\xonep}{x_1^+}
\newcommand{\xtwop}{x_2^+}
\newcommand{\xonem}{x_1^-}
\newcommand{\xtwom}{x_2^-}
\newcommand{\xthr}{x_3}
\newcommand{\ptwo}{\frac{p}{2}}
\newcommand{\alp}{\al^+}
\newcommand{\alm}{\al^-}
\newcommand{\alpm}{\al^{\pm}}
\newcommand{\twrp}{\frac{2}{p}}
\newcommand{\sbeta}{\sqrt{\om^2 - \tau^2}}
\newcommand{\ssbeta}{\om^2 - \tau^2}
\newcommand{\sign}{\operatorname{sign}}
\newcommand{\rst}[1]{\ensuremath{{\mathbin\upharpoonright}%
\newcommand{\and}{\operatorname{and}}

\raise-.5ex\hbox{$#1$}}}

\title[Operator Norm of a Perturbation of the Martingale Transform]{{Perturbation of Burkholder's martingale transform and Monge--Amp\`ere equation}}
\author{Nicholas Boros}\address{Nicholas Boros, Dept. of Math., Michigan State University.
{\tt borosnic@msu.edu}}
\author{Prabhu Janakiraman}\address{Prabhu Janakiraman, Dept. of Math., Michigan State University.
\newline{\tt pjanakir1978@gmail.com}}
\author{Alexander Volberg}\address{Alexander Volberg, Dept. of  Math., Michigan State University. 
{\tt volberg@math.msu.edu}}

\begin{abstract}Let $\{d_k\}_{k \geq 0}$ be a complex martingale difference in $L^p[0,1],$ where $1<p<\infty,$ and $\{\e_k\}_{k \geq 0}$ a sequence in $\{\pm 1\}.$  We obtain the following generalization of Burkholder's famous result.  If $\tau \in [-\frac 12, \frac 12]$ and $n \in \Z_+$ then 
\beno
\left\|\sum_{k=0}^n{\left(\begin{array}{c}
\e_k\\
\tau \end{array}\right) d_k}\right\|_{L^p([0,1], \C^2)} \leq ((p^*-1)^2 + \tau^2)^{\frac 12}\left\|\sum_{k=0}^n{d_k}\right\|_{L^p([0,1], \C)}, 
\eeno
where $((p^*-1)^2 + \tau^2)^{\frac 12}$ is sharp and $p^*-1 = \max\{p-1, \frac 1{p-1}\}.$  For $2\leq p<\infty$ the result is also true with sharp constant for $\tau \in \R.$
\end{abstract}
\maketitle
\section{{\bf Introduction}} \label{sec:intro}
In a series of papers, \cite{B1} to \cite{Bu7}, Burkholder was able to compute the $L^p$ operator norm of the martingale transform, which we will denote as $MT.$  This was quite a revolutionary result, not only because of the result itself but because of the method for approaching the problem.  Burkholder's method in these early papers was inspiration for the Bellman function technique, which has been a very useful tool in approaching modern and classical problems in harmonic analysis (this paper will demonstrate the Bellman function technique as well).  But, the result itself has many applications.  One particular application of his result is for obtaining sharp estimates for singular integrals.   Consider the Ahlfors-Beurling operator, which we will denote as $T.$  Lehto, \cite{Le}, showed in 1965 that $\|T\|_p := \|T\|_{p \to p} \geq (p*-1) = \max\left\{p-1, \frac{1}{p-1}\right\}.$  Iwaniec conjectured in 1982, \cite{Iw},  that $\|T\|_p = p^*-1.$  The only progress toward proving that conjecture has been using Burkholder's result, see \cite{NV}, \cite{Ba1} and \cite{Ba} for the major results toward proving the conjecture.  However, Burkholder's estimates have been useful for lower bound estimates as well.  For example, Geiss, Montgomery-Smith and Saksman, \cite{GMS}, were able to show that $\|\Re T\|_p, \|\Im T\|_p \geq p^*-1,$ by using Burkholder's estimates.  The upper bound for these two operators were determined as $p^*-1$ by Nazarov, Volberg, \cite{NV} and Ba\~{n}uelos, M\`endez-Hern\`andez \cite{Ba1}, so we now have $\|\Re T\|_p = \|\Im T\|_p = p^*-1.$   Note that $\Re T$ the difference of the squares of the planar Riesz transforms, i.e. $T = R_1^2 - R_2^2.$

A recent result of Geiss, Montomery-Smith and Saksman, \cite{GMS} points to the following observation, though not immediately.  We can estimate linear combinations of squares of Riesz transforms if we know the corresponding estimate for a linear combination of the martingale transform and the identity operator.  In other words, one can get at estimates of the norm of $(R_1^2 - R_2^2) + \tau \cdot I,$ by knowing the estimates of the norm of $MT + \tau \cdot I.$  $\|MT + \tau \cdot I\|_p$ has only been computed for either $\tau =0$ by Burkholder \cite{Bu3} or $\tau = \pm 1$ by Choi \cite{Choi}.  The problem is still open for all other $\tau-$values and seems to be very difficult, though we have had some progress.  But, if we consider ``quadratic" rather than linear perturbations then things become more manageable (see \cite{BJV}, \cite{BJV1}).  This brings us to the focus of this paper, which is determining estimates for quadratic perturbations of the martingale transform, which will have connections to quadratic combinations of squares of Riesz transforms. 
 
To prove our main result we are going to take a slightly indirect approach.  Burkholder (see \cite{Bu3}) defined the martingale transform, $MT_{\e},$ as 
\beno
MT_{\e}\left(\sum_{k=1}^n{d_k}\right) := \sum_{k=1}^n\e_k{d_k}.  
\eeno
Then the main result can be stated as 
\beno 
\sup_{\vec{\e}}\left\|\left(\begin{array}{c}
MT_{\vec{\e}}\\
\tau I \end{array} \right)\right\|_{L^p(\C) \to L^{p}(\C^2)} = \sup_{\vec{\e}}\frac{\left\|\sum_{k=1}^n{\left(\begin{array}{c}
\e_k\\
\tau \end{array}\right) d_k}\right\|_{p}}{\|\sum_{k=1}^n{d_k}\|_p} = ((p^*-1)^2 + \tau^2)^{\frac 12},
\eeno
where $I$ is the identity transformation and $\tau$ is ``small".  However, rather than working with this martingale transform in terms of the martingale differences, in a probabilistic setting, we will define another martingale transform in terms of the Haar expansion of $L^p[0,1]$ functions and set up a Bellman function in that context.  Burkholder showed, in \cite{Bu7}, that these two different martingale transforms have the same $L^p$ operator norm, for $\tau = 0,$  so we expected a perturbation of these to act similarly and it turns out that they do.  For convenience, we will work with the martingale transform in the Haar setting.  Using the Bellman function technique will turn the problem of finding the sharp constant of the above estimate into solving a second order partial differential equation.  The beauty of this approach is that it gets right to the heart of the problem with very little advanced techniques needed in the process.  In fact, the only background material that is needed for the Bellman function technique approach, is some basic knowledge of partial differential equations and some elementary analysis. 

Observe that for $2 \leq p <\infty,$ the estimate from above is just an application of Minkowski's inequality on $L^{\frac p2}$ and Burkholder's original result.  But, this argument does not address sharpness, even though the constant obtained turns out to be the sharp constant for small $\tau$.  For $1<p<2,$ Minkowski's inequality (in $l^{\frac 2p}$) also plays a role, but to a lesser extent and cannot give the sharp constant, as we will see Proposition \ref{Bellmanlowerbound}.  It is, indeed, very strange that such sloppy estimation could give the estimate with sharp constant for $1 \leq p < \infty.$  We will now rigorously develop some background ideas needed to set up the Bellman function.

In our calculations we follow the scheme of \cite{VaVo3}, but our ``Dirichlet problem" for Monge--Amp\`ere is different.  For small $\tau$ the scheme works.  For large $\tau$ and $1<p<2$ it definitely must be changed as \cite{BJV} shows.  The amazing feature is the ``splitting" of the result to two quite different cases: $1<p<2$ and $2\leq p < \infty,$ where in the former case we know the result only for small $\tau,$ but in the latter one $\tau$ is unrestricted. 

\subsection{\bf Motivation of the Bellman function}\label{}
Let $I$ be an interval and $\al^{\pm} \in \R^+$ such that $\al^+ + \al^- = 1$.  These $\al^\pm$ generate two subintervals $I^\pm$ such that $|I^\pm| = \al^\pm |I|$ and $I = I^- \cup I^+$.  We can continue this decomposition indefinitely as follows.  Any sequence $\{\al_{n,m}:0<\al_{n,m}<1, 0 \leq m <2^n, 0 <n<\infty, \al_{n,2k} + \al_{n,2k+1} =1 \}$, generates the collection $\Ik := \{I_{n,m}:0 \leq m < 2^n, 0<n<\infty\}$ of subintervals of $I$, where $ I_{n,m} = I_{n,m}^- \cup I_{n,m}^+ = I_{n+1,2m+1} \cup I_{n+1,2m+1}$ and $ \al^- = \al_{n+1,2m}, \al^+ = \al_{n+1,2m+1}$.  Note that $I_{0,0} = I$.  

For any $J \in \Ik$ we define the Haar function $h_J := -\sqrt{\frac{\al^+}{\al^- |J|}}\chi_{J^-} + \sqrt{\frac{\al^-}{\al^+ |J|}}\chi_{J^+}$.  If $\max \{|I_{n,m}|:0\leq m<2^n\} \to 0$ as $n \to \infty$ then $\{h_{J}\}_{J \in \Ik}$ is an orthonormal basis for $L_0^2(I):= \{f \in L^2(I): \int_I{f} = 0\}.$  However, if we add one extra function then Haar functions form an orthonormal basis in $L^2[0,1].$   Fix $I_0 = [0,1]$ and $\Ik = \D$ as the dyadic subintervals of $I$.  Let $\D_n = \{I \in \D: |I| = 2^{-n}\}.$  We use the notation $\langle f \rangle_J$ to represent the average integral of $f$ over the interval $J \in \D$ and $\sigma(\D_n)$ to be the $\sigma$-algebra generated by $\D_n.$  For any $f \in L^1(I_0)$ we have the identity 
\be \label{haarexpansionoff}
\sum_{I\in D_n} {\langle f \rangle_I \chi_I} = \langle f \rangle_{(I_0)} \chi_{[0,1)} + \sum_{I \in \sigma(\D_n)}{(f,h_I)h_I}.
\ee
By Lebesgue differentiation, the left-hand side in (\ref{haarexpansionoff}) converges to $f$ almost everywhere, as $n \to \infty.$  So any $f \in L^p(I_0) \subset L^1(I_0)$ can be decomposed in terms of the Haar system as 
\beno
f = \langle f \rangle_{(I_0)} \chi_{(I_0)} + \sum_{I \in \D}{(f,h_I)h_I}.
\eeno  

In terms of the expansion in the Haar system we define the martingale transform, $g$ of $f,$ as 
\beno
g := \langle g \rangle_{(I_0)} \chi_{(I_0)} + \sum_{I \in \D}{\e_I(f,h_I)h_I},
\eeno  
where $\e_I \in \{\pm 1\}.$  Requiring that $|(g,h_J)| = |(f,h_J)|,$ for all $J \in \D,$ is equivalent to $g$ being the martingale transform of $f,$ for $f,g \in L^p(I_0).$   

Now we define the Bellman function as $\B(x_1,x_2,x_3):=$
\beno
\sup_{f,g}\{\ftpI: x_1 = \fI, x_2 = \gI,x_3 = \fpI, \mtp, \,\, \forall J \in \D\}
\eeno
on the domain $\Omega = \{x \in \R^3: x_3 \geq 0, |x_1|^p \leq x_3\}.$  The Bellman function is defined in this way, since we would like to know the value of the supremum of $\left\|\left(\begin{array}{c}
\ g\\
\ \tau f \end{array}\right)\right\|_p,$ where $g$ is the martingale transform of $f$.  Note that $|x_1|^p \leq x_3$ is just H\"{o}lder's inequality.  Even though the Bellman function is only being defined for real-valued functions, we can ``vectorize" it to work for complex-valued (and even Hilbert-valued) functions, as we will later demonstrate.  Finding the Bellman function will make proving the following main result quite easy.  We will call $\langle (g^2 + \tau^2 f^2)^{\frac p2} \rangle_I^{\frac 1p}$ the ``quadratic perturbation" of the martingale transform's norm $\langle |g|^p \rangle_I^{\frac 1p}.$

\begin{theorem}\label{mainresult}
Let $\{d_k\}_{k \geq 1}$ be a complex martingale difference in $L^p[0,1],$ where $1<p<\infty,$ and $\{\e_k\}_{k \geq 1}$ a sequence in $\{\pm 1\}.$  If $\tau \in [-\frac 12, \frac 12]$ and $n \in \Z_+$ then 
\beno
\left\|\sum_{k=1}^n{\left(\begin{array}{c}
\e_k\\
\tau \end{array}\right) d_k}\right\|_{L^p([0,1], \C^2)} \leq ((p^*-1)^2 + \tau^2)^{\frac 12}\left\|\sum_{k=1}^n{d_k}\right\|_{L^p([0,1], \C)}, 
\eeno
where $((p^*-1)^2 + \tau^2)^{\frac p2}$ is sharp.  The result is also true with sharp constant for $2 \leq p < \infty$ and $\tau \in \R.$
\end{theorem}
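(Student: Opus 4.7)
The strategy is to prove the theorem by the Bellman function argument outlined in the introduction.  Working in the Haar basis formulation, it suffices to exhibit an explicit majorant $M : \Omega \to \R_+$ of $\B$ satisfying three properties: an \emph{obstacle} $M(x_1, x_2, x_3) \geq (x_2^2 + \tau^2 x_1^2)^{p/2}$; a \emph{size} bound $M(x_1, x_2, x_3) \leq ((p^*-1)^2 + \tau^2)^{p/2}\, x_3$; and a \emph{main inequality} $M(P) \geq \al^- M(P^-) + \al^+ M(P^+)$ whenever $P = \al^- P^- + \al^+ P^+$ with $P, P^{\pm} \in \Omega$, $\al^{\pm} > 0$, $\al^- + \al^+ = 1$, and the Haar constraint $|x_2^+ - x_2^-| = |x_1^+ - x_1^-|$ is satisfied (this being the pointwise form of $|(f, h_J)| = |(g, h_J)|$).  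Granted such an $M$, the real-valued scalar theorem follows by iterating the main inequality along the dyadic tree for $f$ on $I_0 = [0,1]$ to depth $n$, applying the obstacle at the leaves (which, once integrated and passed to the limit via (\ref{haarexpansionoff}) and Lebesgue differentiation, yields $\langle (g^2 + \tau^2 f^2)^{p/2} \rangle_{I_0}$), and applying the size bound at the root (yielding $((p^*-1)^2 + \tau^2)^{p/2} \langle |f|^p \rangle_{I_0}$).  Complex- and Hilbert-valued martingales are then recovered by the standard rotation-and-averaging vectorization.

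The crux is constructing $M$.  Following the scheme of \cite{VaVo3}, the infinitesimal form of the main inequality reads $d^2 M \leq 0$ on the cone $\{|dx_2| = |dx_1|\}$, which after a change of variables recasts as a Monge--Amp\`ere equation on a planar section of $\Omega$.  The obstacle and size conditions prescribe Dirichlet data: $M$ should agree with $(x_2^2 + \tau^2 x_1^2)^{p/2}$ on the extremal boundary $\{|x_1|^p = x_3\}$ and realize the linear slope $((p^*-1)^2 + \tau^2)^{p/2}$ along a suitable direction.  The plan is to seek $M$ by foliating $\Omega$ with a one-parameter family of extremal line segments along which the main inequality degenerates to equality; this is naturally realized as a quadratic perturbation of the foliation underlying Burkholder's function for $\tau = 0$, organized around the level sets of $x_2^2 + \tau^2 x_1^2$.

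The decisive obstacle is exactly this Monge--Amp\`ere Dirichlet problem: one must check that the perturbed foliation, together with the boundary data dictated by the obstacle, produces a function that is concave in the required cone, whose linear size at the initial point matches the target, and whose foliating segments cover $\Omega$ without degeneration.  It is precisely here that the smallness hypothesis $|\tau| \leq \tfrac 12$ enters in the range $1 < p < 2$: it is the threshold below which the foliation remains regular inside $\Omega$; beyond it the ansatz must be replaced by the one developed in \cite{BJV}, whereas for $2 \leq p < \infty$ the foliation persists for every $\tau \in \R$, accounting for the dichotomy in the statement.  Once $M$ is in hand, verifying the three properties is a direct (if lengthy) computation.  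Sharpness of $((p^*-1)^2 + \tau^2)^{1/2}$ is then obtained separately, by rescaling Burkholder's extremal pair for $MT$ and computing the resulting quadratic perturbation ratio, whose limit supremum matches the claimed constant.
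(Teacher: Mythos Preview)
Your overall strategy matches the paper's: construct the Bellman function candidate via a Monge--Amp\`ere foliation, verify restrictive concavity (your ``main inequality''), and read off the constant. The identification of where $|\tau|\le\tfrac12$ enters for $1<p<2$ versus unrestricted $\tau$ for $p\ge 2$ is also correct.

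There is, however, a genuine gap in your formulation. The three properties you list for $M$ are mutually incompatible on $\Omega$. If both the obstacle $M(x_1,x_2,x_3)\ge (x_2^2+\tau^2 x_1^2)^{p/2}$ and the global size bound $M(x_1,x_2,x_3)\le ((p^*-1)^2+\tau^2)^{p/2}x_3$ held everywhere, then on the boundary $x_3=|x_1|^p$ you would force $(x_2^2+\tau^2 x_1^2)^{p/2}\le ((p^*-1)^2+\tau^2)^{p/2}|x_1|^p$, i.e.\ $|x_2|\le (p^*-1)|x_1|$, which is false for generic boundary points. Indeed the paper shows (Remark~\ref{lowerbndbeta}, Remark~\ref{lowerbndbeta3_2}) that the true Bellman function satisfies $B_\tau(x)>((p^*-1)^2+\tau^2)^{p/2}x_3$ on the implicit part of the domain, so no such global $M$ exists. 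The fix is that the size bound is only needed---and only holds---in the cone $|x_2|\le (p^*-1)|x_1|$, which is exactly the hypothesis in the Haar formulation (Theorem~\ref{realHaarmainresult}); the paper encodes this by passing to the auxiliary function $U_{p,\tau}(x,y)=\sup_{t\ge|x|^p}\{B_\tau(x,y,t)-B_\tau(0,0,1)t\}$ and showing $U_{p,\tau}\le 0$ precisely on that cone (Lemma~\ref{leastbiconcavemajorantlemma}, Proposition~\ref{Uleastzigzagmajorant}).

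A second, smaller point: sharpness is not obtained simply by ``rescaling Burkholder's extremal pair.'' The paper proves $B_\tau\le\B_\tau$ (Proposition~\ref{Bellmanlowerbound}) by two different mechanisms depending on the region---a Minkowski argument on the implicit piece for $1<p<2$, and genuinely new recursive extremal functions at $x_1=0$ for $2<p<\infty$, followed by a concavity-versus-linearity comparison along characteristics. Your outline should reflect that this lower bound requires separate work beyond the $\tau=0$ extremals.
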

Note that when $\tau = 0$ we get Burkholder's famous result \cite{Bu3}.   

Now that we have the problem formalized, notice that $\B$ is independent of the initial choice of $I_0$ (which we will just denote $I$ from now on) and $\{\al_{n,m}\}_{n,m}$, so we return to having them arbitrary.  Finding $\B$ when $p=2$ is easy, so we will do this first.

\begin{prop}
If $p=2$ then $\B(x) = x_2^2 -x_1^2 + (1+ \tau^2)x_3.$
\end{prop}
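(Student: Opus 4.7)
The strategy is to observe that when $p=2$ the functional $\langle(g^2+\tau^2 f^2)^{p/2}\rangle_I = \langle g^2\rangle_I + \tau^2 \langle f^2\rangle_I$ is linear in $f^2$ and $g^2$, so Parseval's identity in the Haar system will evaluate both terms exactly in terms of $x_1,x_2,x_3$, with no freedom left to the admissible pair $(f,g)$. Hence the supremum in the definition of $\B$ is attained and is equal to the claimed closed-form expression.

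First I would record the trivial identity $\langle f^2\rangle_I = \langle |f|^p\rangle_I = x_3$, which is immediate from $p=2$. Next, using the Haar expansion
\[
f = \langle f\rangle_I\chi_I + \sum_{J\in\D}(f,h_J)h_J,\qquad g = \langle g\rangle_I\chi_I + \sum_{J\in\D}\e_J(f,h_J)h_J,
\]
the orthonormality of $\{|I|^{-1/2}\chi_I\}\cup\{h_J\}_{J\in\D}$ in $L^2(I)$ gives
\[
\int_I f^2 = x_1^2|I| + \sum_{J\in\D}|(f,h_J)|^2,\qquad \int_I g^2 = x_2^2|I| + \sum_{J\in\D}|(g,h_J)|^2.
\]
The martingale-transform constraint $|(g,h_J)|=|(f,h_J)|$ for every $J\in\D$ then forces
\[
\langle g^2\rangle_I - x_2^2 \;=\; \langle f^2\rangle_I - x_1^2 \;=\; x_3 - x_1^2,
\]
so $\langle g^2\rangle_I = x_2^2 - x_1^2 + x_3$.

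Combining the two computations,
\[
\langle g^2 + \tau^2 f^2\rangle_I \;=\; x_2^2 - x_1^2 + x_3 + \tau^2 x_3 \;=\; x_2^2 - x_1^2 + (1+\tau^2)x_3,
\]
independently of the choice of admissible $(f,g)$. Taking the supremum over all such $(f,g)$ yields $\B(x) = x_2^2 - x_1^2 + (1+\tau^2)x_3$, as claimed. There is essentially no obstacle here: the case $p=2$ reduces to a Hilbert-space isometry calculation, and the difficulty of the paper lies entirely in extending this formula to general $p\in(1,\infty)$, where the functional ceases to be quadratic and one must genuinely solve a Monge--Amp\`ere Dirichlet problem.
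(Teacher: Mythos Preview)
Your proposal is correct and follows essentially the same approach as the paper: both arguments use Parseval's identity for the Haar system together with the constraint $|(f,h_J)|=|(g,h_J)|$ to compute $\langle g^2\rangle_I$ and $\langle f^2\rangle_I$ exactly in terms of $x_1,x_2,x_3$, leaving no freedom in the supremum. The only cosmetic difference is that you isolate $\langle g^2\rangle_I = x_2^2 - x_1^2 + x_3$ directly, whereas the paper passes through the intermediate expression $x_2^2 + \tau^2 x_1^2 + (1+\tau^2)(x_3 - x_1^2)$ before simplifying.
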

\begin{proof}
Since $f \in L^2(I)$ then $f = \fI \chi_I + \sum_{J \in \D}{(f,h_J)h_J}$ implies
\beno
\langle |f|^2 \rangle_I & = & \frac{1}{|I|}\int_I{|f|^2}\\ 
& = &  \fI^2 + 2 \fI \sum_{J \in \D}{(f,h_J)}\frac{1}{|I|}\int_I{h_J} + \frac{1}{|I|}\int_I{\sum_{J,K \in \D}{(f,h_J)(f,h_K)h_J h_K}}\\
& = & \fI^2 + \frac{1}{|I|}\sum_{J \in \D}{|(f,h_J)|^2}.\\
\eeno
So $\|f\|_2^2 = |I|x_3 = |I| x_1^2 + \sum_{J \in \D}{|(f,h_J)|^2}$ and similarly 
\beno 
\|g\|_2^2 = |I| x_2^2 + \sum_{J \in \D}{|(g,h_J)|^2} = |I| x_2^2 + \sum_{J \in \D}{|(f,h_J)|^2}.
\eeno
Now we can compute $\B$ explicitly, $(p = 2)$
\beno
\ftpI & = & \langle |g|^2 \rangle_I + \tau^2 \langle |f|^2 \rangle_I
= x_2^2 + \tau^2 x_1^2 + (1 + \tau^2)\frac{1}{|I|}\sum_{J \in \D}{|(f,h_J)|^2}\\
& = & x_2^2 + \tau^2 x_1^2 + (1 + \tau^2)(x_3-x_1^2). \qedhere 
\eeno 
\end{proof}

\subsection{\bf Outline of Argument to Prove Main Result}

Computing the Bellman function, $\B,$ for $p \neq 2,$ is much more difficult, so more machinery is needed. In Section \ref{Bellmanfunctionproperties} we will derive properties of the Bellman function, the most notable of which is concavity under certain conditions.  Finding a $\B$ to satisfy the concavity will amount to solving a partial differential equation, after adding an assumption.  This PDE has a solution on characteristics that is well known, so we just need to find an explicit solution from this, using the Bellman function properties.  How the characteristics behave in the domain of definition for the Bellman function will give us several cases to consider.  In Section \ref{MongeAmpsolution} we will get a Bellman function candidate for $1<p<\infty$ by putting together several cases.  Once we have what we think is the Bellman function, we need to show that it has the necessary smoothness and that Assumption \ref{degenerateassumption} was not too restrictive to give us the Bellman function.  This is covered in Section \ref{showingwehavetrueBellmanfunction}.  Finally the main result is shown in Section \ref{sectionmainresult}.  In Section \ref{addendum}, we show why several cases did not lead to a Bellman function candidate and why the value of $\tau$ was restricted for the Bellman function candidate.
 
\subsection{\bf Properties of the Bellman function}\label{Bellmanfunctionproperties}
One of the properties we nearly always have (or impose) for any Bellman function, is concavity (or convexity).   It is not true that $\B$ is globally concave, on all of $\Omega,$ but under certain conditions it is concave.  The needed condition is that $g$ is the martingale transform of $f,$ or $|x_1^+ -x_1^-| = |x_2^+ -x_2^-|$ in terms of the variables in $\Omega.$ 

\begin{defi}
We say that the function $B$ on $\Omega$ has restrictive concavity if for all $x^{\pm} \in \Omega$ such that $x = \al^+ x^+ + \al^- x^-, \al^+ +\al^- =1$ and $|x_1^+ -x_1^-| = |x_2^+ -x_2^-|$ then $\B(x) \geq \al^+ \B(x^+) + \al^- \B(x^-).$
\end{defi}

\begin{prop}\label{concavity}
The Bellman function $\B$ is restrictively concave in the $x-$variables.  
\end{prop}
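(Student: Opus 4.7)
The plan is to establish restrictive concavity by the standard gluing/splitting argument, transferring near‑optimizers for $\B(x^\pm)$ into an admissible pair on a single interval whose top‑level Haar split realizes the splitting $x=\alp x^+ + \alm x^-$. Fix $x,x^{\pm}\in\Omega$ with $x=\alp x^{+}+\alm x^{-}$, $\alp+\alm=1$, and $|x_{1}^{+}-x_{1}^{-}|=|x_{2}^{+}-x_{2}^{-}|$; the goal is $\B(x)\geq \alp\B(x^{+})+\alm\B(x^{-})$. Fix an interval $I$ with the split $I=I^{-}\cup I^{+}$ and $|I^{\pm}|=\alpm |I|$. Given $\e>0$, pick admissible near‑optimizing pairs $(f^{\pm},g^{\pm})$ for the supremum defining $\B(x^{\pm})$, realized on the intervals $I^{\pm}$ themselves. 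All quantities entering the definition of $\B$ (the averages $\langle\cdot\rangle$, the $L^{p}$‑average, the Haar coefficients, and the target average) are invariant under affine change of variable on the underlying interval, so the supremum is independent of the interval chosen, and pairs exist satisfying $\langle f^{\pm}\rangle_{I^{\pm}}=x_{1}^{\pm}$, $\langle g^{\pm}\rangle_{I^{\pm}}=x_{2}^{\pm}$, $\langle |f^{\pm}|^{p}\rangle_{I^{\pm}}=x_{3}^{\pm}$, the transform condition $|(f^{\pm},h_{K})|=|(g^{\pm},h_{K})|$ for every $K\in\D$ with $K\subset I^{\pm}$, and $\langle ((g^{\pm})^{2}+\tau^{2}(f^{\pm})^{2})^{p/2}\rangle_{I^{\pm}}>\B(x^{\pm})-\e$.

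Glue these together by setting $f:=f^{+}\chi_{I^{+}}+f^{-}\chi_{I^{-}}$ and $g:=g^{+}\chi_{I^{+}}+g^{-}\chi_{I^{-}}$ on $I$. The three scalar constraints are then automatic: $\fI=\alp x_{1}^{+}+\alm x_{1}^{-}=x_{1}$, and similarly $\gI=x_{2}$ and $\langle |f|^{p}\rangle_{I}=x_{3}$. For the martingale transform condition $|(f,h_{J})|=|(g,h_{J})|$ on each $J\in\D$: whenever $J$ is strictly contained in $I^{+}$ or $I^{-}$ the equality is inherited from $(f^{\pm},g^{\pm})$, so the only remaining case is $J=I$. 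Using the explicit expression for $h_{I}=-\sqrt{\alp/(\alm|I|)}\,\chi_{I^{-}}+\sqrt{\alm/(\alp|I|)}\,\chi_{I^{+}}$, a direct computation gives
\beno
(f,h_{I})=\sqrt{\alp\alm|I|}\,(x_{1}^{+}-x_{1}^{-}),\qquad (g,h_{I})=\sqrt{\alp\alm|I|}\,(x_{2}^{+}-x_{2}^{-}),
\eeno
so $|(f,h_{I})|=|(g,h_{I})|$ holds precisely under the hypothesis $|x_{1}^{+}-x_{1}^{-}|=|x_{2}^{+}-x_{2}^{-}|$. This is the single algebraic step that forces the restriction on the concavity.

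Therefore $(f,g)$ is admissible in the supremum defining $\B(x)$, and additivity of the integral over $I=I^{+}\cup I^{-}$, together with $|I^{\pm}|/|I|=\alpm$, gives
\beno
\ftpI=\alp\langle ((g^{+})^{2}+\tau^{2}(f^{+})^{2})^{p/2}\rangle_{I^{+}}+\alm\langle ((g^{-})^{2}+\tau^{2}(f^{-})^{2})^{p/2}\rangle_{I^{-}}>\alp\B(x^{+})+\alm\B(x^{-})-\e.
\eeno
Letting $\e\downarrow 0$ yields the desired inequality. The only nontrivial ingredient is the computation of the top‑level Haar coefficients, which is exactly the reason the concavity is restrictive rather than unconditional; everything else is routine bookkeeping about averages and affine invariance.
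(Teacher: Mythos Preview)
Your proof is correct and follows essentially the same approach as the paper: pick near-optimizers $(f^{\pm},g^{\pm})$ on the subintervals $I^{\pm}$, glue them to form $(f,g)$ on $I$, verify that the top-level Haar coefficient identity $|(f,h_I)|=|(g,h_I)|$ is exactly the hypothesis $|x_1^+-x_1^-|=|x_2^+-x_2^-|$, and conclude by additivity of the average. If anything, you are slightly more explicit than the paper in noting the affine invariance that lets the near-optimizers be realized directly on $I^{\pm}$ and in checking the transform condition on the proper subintervals $J\subsetneq I^{\pm}$.
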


\begin{proof}
Let $\e > 0$ be given and $x^{\pm} \in \Omega$.  By the definition of $\B$, there exists $f^{\pm}, g^{\pm}$ on $I^{\pm}$ such that $\fIpm = x_1^{\pm},  \gIpm = x_2^{\pm}, \langle|f^{\pm}|^p\rangle_{I^{\pm}} = x_3^{\pm}$ and 
\beno
\B(x^{\pm}) - \langle[(g^{\pm})^2+\tau^2 (f^{\pm})^2]^{\frac{p}{2}}\rangle_{I^{\pm}} \leq \e
\eeno
On $I = I^+ \cup I^-$ we define $f$ and $g$ as $f := f^+ \chi_{I^+}+ f^- \chi_{I^-}, g := g^+ \chi_{I^+} + g^- \chi_{I^-}.$  So, 
\beno |x_1^+ - x_1^-| & = & |\fIp - \fIm| = \left|\frac{1}{|I^+|}\int_{|I^-|}{f}-\frac{1}{|I^-|}\int_{I^-}{f}\right|\\
& = & \left|\frac{1}{\al^+|I|}\int_{|I^-|}{f}-\frac{1}{\al^-|I|}\int_{I^-}{f}\right| = \frac{1}{|I|}\left|\int{f\left(\frac{1}{\al^+}\chi_{I^+}-\frac{1}{\al^-}\chi_{I^-}\right)}\right|\\
& = & \sqrt{\frac{|I|}{\al^+ \al^-}}\left|\int{f h_I}\right| =: \sqrt{\frac{|I|}{\al^+ \al^-}}\left|(f,h_I)\right|.\\
\eeno
Similarly,$|x_2^+ - x_2^-| = \sqrt{\frac{|I|}{\al^+ \al^-}}\left|(g,h_I)\right|.$  So our assumption $|x_1^+ - x_1^-| = |x_2^+ - x_2^-|$ is equivalent to $|(f, h_I)| = |(g, h_I)|.$  Since $x_1 = \fI, x_2 = \gI$ and $x_3 = \fpI$ then $f$ and $g$ are test functions and so 
\beno
\B(x) & \geq & \ftpI\\ 
&=& \al^+ \langle[(g^{+})^2+\tau^2 (f^{+})^2]^{\frac{p}{2}}\rangle_{I^{+}} + \al^- \langle[(g^{-})^2+\tau^2 (f^{-})^2]^{\frac{p}{2}}\rangle_{I^{-}}\\
&\geq& \al^+ \B(x^+) + \al^- \B(x^-) - \e. \qedhere\\
\eeno
\end{proof}

At this point we don not quite have concavity of $\B$ on $\Omega$ since there is the restriction $|x_1^+ - x_1^-| = |x_2^+ - x_2^-|$ needed.  To make this condition more manageable, we will make a change of coordinates.  Let $ y_1 :=  \frac{x_2 + x_1}{2}, y_2 :=  \frac{x_2 - x_1}{2}$ and $y_3 := x_3$.  We will also change notation for the Bellman function and corresponding domain in the new variable $y$.  Let $\M(y_1, y_2, y_3) := \B(x_1,x_2,x_3) = \B(y_1 - y_2,y_1 + y_2,y_3).$  Then the domain of definition for $\M$ will be $\Xi := \{y \in \R^3: y_3 \geq 0, |y_1 - y_2|^p \leq y_3\}.$  

If we consider $x^{\pm} \in \Omega$ such that $|x_1^+ - x_1^-| = |x_2^+ - x_2^-|,$ then the corresponding points $y^{\pm} \in \Xi$ satisfy either $\yonep = \yonem$ or $\ytwop = \ytwom.$  This implies that fixing $\yone$ as $\yonep = \yonem$ or $\ytwo$ as $\ytwop = \ytwom$ will make $\M$ concave with respect to $\ytwo, \ythr$ under fixed $\yone$ and with respect to $\yone, \ythr$ under $\ytwo$ fixed.  

Rather than using Proposition \ref{concavity} to check the concavity of the Bellman function we can just check it in the following way, assuming $\M$ is $C^2.$  Let $j \neq i \in \{1,2\}$ and fix $y_i$ as $y_i^+ = y_i^-.$  Then $\M$ as a function of $y_j, y_3$ is concave if \[\left( \begin{array}{cc}
\M_{y_jy_j} & \M_{y_jy_3}\\
\M_{y_3y_j} & \M_{y_3y_3} \end{array} \right) \leq 0,\] which is equivalent to 
\beno
\M_{y_j y_j} \leq 0, \M_{y_3 y_3} \leq 0, D_j = \M_{y_j y_j}\M_{y_3 y_3} - \M_{y_3 y_j} \M_{y_j y_3} \geq 0.
\eeno

\begin{prop}\label{D_i}
(Restrictive Concavity in $y-$variables)  Let $j \neq i \in \{1,2\}$ and fix $y_i$ as $y_i^+ = y_i^-.$  If $\M_{y_j y_j} \leq 0, \M_{y_3 y_3} \leq 0$ and $D_j = \M_{y_j y_j}\M_{y_3 y_3} - (\M_{y_j y_3})^2 \geq 0$ for $j = 1$ and $j = 2$ then $\M$ is Restrictively concave.
\end{prop}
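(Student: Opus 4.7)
The plan is to reduce restrictive concavity to the classical two-variable Hessian test. The first step is to translate the defining condition $|x_1^+ - x_1^-| = |x_2^+ - x_2^-|$ into the new coordinates. Since $x_1 = y_1 - y_2$ and $x_2 = y_1 + y_2$, putting $a = y_1^+ - y_1^-$ and $b = y_2^+ - y_2^-$ the condition becomes $|a - b| = |a + b|$, which holds if and only if $ab = 0$, i.e.\ either $y_1^+ = y_1^-$ or $y_2^+ = y_2^-$. Thus, to prove restrictive concavity of $\M$ it is enough to show that whenever $y^+, y^- \in \Xi$ share one coordinate $y_i$ (for some $i \in \{1,2\}$) and $y = \al^+ y^+ + \al^- y^-$ with $\al^\pm \geq 0$, $\al^+ + \al^- = 1$, one has $\M(y) \geq \al^+ \M(y^+) + \al^- \M(y^-)$.

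Next, for such an $i$, let $j \in \{1,2\} \setminus \{i\}$ and restrict $\M$ to the affine segment $\gamma(t) = y^- + t(y^+ - y^-)$, $t \in [0,1]$. Because $y_i$ is constant along $\gamma$, the composition $\M \circ \gamma$ is a function only of the two coordinates $(y_j, y_3)$. A preliminary check is that $\gamma$ remains inside $\Xi$: the defining constraints $y_3 \geq 0$ and $|y_1 - y_2|^p \leq y_3$ are both convex for $p > 1$, so $\Xi$ is convex and the segment lies in $\Xi$.

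With $\gamma \subset \Xi$ and $\M$ assumed $C^2$, concavity of the restriction $t \mapsto \M(\gamma(t))$ reduces to negative semi-definiteness of the $2 \times 2$ block of the Hessian of $\M$ indexed by $(y_j, y_3)$. The textbook criterion is that a symmetric $2 \times 2$ matrix is negative semi-definite if and only if its diagonal entries are non-positive and its determinant is non-negative, i.e.\ precisely the three inequalities $\M_{y_j y_j} \leq 0$, $\M_{y_3 y_3} \leq 0$, and $D_j \geq 0$ stated in the hypothesis. Hence $t \mapsto \M(\gamma(t))$ is concave on $[0,1]$, which yields $\M(y) \geq \al^+ \M(y^+) + \al^- \M(y^-)$. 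Applying this argument with $i = 1$ (invoking $D_2$) and with $i = 2$ (invoking $D_1$) covers both sign cases of $|x_1^+ - x_1^-| = |x_2^+ - x_2^-|$, completing the proof. I do not foresee any genuine obstacle: the only mild point is checking convexity of $\Xi$ so that the restriction makes sense, after which the conclusion is an immediate application of the standard Hessian concavity criterion.
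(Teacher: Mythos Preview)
Your proposal is correct and follows exactly the route the paper takes: the paper observes just before the proposition that $|x_1^+-x_1^-|=|x_2^+-x_2^-|$ forces either $y_1^+=y_1^-$ or $y_2^+=y_2^-$, and then reduces restrictive concavity to negative semidefiniteness of the $2\times 2$ Hessian block in $(y_j,y_3)$, which is the standard diagonal/determinant test. You are in fact a bit more careful than the paper, since you explicitly verify that $\Xi$ is convex so the connecting segment remains in the domain; the paper leaves this implicit.
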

The Bellman function, as it turns out, has many other nice properties.

\begin{prop} \label{Bellmanprop}
Suppose that $\M$ is $C^1(\R^3),$ then $\M$ has the following properties.

(i)  Symmetry:  $\M(\yone, \ytwo, \ythr) = \M(\ytwo,\yone,\ythr) = \M(-\yone, -\ytwo, \ythr)$

(ii)  Dirichlet boundary data:  $\M(\yone, \ytwo, (\yone - \ytwo)^{p}) = ((\yone + \ytwo)^2 + \tau^2 (\yone - \ytwo)^2)^{\frac{p}{2}}$

(iii)  Neumann conditions:  $\M_{\yone}=\M_{\ytwo}$ on $\yone = \ytwo$ and $\M_{\yone}=-\M_{\ytwo}$ on $\yone = -\ytwo$

(iv) Homogeneity:  $\M(r\yone, r\ytwo, r^p \ythr) = r^p\M(\yone, \ytwo, \ythr), \forall r > 0$

(v)  Homogeneity relation:  $\yone \M_{\yone} + \ytwo \M_{\ytwo} + p \ythr \M_{\ythr} = p\M$
\end{prop}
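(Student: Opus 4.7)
The plan is to derive all five properties directly from the definition of $\B$, by exploiting natural invariances of the class of admissible test pairs $(f,g)$. Since the substitution $y_1=(x_2+x_1)/2$, $y_2=(x_2-x_1)/2$ is an invertible linear change of coordinates, each statement about $\M$ will be most transparently checked for $\B$ and then transcribed. First I would verify (i) by observing that the maps $(f,g)\mapsto(-f,g)$ and $(f,g)\mapsto(-f,-g)$ each carry admissible pairs for $(x_1,x_2,x_3)$ bijectively to admissible pairs for $(-x_1,x_2,x_3)$ and $(-x_1,-x_2,x_3)$, respectively: the constraint $|(f,h_J)|=|(g,h_J)|$ and the $L^p$ normalization are preserved, and the integrand $(g^2+\tau^2 f^2)^{p/2}$ is unchanged, so the corresponding suprema agree. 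Reading these back in $y$-coordinates yields $\M(y_1,y_2,y_3)=\M(y_2,y_1,y_3)=\M(-y_1,-y_2,y_3)$.

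For (ii) I would restrict to the boundary $x_3=|x_1|^p$, where Jensen's inequality $|\langle f\rangle_I|^p\le\langle |f|^p\rangle_I$ holds with equality. Strict convexity of $t\mapsto|t|^p$ for $p>1$ then forces $f\equiv x_1$ a.e.\ on $I$, so $(f,h_J)=0$ for every $J\in\D$; the martingale transform requirement $|(g,h_J)|=|(f,h_J)|$ in turn forces $(g,h_J)=0$, and hence $g\equiv x_2$ a.e. Since only the constant pair is admissible, $\B(x_1,x_2,|x_1|^p)=(x_2^2+\tau^2 x_1^2)^{p/2}$, which transcribes to the stated Dirichlet value of $\M$.

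For (iii), combining the two symmetries in (i) also yields $\B(x_1,x_2,x_3)=\B(x_1,-x_2,x_3)$, so under the $C^1$ hypothesis one has $\B_{x_1}=0$ on $\{x_1=0\}$ and $\B_{x_2}=0$ on $\{x_2=0\}$. The chain rule gives $\M_{y_1}-\M_{y_2}=2\B_{x_1}$ and $\M_{y_1}+\M_{y_2}=2\B_{x_2}$; specializing to $y_1=y_2$ (where $x_1=0$) and to $y_1=-y_2$ (where $x_2=0$) produces the two Neumann identities. For (iv), if $(f,g)$ is admissible for $(x_1,x_2,x_3)$ and $r>0$, then $(rf,rg)$ is admissible for $(rx_1,rx_2,r^p x_3)$ while the integrand scales by $r^p$, so the corresponding suprema scale identically. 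Finally, (v) is the Euler identity obtained by differentiating the homogeneity relation in $r$ and setting $r=1$. The only step that is more than routine bookkeeping is (ii), where strict convexity of $|t|^p$ must be used to collapse admissible $f$ to a single constant and this constancy then transported to $g$ through the Haar-coefficient equality; everything else is a direct invariance or scaling argument applied to the defining supremum.
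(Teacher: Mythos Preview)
Your proof is correct and follows essentially the same approach as the paper: invariances of the test pairs $(f,g)\mapsto(\pm f,\pm g)$ for (i), equality in Jensen forcing $f$ (and then $g$) constant for (ii), the symmetry from (i) for (iii), scaling for (iv), and Euler's identity for (v). You supply more detail---the explicit chain-rule computation in (iii) and the strict-convexity justification in (ii)---but the underlying argument is identical to the paper's.
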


\begin{proof}
(i)  Note that we get $\B(\xone, \xtwo, \xthr) = \B(-\xone, \xtwo, \xthr) = \B(\xone, -\xtwo, \xthr)$ by considering test functions $\widetilde{f} = -f$ and $\widetilde{g} = -g$.  Change coordinates from $x$ to $y$ and the result follows.

(ii)  On the boundary $\{\xthr = |\xone|^p\}$ of $\Omega$ we see that 
\beno
\frac{1}{|I|} \int_I{|f|}^p = \fpI = \xthr = |\xone|^p = |\fI|^p = \left|\frac{1}{|I|} \int_I{f}\right|^p
\eeno
is only possible if $f \equiv \const$  (i.e. $f = \xone$).  But, $\mtp$ for all $J \in \Ik$, which implies that $g \equiv \const$ (i.e. $g = \xtwo$).  Then $\B(\xone, \xtwo, |\xone|^p) = \ftpI = (\xtwo^2 + \tau^2 \xone^2)^\ptwo.$  Changing coordinates gives the result.

(iii)  This follows from from (i).

(iv)  Consider the test functions $\widetilde{f} = rf, \widetilde{g} = rg$

(v)  Differentiate (iv) with respect to $r$ and evaluate it at $r=1.$ \qedhere
\end{proof}

Now that we have all of the properties of the Bellman function we will turn our attention to actually finding it.  Proposition \ref{D_i} gives us two partial differential inequalities to solve, $D_1 \geq 0, D_2 \geq 0$, that the Bellman function must satisfy.  Since the Bellman function is the supremum of the left-hand side of our estimate under the condition that $g$ is the martingale transform of $f,$ and must also satisfy the estimates in Proposition \ref{D_i}, then it seems reasonable that the Bellman function (being the optimal such function) may satisfy the following, for either $j= 1$ or $j=2$:
\beno
D_j = \M_{y_jy_j}\M_{y_3y_3} - (\M_{y_3y_j})^2 = 0.
\eeno
The PDE that we now have is the well known Monge--Amp\`ere equation which has a solution.  Let us make it clear that we have added an assumption.

\begin{assumption}\label{degenerateassumption}
$D_j = \M_{y_jy_j}\M_{y_3y_3} - (\M_{y_3y_j})^2 = 0,$ for either $j=1$ or $j=2.$
\end{assumption}
Adding this assumption comes with a price.  Any function that we construct, satisfying all properties of the Bellman function, must somehow be shown to be the Bellman function.  We will refer to any function satisfying some, or all Bellman function properties as a Bellman function candidate.  
\begin{prop} \label{Pogorelov}
For $j=1$ or $2,$ $\M_{y_jy_j}\M_{y_3y_3} - (\M_{y_3y_j})^2 = 0$ has the solution $M(y) = y_jt_j + y_3t_3+t_0$ on the characteristics $y_jdt_j + y_3dt_3 + dt_0=0$, which are straight lines in the $y_j \times y_3$ plane.  Furthermore, $t_0, t_j, t_3$ are constant on characteristics with the property $M_{y_j} = t_j, M_{y_3} = t_3.$ 
\end{prop}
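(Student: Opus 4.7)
The plan is to exploit the classical fact that a vanishing Hessian determinant of a $C^2$ function of two variables forces its graph to be a ruled (developable) surface. Working in the $(y_j,y_3)$-plane with $y_i$ frozen, I reinterpret the equation $M_{y_jy_j}M_{y_3y_3}-(M_{y_jy_3})^2=0$ as saying that the gradient map
\[
\Psi:(y_j,y_3)\mapsto (M_{y_j},M_{y_3})=:(t_j,t_3)
\]
has everywhere-vanishing Jacobian, so its image is (locally, where the Hessian is of constant rank one) a curve rather than an open region. Consequently $t_j$ and $t_3$ satisfy a functional relation, and the level sets $\Psi^{-1}(\text{pt})$ foliate the plane by a one-parameter family of curves along each of which both $t_j$ and $t_3$ are constant. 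These level curves will be identified as the characteristics.

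Next, I introduce the candidate affine expression by setting
\[
t_0 := M - y_j t_j - y_3 t_3,
\]
a Legendre-type combination. Using $dM=M_{y_j}\,dy_j+M_{y_3}\,dy_3=t_j\,dy_j+t_3\,dy_3$, a direct differentiation gives $dt_0=-y_j\,dt_j-y_3\,dt_3$. Along a level curve of $\Psi$ we have $dt_j=dt_3=0$, so $dt_0=0$ there as well, which means $t_0$ is a third constant on each characteristic. Rearranging produces the representation $M(y)=y_jt_j+y_3t_3+t_0$ on each characteristic with $(t_j,t_3,t_0)$ a triple of constants, and the relations $M_{y_j}=t_j,\,M_{y_3}=t_3$ are built in by construction.

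Finally, to see that each characteristic is a straight line in the $(y_j,y_3)$-plane, I parametrize the family of characteristics by a real parameter $s$, so that $t_j=t_j(s),\,t_3=t_3(s),\,t_0=t_0(s)$ across the foliation. The relation $dt_0=-y_j\,dt_j-y_3\,dt_3$, now read across characteristics at a point sitting on the $s$-th characteristic, becomes
\[
y_j\,t_j'(s)+y_3\,t_3'(s)+t_0'(s)=0,
\]
which is precisely the equation displayed in the statement. For each fixed $s$ this is a single linear equation in $(y_j,y_3)$ and hence cuts out a straight line; equivalently, one recognizes it as the envelope equation of the one-parameter family of affine planes $z=y_jt_j(s)+y_3t_3(s)+t_0(s)$ whose envelope is the graph of $M$.

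The chief subtlety is the regularity underpinning the first step: one must restrict to a sub-region of $\Xi$ where the Hessian has constant rank one so that $\Psi$ has a genuine one-dimensional image and the implicit-function machinery produces an actual foliation rather than pathological behaviour at isolated degeneracies. This is standard Pogorelov theory, and for our later construction of the Bellman candidate it suffices, since the degenerate loci are harmless and the explicit $M$ produced on each subregion can be patched together using the symmetry and boundary data from Proposition~\ref{Bellmanprop}.
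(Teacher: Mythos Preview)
Your argument is a correct and essentially complete sketch of the classical Pogorelov argument for degenerate Monge--Amp\`ere equations in two variables: the vanishing Hessian forces the gradient map to have one-dimensional image, the Legendre combination $t_0:=M-y_jt_j-y_3t_3$ is then also constant on each fibre, and differentiating the representation $M=y_jt_j(s)+y_3t_3(s)+t_0(s)$ and cancelling against $M_{y_j}=t_j,\,M_{y_3}=t_3$ yields the linear constraint $y_jt_j'(s)+y_3t_3'(s)+t_0'(s)=0$ that pins each characteristic to a straight line.

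The paper itself does not prove this proposition at all: immediately after the statement it simply records ``This is a result of Pogorelov, see \cite{Pog}, \cite{VaVo}'' and moves on. So you have supplied strictly more than the paper does, and what you supplied is the standard derivation one finds in those references. Your caveat about restricting to the open region where the Hessian has rank exactly one is appropriate and matches how the result is used downstream: the Bellman candidate is built piecewise on subregions of $\Xi_+$ determined by the characteristic geometry, and the rank-zero locus (where $M$ would be affine in both $y_j$ and $y_3$) either does not arise or is absorbed into the Case~$(2)$ explicit solution.
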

This is a result of Pogorelov, see \cite{Pog}, \cite{VaVo}.  Now that we have a solution $M$ to the Monge--Amp\`ere, we need get rid of $t_0, t_j, t_3$ so that we have an explicit form of $M,$ without the characteristics.  We note that a solution to the Monge--Amp\`ere is not necessarily the Bellman function.  It must satisfy the restrictive concavity of Proposition \ref{D_i}, be $C^1$-smooth, and satisfy the properties of Proposition \ref{Bellmanprop}.  The restrictive concavity property is one of the key deciding factors of whether or not we have a Bellman function in many cases.  Even if the Monge--Amp\`ere solution satisfies all of those conditions, it must still be shown to be equal to the Bellman function, because we added an additional assumption (Assumption \ref{degenerateassumption}) to get the Monge--Amp\`ere solution as a starting point. This will be considered rigorously in Section \ref{showingwehavetrueBellmanfunction}, after we obtain a solution to the Monge--Amp\`ere equation, with the appropriate Bellman function properties.  So from this point on we will use $M$ and $B$ to denote solutions to the Monge--Amp\`ere equation, i.e. Bellman function candidates, and $\M$ and $\B$ to denote the true Bellman function. 

\section{\bf Computing the Bellman function candidate from the Monge--Amp\`ere solution}\label{MongeAmpsolution}
Due to the symmetry property of $\M,$ from Proposition \ref{Bellmanprop}, we only need to consider a portion of the domain $\Xi,$ which we will denote as, $\Xi_+ := \{y: -\yone \leq \ytwo \leq \yone, \ythr \geq 0, (\yone -\ytwo)^p \leq \ythr \}.$  Since the characteristics are straight lines, then one end of each line must be on the boundary $\{y: (\yone - \ytwo)^p = \ythr\}$.  Let $U$ denote the point at which the characteristic touches the boundary.  Furthermore, the characteristics can only behave in one of the following four ways, since they are straight lines in the plane:

(1)  The characteristic goes from $U$ to $\{y:\yone = \ytwo\}$

(2)  The characteristic goes from $U$ to to infinity, running parallel to the $\ythr$-axis

(3)  The characteristic goes from $U$ to $\{y:\yone = -\ytwo\}$ 

(4)  The characteristic goes from $U$ to $\{y:(\yone -\ytwo)^p = \ythr \}$

To find a Bellman function candidate we must first fix a variable ($\yone$ or $\ytwo$) and a case for the characteristics.  Then we use the Bellman function properties to get rid of the characteristics.  If the Monge--Amp\`ere solution satisfies restrictive concavity, then it is a Bellman function candidate.  However, checking the restrictive concavity is quite difficult in many of the cases, since it amounts to doing second derivative estimates for an implicitly defined function.  Let us now find our Bellman function candidate.

\begin{remark}\label{casesandnotation}
Since we will have either $\yone$ or $\ytwo$ fixed in each case, then there will be eight cases in all.  Let $(1_j), (2_j), (3_j), (4_j)$ denote the case when $M_{y_jy_j}M_{y_3y_3} - (M_{y_3y_j})^2 = 0$ and $y_i$ is fixed, where $i \neq j$.  Also, we will denote $G(z_1, z_2):= (z_1 +z_2)^{p-1}[z_1 - (p-1)z_2]$ and $\om := \left(\frac{\M(y)}{y_3}\right)^{\frac 1p}$ from this point on.
\end{remark}

\subsection{\bf Bellman candidate for $2<p<\infty$}\label{solutionpgreaterthan2}

The solution to the Monge--Amp\`ere equation when $2<p<\infty,$ is only partially valid on the domain in two cases, due to restrictive concavity.  Case $(1_2),$ will give us an implicit solution that is valid on part of $\Xi_+$ and Case $(2_2)$ will give us an explicit solution for the remaining part of $\Xi_+.$  First, we deal with Case $(1_2).$

\subsubsection{\bf Case $(1_2)$}\label{partial1_2subsec}
Since we are considering Case $(1_2),$ then $\yone \geq 0$ is fixed until the point that we have the implicit solution independent of the characteristics satisfying all of the Bellman function properties.

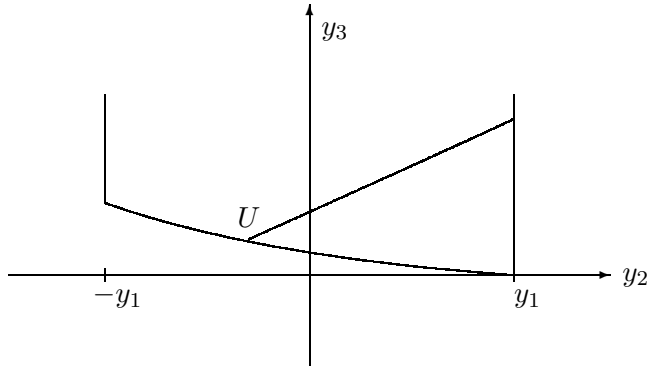
\begin{figure}[h]

\setlength{\unitlength}{0.8cm}

\begin{picture}(10,6)(-5,-1)
\put(-5,0){\vector(1,0){10}} 
\put(0,-1.5){\vector(0,1){6}} 
\put(5.2,-0.1){$y_2$} 

\put(-3.6,-.45){$-y_1$} 
\put(-3.4,-.1){\line(0,1){.2}} 

\put(-1.2,.8){$U$} 

\put(3.4, -.45){$y_1$} 
\put(0.2,4){$y_3$} 

\qbezier(3.4,0)(-0.8853,0.3)(-3.4,1.2) 

\put(-3.4,1.2){\line(0,1){1.8}} 
\qbezier(-1,.6)(3,2.4)(3.4,2.6)
\put(3.4,-.1){\line(0,1){3.1}} 

\end{picture}

\caption{Sample characteristic of solution from Case $(1_2)$} 
\label{characteritic1_2}

\end{figure}

\begin{prop}\label{solution1_2}
For $1<p<\infty$ and $\frac{p-2}{p}\yone < \ytwo < \yone$, $M$ is given implicitly by the relation $G(\yone +\ytwo, \yone -\ytwo) = \ythr G(\sqrt{\om^2 -\tau^2}, 1),$ where $G(z_1, z_2) := (z_1+z_2)^{p-1}[z_1-(p-1)z_2]$ on $z_1 + z_2 \geq 0$ and $\omega := \left(\frac{M(y)}{\ythr}\right)^{\frac 1p}.$
\end{prop}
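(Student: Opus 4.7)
The plan is to integrate the Monge--Amp\`ere equation (Assumption \ref{degenerateassumption} with $j=2$) along the characteristics of Case $(1_2)$ and eliminate the parameter using the Dirichlet, Neumann, and homogeneity identities of Proposition \ref{Bellmanprop}.

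Fix $y_1 \geq 0$ and parametrize the characteristic family by $s$, where $U_s := (y_1, s, (y_1-s)^p)$ is the endpoint of the characteristic on the boundary $\{y_3 = (y_1-y_2)^p\}$. By Proposition \ref{Pogorelov}, along the characteristic through $U_s$ one has $M = y_2 t_2(s) + y_3 t_3(s) + t_0(s)$ with $M_{y_2} = t_2$ and $M_{y_3} = t_3$. The Dirichlet identity $M(U_s) = \phi(s) := ((y_1+s)^2 + \tau^2(y_1-s)^2)^{p/2}$, together with its derivative in $s$, produces the two linear relations
\beno
s\,t_2 + (y_1-s)^p\,t_3 + t_0 = \phi(s), \qquad t_2 - p(y_1-s)^{p-1}\,t_3 = \phi'(s).
\eeno
At the other end of the characteristic, on $\{y_2 = y_1\}$, the Neumann condition $M_{y_1} = M_{y_2}$ combined with the homogeneity relation (v) of Proposition \ref{Bellmanprop} yields (after using the linear form of $M$) the third relation $t_0 = \frac{2-p}{p}\,y_1\,t_2$.

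I would then solve these three equations for $t_0(s), t_2(s), t_3(s)$ and substitute back into $M = y_2 t_2 + y_3 t_3 + t_0$ to obtain $M$ at a generic interior point $y$ in terms of the characteristic parameter $s$. The key algebraic observation is that at the foot of the characteristic, $\sqrt{\omega^2 - \tau^2}\bigr|_{\mathrm{bdy}} = (y_1+s)/(y_1-s)$, and a direct computation using the factorization of $G$ shows $y_3\,G(\sqrt{\omega^2-\tau^2},1)\bigr|_{\mathrm{bdy}} = G(y_1+s, y_1-s) = (2y_1)^{p-1}[(2-p)y_1 + ps]$. Eliminating $s$ between the explicit formulas for $t_j(s)$ and the relation $M = \omega^p y_3$ should then collapse into the asserted implicit relation $G(y_1+y_2, y_1-y_2) = y_3\,G(\sqrt{\omega^2-\tau^2},\, 1)$.

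Finally, I would verify the range $\frac{p-2}{p}y_1 < y_2 < y_1$ by tracking which parameters $s$ produce characteristics passing through a given interior point. The upper bound $y_2 < y_1$ is the defining inequality of $\Xi_+$ above the diagonal; the lower bound $y_2 > \frac{p-2}{p} y_1$ marks the transition where the slope of the characteristic (given by $t_2'(s)/t_3'(s)$) becomes vertical and Case $(2_2)$ takes over. The main obstacle will be the eliminant step: recognizing that the algebraic combination of the three linear equations condenses precisely into the symmetric form involving $G$. This requires choosing the right parametrization, namely $u = (y_1+s)/(y_1-s)$ (which equals $\sqrt{\omega^2-\tau^2}$ at the boundary), and repeatedly invoking the factorization $G(z_1,z_2) = (z_1+z_2)^{p-1}[z_1 - (p-1)z_2]$ to absorb the $s$-dependence into the two arguments of $G$.
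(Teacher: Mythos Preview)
Your approach is correct and essentially coincides with the paper's: both derive $t_0 = \tfrac{2-p}{p}\,y_1\,t_2$ from the Neumann and homogeneity conditions at the endpoint $W$ on $\{y_1=y_2\}$, and both then feed in the Dirichlet data at $U$ to pin down the characteristic. The one simplification the paper exploits, which you might adopt, is that once $t_0 = (\tfrac{2}{p}-1)y_1 t_2$ is known, the characteristic line equation $[y_2+(\tfrac{2}{p}-1)y_1]\,dt_2 + y_3\,dt_3 = 0$ immediately gives $M(y) = y_3\bigl[t_3 - t_2\,\tfrac{dt_3}{dt_2}\bigr]$, so $M/y_3 = \omega^p$ is constant on each characteristic; evaluating at $U$ then yields $\omega$ directly in terms of your parameter $s$, and inverting gives $s$ (the paper's $u$) in terms of $\sqrt{\omega^2-\tau^2}$, which substituted into the straight-line equation $\tfrac{y_2+(\frac{2}{p}-1)y_1}{y_3} = \tfrac{s+(\frac{2}{p}-1)y_1}{(y_1-s)^p}$ produces the $G$-identity without having to solve and eliminate a full $3\times 3$ system.
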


This is proven through a series of Lemmas.

\begin{lemma}\label{reductionsolution1_2}
$M(y) = t_2 y_2 + t_3 y_3 + t_0$ on the characteristic $y_2 dt_2 + y_3 dt_3 + dt_0 = 0$ can be simplified to $M(y) = \left(\frac{\sqrt{(\yone + u)^2 + \tau^2 (\yone -u)^2}}{\yone -u}\right)^p\ythr,$
where $u$ is the unique solution to the equation $\frac{\ytwo + (\frac{2}{p} -1)\yone}{\ythr} = \frac{u + (\frac{2}{p} -1)\yone}{(\yone -u)^p}$ and $\frac{p-2}{p} \yone < \ytwo < \yone$
\end{lemma}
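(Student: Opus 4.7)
The plan is to use the general Monge--Amp\`ere solution on characteristics from Proposition \ref{Pogorelov} and pin down the line parameters by invoking the Dirichlet, Neumann, and homogeneity properties of $M$ from Proposition \ref{Bellmanprop}. In Case $(1_2)$, with $y_1 \geq 0$ fixed, each characteristic is a straight line in the $(y_2, y_3)$-plane running from its ``foot'' $U = (u, (y_1-u)^p)$ on the parabolic boundary $\{y_3 = (y_1-y_2)^p\}$ to a point on the Neumann edge $\{y_2 = y_1\}$, and on this line $M = t_2 y_2 + t_3 y_3 + t_0$ with $t_2$, $t_3$, $t_0$ constant.

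At the foot $U$, the Dirichlet data from Proposition \ref{Bellmanprop}(ii) fixes the value $M(U) = A(u)^{p/2}$, where $A(u) := (y_1+u)^2 + \tau^2(y_1-u)^2$. At the other endpoint, where $y_2 = y_1$, combining the Neumann condition $M_{y_1} = M_{y_2}$ with the Euler identity $y_1 M_{y_1} + y_2 M_{y_2} + p y_3 M_{y_3} = pM$ gives $2 y_1 t_2 + p y_3 t_3 = p(t_2 y_1 + t_3 y_3 + t_0)$, in which the $y_3$-terms cancel to leave the single relation
\[
t_0 \;=\; -\frac{p-2}{p}\, y_1\, t_2.
\]
Geometrically this says: the characteristic line, linearly extended to $y_3 = 0$, hits the point $\bigl(\frac{p-2}{p}\, y_1,\, 0\bigr)$, and the linear extension of $M$ vanishes there.

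Consequently the characteristic through $(y_1, y_2, y_3)$ is the unique line through the two points $\bigl(\frac{p-2}{p}\, y_1,\, 0\bigr)$ and $U = (u, (y_1-u)^p)$, and writing the collinearity of these with $(y_2, y_3)$ yields
\[
\frac{y_2 + \frac{2-p}{p}\, y_1}{y_3} \;=\; \frac{u + \frac{2-p}{p}\, y_1}{(y_1-u)^p},
\]
which is precisely the implicit equation in the lemma. Since $M$ is affine on this line and vanishes at $\bigl(\frac{p-2}{p}\, y_1,\, 0\bigr)$, the ratio $M/y_3$ is constant along the characteristic; evaluating at $U$ gives
\[
\frac{M(y)}{y_3} \;=\; \frac{A(u)^{p/2}}{(y_1-u)^p},
\]
which is the formula $M(y) = \bigl(\sqrt{A(u)}/(y_1-u)\bigr)^p y_3$ claimed in the lemma.

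Finally, for uniqueness of $u$ in the range $\frac{p-2}{p}\, y_1 < y_2 < y_1$, set $R(u) := (u + \frac{2-p}{p}\, y_1)/(y_1-u)^p$. A direct differentiation gives $R'(u) = [(p-1)u + (3-p) y_1]/(y_1-u)^{p+1}$, which is strictly positive on $u \in (\frac{p-2}{p}\, y_1, y_1)$ since $\frac{p-3}{p-1} < \frac{p-2}{p}$ (equivalent to the trivial $0 < 2$); combined with $R(\frac{p-2}{p}\, y_1) = 0$ and $R(u) \to \infty$ as $u \to y_1^-$, this shows $R$ is a bijection from $[\frac{p-2}{p}\, y_1, y_1)$ onto $[0, \infty)$. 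The main obstacle is reading the Neumann--Euler relation $t_0 = -\frac{p-2}{p}\, y_1\, t_2$ as the geometric statement that the extended $M$ vanishes at $(\frac{p-2}{p}\, y_1, 0)$; once this pivot point is identified, the implicit equation drops out as collinearity and the formula for $M$ becomes a constant-ratio argument along the line.
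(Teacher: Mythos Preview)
Your argument is correct and follows essentially the same route as the paper: derive $t_0=(\tfrac{2}{p}-1)y_1 t_2$ from the Neumann condition plus the Euler relation at the edge $y_2=y_1$, conclude that $M/y_3$ is constant along each characteristic, and read off that constant from the Dirichlet data at the foot $U$. Your geometric framing --- that all characteristics, when extended, pass through the pivot $(\tfrac{p-2}{p}y_1,0)$ and that the affine $M$ vanishes there, forcing $M/y_3$ constant --- is a clean repackaging of the paper's substitution $M=y_3[t_3-t_2\,dt_3/dt_2]$; for uniqueness the paper instead checks that $f(u)=y_3[u+(\tfrac{2}{p}-1)y_1]-(y_1-u)^p[y_2+(\tfrac{2}{p}-1)y_1]$ is monotone with a sign change, which avoids your derivative computation, but your bijectivity argument for $R$ is equally valid.
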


\begin{proof}
A characteristic in Case $(1_2)$ is from $U = (\yone, u, (\yone - u)^p)$ to $W = (\yone, \yone, w).$  Throughout the proof we will use the properties of the Bellman function from Proposition \ref{Bellmanprop}.  Using the Neumann property and the property from Proposition \ref{Pogorelov} we get $M_{y_1} = M_{y_2} = t_2$ at $W.$  By homogeneity at $W$ we get 
\beno
py_2 t_2 + pwt_3 + pt_0 = pM(W) = y_1 M_{y_1} + \ytwo M_{y_2} + p\ythr M_{y_3} = 2\yone t_2 + pwt_3.
\eeno
Then $t_0 = (\frac{2}{p}-1)\yone t_2$ and $dt_0 = (\frac{2}{p}-1)\yone dt_2$, since $y_1$ is fixed.  So $M(y) = [\ytwo + (\twrp-1)\yone]t_2 + y_3t_3$ on $[\ytwo + (\twrp-1)\yone]dt_2 + y_3dt_3 = 0$.  By substitution we get, $M(y) = y_3[t_3 -t_2 \frac{dt_3}{dt_2}]$ on characteristics.  But, $t_2, t_3, \frac{dt_3}{dt_2}$ are constant on characteristics, which gives that $\frac{M(y)}{y_3} \equiv \const$ as well.  We can calculate the value of the constant by using the Dirichlet boundary data for $M$ at $U.$  Therefore, $M(y) = \left(\frac{\sqrt{(\yone + u)^2 + \tau^2 (\yone -u)^2}}{\yone -u}\right)^p\ythr,$
where $u$ is the solution to the equation 

\be
\label{chreq}
\frac{\ytwo + (\frac{2}{p} -1)\yone}{\ythr} = \frac{u + (\frac{2}{p} -1)\yone}{(\yone -u)^p}.
\ee

Now fix $u = -(\twrp -1)\yone$.  Then we see that $\ytwo = -(\twrp -1)\yone = u$ is also fixed by (\ref{chreq}).  This means that the characteristics are limited to part of the domain, as shown in ~Figure ~\ref{Sectorfor1_2}, since they start at $U$ and end at $W \in \{y_1 = y_2\}$.
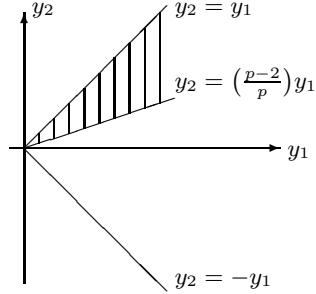
\begin{figure}[h]
\setlength{\unitlength}{0.02cm}
\begin{picture}(200,200)(0,0)
\thinlines
\put(20,10){\vector(0,1){180}} 
\put(10,100){\vector(1,0){180}} 
\put(20,100){\line(1,1){95}} 
\put(20,100){\line(1,-1){95}} 
\put(20,100){\line(3,1){100}} 
\put(30,110){\line(0,-1){6.6}} 
\put(40,120){\line(0,-1){13.3}} 
\put(50,130){\line(0,-1){20}}
\put(60,140){\line(0,-1){26.7}}
\put(70,150){\line(0,-1){33.3}}
\put(80,160){\line(0,-1){40}}
\put(90,170){\line(0,-1){46.7}}
\put(100,180){\line(0,-1){53.3}}
\put(110,190){\line(0,-1){60}}
\put(120,190){\footnotesize $y_2=y_1$}
\put(120,10){\footnotesize $y_2=-y_1$}
\put(120,140){\footnotesize $y_2=\big(\frac{p-2}{p}\big)y_1$}
\put(150,80){\footnotesize $$}
\put(25,190){\footnotesize $y_2$}
\put(195,95){\footnotesize $y_1$}
\end{picture}
\caption{Sector for characteristics in Case $(1_2),$ when $p>2.$}
\label{Sectorfor1_2}
\end{figure}
All that remains is verifying the equation (\ref{chreq}) has exactly one solution $u = u(\yone, \ytwo,\ythr)$ in the sector $\frac{p-2}{p} \yone < \ytwo < \yone$. Indeed, the function
\begin{equation*}
f(u):=y_3\left[u+\left(\frac2p-1\right)y_1\right]-(y_1-u)^p\left[y_2+\left(\frac2p-1\right)y_1\right]
\end{equation*}
is monotone increasing for $u<y_1$, 
$f(-(\frac2p -1)y_1) = -\big(\frac2p y_1)^p\Big[y_2+\big(\frac2p-1\big)y_1\Big] < 0$ and 
$f(y_1) = \frac2p y_1y_3 > 0.$  Therefore, we do get a unique solution, $u,$ in the sector.\qedhere
\end{proof}

\begin{lemma}
$M(y) = \left(\frac{\sqrt{(\yone + u)^2 + \tau^2 (\yone -u)^2}}{\yone -u}\right)^p\ythr$  can be rewritten as $G(\yone +\ytwo, \yone -\ytwo) = \ythr G(\sqrt{\om^2 -\tau^2}, 1)$ for $\frac{p-2}{p} \yone < \ytwo < \yone.$ 
\end{lemma}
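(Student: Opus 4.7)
The plan is to start from the explicit form of $M(y)$ in Lemma~3.3 and algebraically transform it into the implicit $G$-relation by introducing $\sqrt{\om^2-\tau^2}$ as a convenient intermediate quantity, and then eliminating the characteristic parameter $u$ via equation (3.2).

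First I would unpack $\om$. By definition $\om^p = M(y)/y_3$, so Lemma~3.3 gives directly
\beno
\om = \frac{\sqrt{(\yone+u)^2 + \tau^2(\yone-u)^2}}{\yone-u}.
\eeno
Squaring and subtracting $\tau^2$ kills the cross term and leaves $\om^2 - \tau^2 = \frac{(\yone+u)^2}{(\yone-u)^2}$; since on the relevant sector $\yone > u$ and (for $\ytwo > \frac{p-2}{p}\yone$) one can check that $u > -\yone$, one takes the positive root to obtain the clean identity
\beno
\sqrt{\om^2-\tau^2} \;=\; \frac{\yone+u}{\yone-u}.
\eeno
This is the crucial substitution.

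Next I would compute $G(\sqrt{\om^2-\tau^2},1)$ by plugging in this expression and clearing the common denominator $\yone-u$. A short calculation gives
\beno
G\!\left(\sqrt{\om^2-\tau^2},1\right) = \frac{(2\yone)^{p-1}\bigl[(2-p)\yone + pu\bigr]}{(\yone-u)^p},
\eeno
which after multiplication by $\ythr$ becomes
\beno
\ythr\, G\!\left(\sqrt{\om^2-\tau^2},1\right) = \frac{(2\yone)^{p-1} \cdot p\bigl[u + \bigl(\tfrac{2}{p}-1\bigr)\yone\bigr]\,\ythr}{(\yone-u)^p}.
\eeno
Now the characteristic equation (3.2) from Lemma~3.3 reads precisely
\beno
\ythr\bigl[u+\bigl(\tfrac{2}{p}-1\bigr)\yone\bigr] = (\yone-u)^p\bigl[\ytwo+\bigl(\tfrac{2}{p}-1\bigr)\yone\bigr],
\eeno
so the $(\yone-u)^p$ in the denominator cancels and we are left with $(2\yone)^{p-1}\bigl[p\ytwo + (2-p)\yone\bigr]$.

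Finally I would recognize this as $G(\yone+\ytwo,\yone-\ytwo)$. Writing $2\yone = (\yone+\ytwo)+(\yone-\ytwo)$ and $p\ytwo + (2-p)\yone = (\yone+\ytwo) - (p-1)(\yone-\ytwo)$ gives exactly
\beno
(2\yone)^{p-1}\bigl[(\yone+\ytwo) - (p-1)(\yone-\ytwo)\bigr] = G(\yone+\ytwo,\yone-\ytwo),
\eeno
completing the identification $G(\yone+\ytwo,\yone-\ytwo) = \ythr\,G(\sqrt{\om^2-\tau^2},1)$. I do not expect any serious obstacle here: the computation is elementary linear algebra in the radicals plus the single crucial input from the characteristic equation. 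The only genuinely delicate step is justifying the choice of sign when extracting $\sqrt{\om^2-\tau^2}$, which uses the fact that on the sector $\frac{p-2}{p}\yone < \ytwo < \yone$ the unique root $u$ of (3.2) supplied by Lemma~3.3 satisfies $u > -\yone$ (this follows from the monotonicity/boundary-value analysis already carried out there, since $f(-\yone) = -2\yone\ythr < 0$ while $f$ is increasing to $f(\yone)>0$ only once it passes $u = -(\tfrac{2}{p}-1)\yone$).
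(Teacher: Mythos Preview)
Your proof is correct and follows essentially the same route as the paper: extract $\sqrt{\om^2-\tau^2} = (\yone+u)/(\yone-u)$ from the explicit form of $M$, then eliminate $u$ via the characteristic equation (3.2) to land on the $G$-identity. The paper organizes the algebra by first inverting to write $u = \frac{\sqrt{\om^2-\tau^2}-1}{\sqrt{\om^2-\tau^2}+1}\yone$ and then substituting into (3.2), whereas you compute $G(\sqrt{\om^2-\tau^2},1)$ directly and then apply (3.2); these are the same calculation read in opposite directions. One small slip: your parenthetical value $f(-\yone) = -2\yone\ythr$ is not right (the first bracket is $\tfrac{2-2p}{p}\yone$, not $-2\yone$, and there is a second negative term), but the conclusion $u>-\yone$ is already immediate from Lemma~3.3, which pins $u$ in the interval $\bigl(\tfrac{p-2}{p}\yone,\yone\bigr)\subset(-\yone,\yone)$.
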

\begin{proof}
$\om = \left(\frac{M(y)}{\ythr}\right)^{\frac1p} = \frac{\sqrt{(\yone + u)^2 + \tau^2 (\yone -u)^2}}{\yone -u} \geq |\tau|\frac{|\yone - u|}{\yone - u} = |\tau|$

Since $ \yone \pm u \geq 0$ and $\om^2 - \tau^2 \geq 0$, then $u = \frac{\sqrt{\om^2 - \tau^2}-1}{\sqrt{\om^2 - \tau^2}+1}\yone$ by inversion.  Substituting this into $\frac{\ytwo + (\frac{2}{p} -1)\yone}{\ythr} = \frac{u + (\frac{2}{p} -1)\yone}{(\yone -u)^p}$ gives
\beno
2^{p-1}\yone^{p-1}[p\ytwo-(p-2)\yone] = \ythr(\sqrt{\om^2-\tau^2}+1)^{p-1}[\sqrt{\om^2-\tau^2}-(p-1)]
\eeno
or $(x_1 + x_2)^{p-1}[x_2-(p-1)x_1] = \left[\sqrt{B^{\frac2p} - \left(\tau x_3^{\frac1p}\right)^2}+x_3^\rp\right]^{p-1}\left[\sqrt{B^{\frac2p} - \left(\tau x_3^{\frac1p}\right)^2}-(p-1)x_3^\rp\right].$ Thus, $G(x_2, x_1) = G\left(\sqrt{B^{\frac2p} - \left(\tau x_3^{\frac1p}\right)^2}, x_3^\rp\right)$ or $G(\yone + \ytwo, \yone - \ytwo) = \ythr G(\sqrt{\om^2 - \tau^2}, 1).\qedhere$

\end{proof}
This proves Proposition \ref{solution1_2}.  We have constructed a partial Bellman function candidate from the Monge--Amp\`ere solution in Case$(1_2),$ so $\yone$ no longer needs to be fixed.  All of the properties of the Bellman function were used to derive this partial Bellman candidate, but the restrictive concavity from Proposition \ref{D_i} still needs to be verified.  To verify restrictive concavity, we need that $M_{y_2 y_2} \leq 0, M_{y_3 y_3} \leq 0, D_2 \geq 0$ and $M_{y_1 y_1} \leq 0, D_1 \geq 0.$  By assumption $D_2 = 0,$ so we needn't worry about that estimate.  The remaining estimates will be verified in a series of Lemmas.  The first Lemma is an idea taken from Burkholder \cite{Burkholder} to make the calculations for computing mixed partials shorter.  In the Lemma, we compute the partials of arbitrary functions which we will choose specifically later, although it is not hard to see what the appropriate choices should be.

\begin{lemma} \label{partialsofM}
Let $H = H(y_1, y_2), \Phi(\om) = \frac{H(y_1, y_2)}{y_3}, R_1=R_1(\omega):=\frac1{\Phi'}$ and  $R_2=R_2(\omega):= R'_1=-\frac{\Phi''}{{\Phi'}^2}.$  Then 

$M_{y_{_3}y_{_3}} = \frac{p\omega^{p-2}R_1H^2}{y_3^3}[\omega R_2+(p-1)R_1]$

$M_{y_{_3}y_{_i}} = -\frac{p\omega^{p-2}R_1HH'}{y_3^2}[\omega R_2+(p-1)R_1]$

$M_{y_{_i}y_{_i}} = \frac{p\omega^{p-2}R_1}{y_3^{\phantom3}}\left([\omega R_2+(p-1)R_1](H')^2+\omega y_3H''\right)$ 

$D_i=M_{y_{_3}y_{_3}}M_{y_{_i}y_{_i}}-M_{y_{_3}y_{_i}}^2=
\frac{p^2\omega^{2p-3}R_1^2H^2H''}{y_3^3}[\omega R_2+(p-1)R_1].$
\end{lemma}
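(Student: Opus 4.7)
The plan is to reduce everything to derivatives of $\omega$, which we extract from the implicit relation $\Phi(\omega(y))=H(y_1,y_2)/y_3$, and then differentiate $M(y)=\omega(y)^p\,y_3$ twice by the chain rule, letting $R_1=1/\Phi'$ and $R_2=R_1'=-\Phi''/(\Phi')^2$ absorb the analytic dependence on $\omega$. The computation is entirely mechanical; the only content is an algebraic cancellation in $D_i$ that produces the clean form on the last line.

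First I would implicitly differentiate $\Phi(\omega)=H/y_3$ to obtain
\[
\omega_{y_3}=-\frac{H R_1}{y_3^{2}},\qquad \omega_{y_i}=\frac{H' R_1}{y_3},
\]
where $H'=H_{y_i}$. With these in hand the first derivatives of $M=\omega^p y_3$ follow immediately: $M_{y_3}=\omega^p-p\omega^{p-1}H R_1/y_3$ and $M_{y_i}=p\omega^{p-1}H' R_1$.

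Next I would differentiate each of these once more, recording early the key identity
\[
\partial_\omega\bigl[\omega^{p-1}R_1\bigr]=\omega^{p-2}\bigl[(p-1)R_1+\omega R_2\bigr],
\]
and applying it through the chain rule with $\omega_{y_3}$ and $\omega_{y_i}$ as above. Two of the three second derivatives come out directly; in $M_{y_3 y_3}$ the leftover contributions $p\omega^{p-1}HR_1/y_3^{2}$ and $p\omega^{p-1}\omega_{y_3}=-p\omega^{p-1}HR_1/y_3^{2}$ cancel exactly, which is what produces the compact expression. In $M_{y_3 y_i}$ a similar pair of $H'\omega^{p-1}R_1/y_3$ terms cancel. $M_{y_iy_i}$ requires no cancellation, only the product rule for $H(y_1,y_2)$.

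Finally I would form $D_i=M_{y_3y_3}M_{y_iy_i}-M_{y_3y_i}^{2}$. The common prefactor $p^2\omega^{2p-4}R_1^{2}H^{2}/y_3^{4}$ and the common factor $[(p-1)R_1+\omega R_2]$ pull out of both terms; what is left inside the bracket in the first product is $[(p-1)R_1+\omega R_2](H')^{2}+\omega y_3 H''$, while the square $M_{y_3y_i}^2$ contributes exactly $[(p-1)R_1+\omega R_2](H')^{2}$ after the factorization. These $(H')^2$ pieces cancel, only $\omega y_3 H''$ survives, and restoring the prefactor gives the stated formula
\[
D_i=\frac{p^{2}\omega^{2p-3}R_1^{2}H^{2}H''}{y_3^{3}}\bigl[\omega R_2+(p-1)R_1\bigr].
\]
No genuine obstacle is expected; the whole lemma is bookkeeping, and the point of stating it in this generality is that once $H$ and $\Phi$ are chosen in the next step, the second-derivative tests in Proposition~\ref{D_i} will reduce to inspecting the signs of $R_1$, $H''$, and $\omega R_2+(p-1)R_1$ one factor at a time.
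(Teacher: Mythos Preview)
Your proposal is correct and follows essentially the same approach as the paper: both compute $\omega_{y_3}$ and $\omega_{y_i}$ by implicit differentiation of $\Phi(\omega)=H/y_3$, then differentiate $M=y_3\omega^p$ twice by the chain rule and simplify. The only organizational difference is that the paper first writes out the second partials $\omega_{y_3y_3},\omega_{y_3y_i},\omega_{y_iy_i}$ explicitly and then plugs into $M_{y_jy_k}=py_3\omega^{p-1}\omega_{y_jy_k}+\cdots$, whereas you differentiate the expressions for $M_{y_3}$ and $M_{y_i}$ directly using the identity $\partial_\omega[\omega^{p-1}R_1]=\omega^{p-2}[(p-1)R_1+\omega R_2]$; the cancellations you flag are exactly the ones that occur, and the computation of $D_i$ is identical.
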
 

\begin{proof}
First of all we calculate the partial derivatives of $\omega$:
\begin{align*}
\Phi'\omega_{y_{_3}}=-\frac H{y_3^2}\quad
&\implies\quad\omega_{y_{_3}}=-\frac{R_1H}{y_3^2}\,,
\\
\Phi'\omega_{y_{_i}}=\frac{H_{y_{_i}}}{y_3}\quad
&\implies\quad\omega_{y_{_i}}=\frac{R_1H_{y_{_i}}}{y_3}=\frac{R_1H'}{y_3},\qquad
i=1,2\,.
\end{align*}
Here and further we shall use notation $H'$ for any partial derivative
$H_{y_i}$, $i=1,2$. This cannot cause any confusion since only one $i \in \{1,2\}$
participate in calculation of $D_i$. 
\begin{align*}
\omega_{y_{_3}y_{_3}}&=-\frac{R_2\omega_{y_{_3}}H}{y_3^2}+2\frac{R_1H}{y_3^3}
=\frac{R_1H}{y_3^4}(R_2H+2y_3)\,,
\\
\omega_{y_{_3}y_{_i}}&=-\frac{R_2\omega_{y_{_i}}H}{y_3^2}-\frac{R_1H'}{y_3^2}
=-\frac{R_1H'}{y_3^3}(R_2H+y_3)\,,
\\
\omega_{y_{_i}y_{_i}}&=\frac{R_2\omega_{y_{_i}}H}{y_3}+\frac{R_1H'}{y_3}
=\frac{R_1}{y_3^2}(R_2(H')^2+y_3H'')\,.
\end{align*}

Now we pass to the calculation of derivatives of $M=y_3\omega^p$:
\begin{align*}
M_{y_{_3}}&=py_3\omega^{p-1}\omega_{y_{_3}}+\omega^p\,,
\\
M_{y_{_i}}&=py_3\omega^{p-1}\omega_{y_{_i}}\,;
\end{align*}
\begin{align}
M_{y_{_3}y_{_3}}&=py_3\omega^{p-1}\omega_{y_{_3}y_{_3}}+
2p\omega^{p-1}\omega_{y_{_3}}+p(p-1)y_3\omega^{p-2}\omega_{y_{_3}}^2
\notag
\\
\label{M33}
&=\frac{p\omega^{p-2}R_1H^2}{y_3^3}[\omega R_2+(p-1)R_1]\,,
\\
\notag
M_{y_{_3}y_{_i}}&=py_3\omega^{p-1}\omega_{y_{_3}y_{_i}}+
p\omega^{p-1}\omega_{y_{_i}}+p(p-1)y_3\omega^{p-2}\omega_{y_{_3}}\omega_{y_{_i}}
\\
\notag
&=-\frac{p\omega^{p-2}R_1HH'}{y_3^2}[\omega R_2+(p-1)R_1]\,,
\\
\notag
M_{y_{_i}y_{_i}}&=py_3\omega^{p-1}\omega_{y_{_i}y_{_i}}
+p(p-1)y_3\omega^{p-2}\omega_{y_{_i}}^2
\\
\label{Mii}
&=\frac{p\omega^{p-2}R_1}{y_3^{\phantom3}}\left([\omega
R_2+(p-1)R_1](H')^2+\omega y_3H''\right)\,.
\end{align}
This yields
\begin{equation}
\label{Di} D_i=M_{y_{_3}y_{_3}}M_{y_{_i}y_{_i}}-M_{y_{_3}y_{_i}}^2=
\frac{p^2\omega^{2p-3}R_1^2H^2H''}{y_3^3}[\omega R_2+(p-1)R_1]\,.\qedhere
\end{equation}
\end{proof}
 
\begin{lemma}\label{H''1_2}
If $\al_i, \beta_i \in \{\pm 1\}$ and $H(\yone, \ytwo) = G(\al_1 \yone +\al_2 \ytwo, \beta_1 \yone +\beta_2 \ytwo)$ then  \begin{displaymath}
   H'' = \left\{
     \begin{array}{lr}
       4G_{z_1 z_2}, \al_j = \beta_j\\
       0, \al_j = -\beta_j.
     \end{array}
   \right.
\end{displaymath} 
Consequently, in Case $(1_2),$ $\sign H'' = -\sign (p-2).$
\end{lemma}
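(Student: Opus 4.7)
The plan is to reduce everything to a single identity satisfied by $G$ itself, and then read off the two cases. Setting $z_1 = \al_1 y_1 + \al_2 y_2$ and $z_2 = \beta_1 y_1 + \beta_2 y_2$, the chain rule gives $H_{y_j} = \al_j G_{z_1} + \beta_j G_{z_2}$ and, because $\al_j,\beta_j \in \{\pm 1\}$,
\[
H_{y_j y_j} = \al_j^2 G_{z_1 z_1} + 2\al_j\beta_j G_{z_1 z_2} + \beta_j^2 G_{z_2 z_2} = G_{z_1 z_1} + 2\al_j\beta_j G_{z_1 z_2} + G_{z_2 z_2}.
\]
Thus both assertions will follow at once from the single identity
\[
(\partial_{z_1} - \partial_{z_2})^2 G \;=\; G_{z_1 z_1} - 2 G_{z_1 z_2} + G_{z_2 z_2} \;=\; 0,
\]
since then $H_{y_j y_j} = 4 G_{z_1 z_2}$ when $\al_j\beta_j = 1$ and $H_{y_j y_j} = 0$ when $\al_j\beta_j = -1$.

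To prove this PDE I will rewrite $G$ in a form that manifestly lies in the kernel of $(\partial_{z_1} - \partial_{z_2})^2$. Substituting $z_1 - (p-1)z_2 = \bigl(1-\frac{p}{2}\bigr)(z_1+z_2) + \frac{p}{2}(z_1-z_2)$ into the definition of $G$ yields
\[
G(z_1, z_2) = \bigl(1 - \tfrac{p}{2}\bigr)(z_1 + z_2)^p + \tfrac{p}{2}(z_1 - z_2)(z_1 + z_2)^{p-1} = A(z_1+z_2) + (z_1 - z_2)\,B(z_1 + z_2).
\]
The operator $\partial_{z_1} - \partial_{z_2}$ kills any function of $z_1 + z_2$, so it annihilates $A(z_1+z_2)$ outright; and since $(\partial_{z_1}-\partial_{z_2})\bigl[(z_1-z_2)B(z_1+z_2)\bigr] = 2B(z_1+z_2)$, the second application kills this piece as well, which is the claimed identity. (Alternatively, one may verify it by direct computation of the three second derivatives of $G$.)

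For the consequence in Case $(1_2)$, the implicit relation of Proposition \ref{solution1_2} identifies $H(y_1,y_2) = G(y_1+y_2,\, y_1-y_2)$, so $\al_1 = \al_2 = \beta_1 = 1$ and $\beta_2 = -1$. The vanishing $H_{y_2 y_2} = 0$ (case $\al_2\beta_2 = -1$) is consistent with the standing Assumption \ref{degenerateassumption} giving $D_2 = 0$, so the nontrivial concavity check is $D_1 \geq 0$ and the relevant $H''$ in Lemma \ref{partialsofM} is $H_{y_1 y_1}$ (case $\al_1\beta_1 = 1$). By the first part, $H_{y_1 y_1} = 4 G_{z_1 z_2}(y_1+y_2,\, y_1-y_2)$. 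A short direct computation, say from $G = s^p - p z_2 s^{p-1}$ with $s = z_1+z_2$, gives $G_{z_1 z_2} = -p(p-1)(p-2)\, s^{p-3}\, z_2$, whence
\[
H_{y_1 y_1} = -4 p(p-1)(p-2)\,(2y_1)^{p-3}\,(y_1 - y_2).
\]
In the sector $\tfrac{p-2}{p} y_1 < y_2 < y_1$ associated to Case $(1_2)$ we have $y_1 > 0$ and $y_1 - y_2 > 0$, so the remaining factor is positive and $\sign H'' = -\sign(p-2)$. The only genuine content of the argument is spotting the degenerate wave-equation structure of $G$; everything else is chain rule and sign bookkeeping, so this is where I would focus the exposition.
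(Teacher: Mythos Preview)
Your proof is correct and follows the same overall arc as the paper: chain-rule reduction of $H_{y_jy_j}$ to $G_{z_1z_1}+G_{z_2z_2}\pm 2G_{z_1z_2}$, then the identity $G_{z_1z_1}+G_{z_2z_2}=2G_{z_1z_2}$, then specialization to $H(y_1,y_2)=G(y_1+y_2,y_1-y_2)$ with the sign read off from $G_{z_1z_2}=-p(p-1)(p-2)(2y_1)^{p-3}(y_1-y_2)$.

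The one genuine difference is how you establish the identity. The paper simply computes $G_{z_1z_1}$, $G_{z_1z_2}$, $G_{z_2z_2}$ by hand and observes the relation. You instead rewrite $G$ as $A(z_1+z_2)+(z_1-z_2)B(z_1+z_2)$ and note that any such function is annihilated by $(\partial_{z_1}-\partial_{z_2})^2$; this is cleaner and explains \emph{why} the identity holds rather than merely verifying it. Your alternative route to $G_{z_1z_2}$ via $G=s^p-pz_2s^{p-1}$ is likewise a bit quicker than the paper's. These are presentational improvements, not a different strategy, and your explicit remark that $H_{y_2y_2}=0$ is consistent with $D_2=0$ is a nice sanity check the paper leaves implicit.
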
 

\begin{proof}
\begin{align*}
H''&=\frac{\partial^2}{\partial y_i^2}
G(\alpha_1y_1+\alpha_2y_2,\beta_1y_1+\beta_2y_2)
\\
&=\alpha_i^2G_{\!z_{_1}\!z_{_1}}\!\!+2\alpha_i\beta_iG_{\!z_{_1}\!z_{_2}}
\!\!+\beta_i^2G_{\!z_{_2}\!z_{_2}}
\\
&=G_{\!z_{_1}\!z_{_1}}\!\!+G_{\!z_{_2}\!z_{_2}}\!\!\pm2G_{\!z_{_1}\!z_{_2}}\,,
\end{align*}
where the ``$+$'' sign has to be taken if the coefficients in front of $y_i$
are equal and the ``$-$'' sign in the opposite case.

The derivatives of $G$ are simple:
\begin{align*}
G_{\!z_1}\!&=p(z_1+z_2)^{p-2}\big[z_1-(p-2)z_2\big]\,,
\\
G_{\!z_2}\!&=-p(p-1)z_2(z_1+z_2)^{p-2}\,;
\end{align*}
\begin{align*}
G_{\!z_{_1}\!z_{_2}}\!&=p(p-1)(z_1+z_2)^{p-3}\big[z_1-(p-3)z_2\big]\,,
\\
G_{\!z_{_1}\!z_{_2}}\!&=-p(p-1)(p-2)z_2(z_1+z_2)^{p-3}\,,
\\
G_{\!z_{_2}\!z_{_2}}\!&=-p(p-1)(z_1+z_2)^{p-3}\big[z_1+(p-1)z_2\big]\,.
\end{align*}

Note that
$G_{\!z_{_1}\!z_{_1}}\!\!+G_{\!z_{_2}\!z_{_2}}\!\!=2G_{\!z_{_1}\!z_{_2}}$, and
therefore, \begin{displaymath}
   H'' = \left\{
     \begin{array}{lr}
       4G_{z_1 z_2}, \al_j = \beta_j\\
       0, \al_j = -\beta_j
     \end{array}
   \right.
\end{displaymath} 

Now in Case $(1_2)$, we must choose $\alpha_1 = 1, \alpha_2 = 1, \beta_1 = 1$ and $\beta_2 = -1$ for $H$ to match how the implicit solution was defined in terms of $G$ in Proposition \ref{solution1_2}.  Then $G_{\!z_{_1}\!z_{_2}}=-p(p-1)(p-2)(y_1-y_2)(2y_1)^{p-3}.$ \qedhere
\end{proof}

\begin{remark} \label{lowerbndbeta}
Let $\beta := \sbeta$ from this point on.  In Case $(1_2), \beta > p-1$ in the sector $\frac{p-2}{p}\yone < \ytwo < \yone.$  Equivalently, $B > (\tau^2+(p-1)^2)^{\frac p2}$ in $\frac{p-2}{p}\yone < \ytwo < \yone.$
\end{remark}
This is an easy application of Proposition \ref{solution1_2}:
\beno
(\beta + 1)^{p-1}[\beta - p+1] = G(\beta, 1) & = & \frac{1}{\ythr}G(\yone+\ytwo, \yone-\ytwo)\\ 
& = & (2y_1)^{p-1}[-(p-2)y_1 + py_2]>0.\\
\eeno
Before we can compute the signs of $M_{\yone \yone}, M_{\ytwo \ytwo}, M_{\ythr \ythr}$ and $D_1$ we need a technical Lemma.
\begin{lemma}\label{gtaup}
If $1 < p < \infty$ and $\tau \in \R,$ then
\beno
g(\beta) := - p(p-2) \om \beta^{-3} (\beta + 1)^{p-3}[(\tau^2 + p-1)\beta^2 - \tau^2(p-3)\beta + \tau^2]
\eeno
satisfies $\sign g(\beta) = - \sign (p-2)$ in Case $(1_2).$
\end{lemma}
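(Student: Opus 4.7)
The plan is to factor $g(\beta)$ as a clearly signed prefactor times a quadratic polynomial $Q(\beta)$, and then to verify that $Q$ is strictly positive throughout the sector prescribed by Case $(1_2)$. Writing
$$g(\beta) = \bigl[-p(p-2)\omega\beta^{-3}(\beta+1)^{p-3}\bigr]\,Q(\beta), \qquad Q(\beta) := (\tau^2+p-1)\beta^2 - \tau^2(p-3)\beta + \tau^2,$$
the prefactor has sign $-\sign(p-2)$: indeed $\omega>0$ because $\omega=(M/y_3)^{1/p}$ with $M>0$, and by Remark~\ref{lowerbndbeta}, $\beta>p-1>0$, so $\beta^{-3}$ and $(\beta+1)^{p-3}$ are both positive. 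It therefore suffices to show that $Q(\beta)>0$ whenever $\beta>p-1$.

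To do this, note that $\tau^2+p-1>0$, so $Q$ is an upward-opening parabola with vertex at
$$\beta^* = \frac{\tau^2(p-3)}{2(\tau^2+p-1)}.$$
I would first verify that $\beta^* \leq p-1$ in all cases: after clearing denominators this reduces to $-\tau^2(p+1) \leq 2(p-1)^2$, which is trivially true since the left side is non-positive and the right side is non-negative. Consequently $Q$ is non-decreasing on the relevant interval $[p-1,\infty)$, reducing the claim to the single numerical inequality $Q(p-1)>0$.

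The final step is a direct expansion:
$$Q(p-1) = (\tau^2+p-1)(p-1)^2 - \tau^2(p-3)(p-1) + \tau^2 = (p-1)^3 + \tau^2(2p-1),$$
which is strictly positive for every $p>1$ and every $\tau \in \R$. Combined with the monotonicity conclusion, this gives $Q(\beta) \geq Q(p-1) > 0$ throughout the sector, and the sign of $g(\beta)$ is then exactly the sign of the prefactor, namely $-\sign(p-2)$.

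The main obstacle in this plan is the step verifying $\beta^* \leq p-1$ uniformly in $\tau$: a priori one might fear that for large $|\tau|$ the vertex drifts to the right of $p-1$, spoiling monotonicity on $[p-1,\infty)$ and forcing a case split by the sign of $p-3$ or the size of $|\tau|$. The clean rearrangement to $-\tau^2(p+1) \leq 2(p-1)^2$ is what rescues uniformity; once this is in hand, the remaining work is routine algebra.
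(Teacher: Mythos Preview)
Your argument is correct and in fact cleaner than the paper's. The paper proves positivity of the quadratic $q(\beta)$ by computing its discriminant $\tau^2(p-1)[\tau^2(p-5)-4]$ and splitting into cases: for $p\le 5$ (or $p>5$ with $\tau^2(p-5)<4$) the discriminant is negative and $q>0$ automatically; for $p>5$ with $\tau^2(p-5)\ge 4$ the paper locates the larger root $\beta_2$ and proves, via a separate chain of equivalences, that $\beta_2<p-1$, so that $\beta>p-1$ forces $q(\beta)>0$. Your route via the vertex $\beta^*=\tau^2(p-3)/\bigl(2(\tau^2+p-1)\bigr)$ and the one-line inequality $-\tau^2(p+1)\le 2(p-1)^2$ collapses all of this case analysis into a single monotonicity statement on $[p-1,\infty)$, after which the direct evaluation $Q(p-1)=(p-1)^3+\tau^2(2p-1)>0$ finishes the job uniformly in $\tau$ and $p$. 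The paper's discriminant approach has the minor advantage of revealing exactly when $Q$ has real roots, but for the lemma as stated your argument is shorter and avoids the somewhat delicate verification that $p-1>\beta_2$.
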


\begin{proof}
The only terms controlling the sign in $g$ are $(p-2)$ and the quadratic part, which we will denote $q(\beta)$.  So we need to simply figure out the sign of $q.$  The discriminant of $q$ is $\tau^2(p-1)[\tau^2(p-5)-4].$  

If $p \leq 5$ then the discriminant of $q$ is negative and so $q(\beta)>0.$  If $p>5,$ and $\tau^2(p-5)-4<0$ then $q(\beta)>0$ once again.  

The only case left to consider is if $p>5$ and $\tau^2(p-5)-4 \geq 0.$  The zeros of $q$ are given by $\beta = \frac{\tau^2(p-3) \pm |\tau| \sqrt{p-1} \sqrt{\tau^2(p-5) -4}}{2(\tau^2+p-1)}.$  Let $\beta_1, \beta_2$ be the zeros such that $\beta_2 \geq \beta_1.$ We claim that $\max\{p-1, \beta_2\} = p-1.$  Indeed, $p-1 - \beta_2 > 0$ 
\beno
& \Longleftrightarrow & (p+1)\tau^2 + 2(p-1)^2 > |\tau| \sqrt{p-1} \sqrt{\tau^2(p-5) -4}\\
& \Longleftrightarrow & 4(p-1)^4+4\tau^2(p+1)(p-1)^2+\tau^4(p+1)^2>\tau^2(p-1)(\tau^2(p-5)-4)\\
& \Longleftrightarrow & (p-1)^4+\tau^2p^2(p-1)+\tau^4(2p-1)>0,
\eeno
which is obviously true for all $\tau \in \R.$  Now that we have proven the claim, recall that $\beta > p-1,$ as shown in Remark \ref{lowerbndbeta}.  Therefore, $\beta > \beta_2,$ so $q(\beta) > 0$ in this case.  \qedhere
\end{proof}    

\begin{lemma}\label{D1whenpgreaterthan2}
$D_1 > 0$ in Case $(1_2)$ for all $\tau \in \R.$
\end{lemma}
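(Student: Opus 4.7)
The plan is to apply the general formula from Lemma \ref{partialsofM} with the Case $(1_2)$ data $H(y_1,y_2) = G(y_1+y_2, y_1-y_2)$ and $\Phi(\omega) = G(\sqrt{\omega^2-\tau^2}, 1)$ read off from the implicit relation in Proposition \ref{solution1_2}. The identity
$$D_1 = \frac{p^2\,\omega^{2p-3}\,R_1^2\,H^2\,H''}{y_3^3}\bigl[\omega R_2 + (p-1)R_1\bigr]$$
has many obviously positive factors. The only one requiring a moment's check is $R_1 = 1/\Phi' > 0$: a direct differentiation, using $\beta := \sqrt{\omega^2-\tau^2}$ and $d\beta/d\omega = \omega/\beta$, yields $\Phi'(\omega) = \frac{p\omega}{\beta}(\beta+1)^{p-2}(\beta-(p-2))$, which is strictly positive since $\beta > p-1 > p-2$ by Remark \ref{lowerbndbeta}. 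The whole argument therefore reduces to showing that the product $H'' \cdot [\omega R_2 + (p-1)R_1]$ is positive.

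For the first factor, I invoke Lemma \ref{H''1_2} with $i = 1$: since the coefficients of $y_1$ in both arguments of $G(y_1+y_2, y_1-y_2)$ equal $+1$, the lemma gives $H'' = 4G_{z_1 z_2} = -4p(p-1)(p-2)(y_1-y_2)(2y_1)^{p-3}$. In the sector $\frac{p-2}{p}y_1 < y_2 < y_1$ we have $y_1 - y_2 > 0$ and $y_1 > 0$, so $\sign H'' = -\sign(p-2)$. For the second factor, $R_2 = -\Phi''/(\Phi')^2$ gives $\omega R_2 + (p-1)R_1 = \bigl[(p-1)\Phi' - \omega\Phi''\bigr]/(\Phi')^2$, so its sign is that of the numerator. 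The strategy is to differentiate $\Phi$ once more in $\omega$, then exploit $\omega^2 = \beta^2 + \tau^2$ to match the resulting expression against
$$g(\beta) = -p(p-2)\omega\beta^{-3}(\beta+1)^{p-3}\bigl[(\tau^2+p-1)\beta^2 - \tau^2(p-3)\beta + \tau^2\bigr].$$
Concretely, I expect to factor out $p(p-2)(\beta+1)^{p-3}/\beta^3$ from $(p-1)\Phi' - \omega\Phi''$ and verify that the remaining quadratic in $\beta$ matches term by term. Once this algebraic identification is carried out, Lemma \ref{gtaup} supplies $\sign[\omega R_2 + (p-1)R_1] = \sign g(\beta) = -\sign(p-2)$.

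Combining the two signs, $H'' \cdot [\omega R_2 + (p-1)R_1]$ carries sign $(-\sign(p-2))^2$, which is strictly positive for every $p \neq 2$; the degenerate value $p=2$ is already covered by the explicit formula in the $p=2$ proposition. Hence $D_1 > 0$ throughout Case $(1_2)$ for all $\tau \in \R$. The only nonroutine step is the algebraic identification of $(p-1)\Phi' - \omega\Phi''$ with $g(\beta)$, and this is exactly where the main technical effort lies: the computation is mechanical but must be carried out carefully, because the coefficients combine terms with factors $(p-2)$, $(p-3)$, and $\tau^2$ that only collapse into the neat quadratic in Lemma \ref{gtaup} after the substitution $\omega^2 = \beta^2 + \tau^2$ is fully used.
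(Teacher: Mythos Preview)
Your proposal is correct and follows essentially the same route as the paper: the paper likewise introduces $\Lambda := (p-1)\Phi' - \omega\Phi''$, carries out the explicit differentiation of $\Phi(\omega) = G(\beta,1)$ with $\beta = \sqrt{\omega^2-\tau^2}$, and simplifies $\Lambda$ to exactly the expression $g(\beta)$ of Lemma~\ref{gtaup}, then combines $\sign\Lambda = -\sign(p-2)$ with $\sign H'' = -\sign(p-2)$ from Lemma~\ref{H''1_2} via formula~(\ref{Di}) to conclude. Your observation that $R_1 = 1/\Phi' > 0$ follows from $\beta > p-1$ (Remark~\ref{lowerbndbeta}) matches the paper's implicit use of the same fact, and the ``mechanical but careful'' algebraic identification you anticipate is precisely the multi-line computation the paper displays.
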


\begin{proof}
We use the partial derivatives of $G$ computed in the proof of Lemma \ref{H''1_2} to make the computations of $\Phi'$ and $\Phi''$ easier.
\be 
\Phi(\om) & = & G(\beta, 1)\notag \\ \label{Phiofomega}
\Phi'(\om) & = & p \om [\beta + 1]^{p-2}[1-(p-2)\beta^{-1}] \\ \notag 
\Phi''(\om) & = & p (\beta + 1)^{p-2}[1-(p-2)\beta^{-1}] + p(p-2)\om^2 \beta^{-1}[\beta + 1]^{p-3}[1-(p-2)\beta^{-1}]\\\notag
& \,\,& \,\,\,\,\,\,\,\,\,\, + p(p-2)\om^2 \beta^{-3}[\beta - 1]^{p-2}\\\notag
\Lambda & = & (p-1)\Phi' - \om \Phi''\\\notag
& = & p (p-2) \om \beta^{-1}(\beta + 1)^{p-2}[1-(p-2)\beta^{-1}]\\ \notag
& \,\,& \,\,\,\,\,\,\,\,\,\, -  p (p-2) \om^3 \beta^{-3} (\beta + 1)^{p-3}[\beta(\beta -p+2)+\beta+1]\\\notag
& = & p (p-2) \om \beta^{-2} (\beta + 1)^{p-3}[\beta - p +2]\{\beta(\beta + 1) - \om^2\} - p(p-2)\om^3 \beta^{-3} (\beta + 1)^{p-2}\\\notag
& = & p(p-2) \om \beta^{-3}[\beta(\beta -p+2)(\beta-\tau^2)-\om^2(\beta + 1)]\\\notag
& = & - p(p-2) \om \beta^{-3} (\beta + 1)^{p-3}[(\tau^2 + p-1)\beta^2 - \tau^2(p-3)\beta + \tau^2] \label{Biglambda}\\ 
\ee
So we can see that $\sign \Lambda = \sign g(\beta) = -\sign (p-2),$ by Lemma \ref{gtaup}.  Therefore, $\sign D_1 = \sign H'' \sign \Lambda = [-\sign(p-2)]^2$ by (\ref{Di}) and Lemma \ref{H''1_2}. \qedhere
\end{proof}

Since $D_1 > 0,$ then all that remains to be checked, for the restrictive concavity of $M,$ is that $M_{y_i y_i}$ (for $i = 1,2$) and $M_{y_3 y_3}$ have the appropriate signs.  But, it turns out that only for $2<p<\infty,$ will these have the appropriate signs.

\begin{lemma}\label{My3y3pgreaterthan2}
$\sign M_{\yone \yone} = \sign M_{\ytwo \ytwo} = \sign M_{\ythr \ythr} = -\sign (p-2)$ in Case $(1_2)$ for all $\tau \in \R.$  Therefore, $M$ is a partial Bellman function candidate for $2< p <\infty$ but not for $1<p<2,$ since it does not satisfy the required restrictive concavity.
\end{lemma}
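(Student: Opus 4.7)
The plan is to extract sign information from the formulas already derived in Lemma \ref{partialsofM} by combining them with the sign analysis of the auxiliary quantity $\Lambda = (p-1)\Phi'-\omega\Phi''$ carried out inside the proof of Lemma \ref{D1whenpgreaterthan2}. All three second derivatives will be read off from a single scalar quantity, so no new computation is really required --- only bookkeeping.

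First I would observe that on the sector $\frac{p-2}{p}y_1<y_2<y_1$ relevant to Case $(1_2)$, the function $\Phi'$ is strictly positive. Indeed, the formula
\[
\Phi'(\omega)=p\omega(\beta+1)^{p-2}\bigl[1-(p-2)\beta^{-1}\bigr]
\]
from (\ref{Phiofomega}), combined with the lower bound $\beta>p-1$ from Remark \ref{lowerbndbeta}, gives $1-(p-2)\beta^{-1}=(\beta-(p-2))/\beta>0$ for every $p\in(1,\infty)$. Hence $R_1=1/\Phi'>0$ on the entire sector, which I will use freely as the positive prefactor in all three expressions of Lemma \ref{partialsofM}.

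Next I would rewrite the quantity $\omega R_2+(p-1)R_1$ that appears in every formula of Lemma \ref{partialsofM}. Since $R_2=-\Phi''/(\Phi')^2$, one has
\[
\omega R_2+(p-1)R_1=\frac{(p-1)\Phi'-\omega\Phi''}{(\Phi')^2}=\frac{\Lambda}{(\Phi')^2},
\]
so its sign coincides with the sign of $\Lambda$. But the proof of Lemma \ref{D1whenpgreaterthan2} already shows (via Lemma \ref{gtaup}) that $\sign\Lambda=-\sign(p-2)$ throughout Case $(1_2)$ for every $\tau\in\R$. Plugging this into
\[
M_{y_3y_3}=\frac{p\omega^{p-2}R_1H^2}{y_3^3}\bigl[\omega R_2+(p-1)R_1\bigr]
\]
instantly gives $\sign M_{y_3y_3}=-\sign(p-2)$, since the prefactor is strictly positive.

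For $M_{y_iy_i}$ I would inspect the two summands inside the bracket of
\[
M_{y_iy_i}=\frac{p\omega^{p-2}R_1}{y_3}\Bigl([\omega R_2+(p-1)R_1](H')^2+\omega y_3 H''\Bigr).
\]
The first summand has sign $-\sign(p-2)$ by the $\Lambda$-computation above, while the second has sign $\sign H''=-\sign(p-2)$ by Lemma \ref{H''1_2} specialized to the choice $\alpha_1=\alpha_2=\beta_1=1$, $\beta_2=-1$ made in Proposition \ref{solution1_2}. The two summands therefore do not cancel, and $\sign M_{y_iy_i}=-\sign(p-2)$ follows for $i=1,2$. Combining with the $M_{y_3y_3}$ computation and with the earlier $D_1>0$ of Lemma \ref{D1whenpgreaterthan2} (and the assumption $D_2=0$), one obtains the restrictive concavity of Proposition \ref{D_i} precisely when $p>2$. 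For $1<p<2$ all three second derivatives are strictly positive on the sector, so $M$ cannot be restrictively concave there and hence cannot be a Bellman function candidate for that range of $p$. The only potential obstacle --- ensuring the two summands in the formula for $M_{y_iy_i}$ actually share a common sign --- is resolved the moment one notes that both are governed by $\Lambda$ and $H''$, each of which carries the same sign $-\sign(p-2)$.
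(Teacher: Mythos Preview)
Your proof is correct and takes essentially the same route as the paper: rewrite $\omega R_2+(p-1)R_1=\Lambda/(\Phi')^2$, use $\Phi'>0$ from Remark \ref{lowerbndbeta} and $\sign\Lambda=-\sign(p-2)$ from Lemma \ref{gtaup} to read off $\sign M_{y_3y_3}$, then note both bracket summands in the $M_{y_iy_i}$ formula share that sign. One small imprecision worth flagging: for $i=2$ Lemma \ref{H''1_2} actually gives $H_{y_2y_2}=0$ (since $\alpha_2=1=-\beta_2$), so the second summand vanishes rather than carrying the sign $-\sign(p-2)$; the conclusion is unaffected because the first summand alone then fixes the sign.
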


\begin{proof}
By (\ref{M33}),
\beno
M_{\ythr \ythr} = \frac{p \om^{p-2}R_1^2H^2}{y_3^3}\left[\frac{\Lambda}{\Phi'}\right]\\
\eeno
Remark \ref{lowerbndbeta} gives $\Phi' > 0.$  From Lemma \ref{gtaup}, $\sign M_{\ythr \ythr} = \sign \Lambda = \sign g(\beta) = -\sign (p-2).$  By (\ref{Mii}), for $i=1$or $2,$
\beno
M_{y_i y_i} &=& \frac{p \omega^{p-2}R_1}{\ythr}\left[(\omega R_2 + (p-1)R_1)(H')^2+\omega \ythr H''\right]\\
&=& \frac{p \omega^{p-2}}{\ythr (\Phi')^3}\left[\Lambda(H')^2+\omega \ythr H''(\Phi')^2\right],\\
\eeno
giving $\sign M_{\ytwo \ytwo} = -\sign (p-2).$  \qedhere
\end{proof}

The previous two lemmas established that the partial Bellman function candidate, from Case $(1_2)$ satisfies the restrictive concavity property, for $2<p<\infty.$  The candidate was constructed using the remaining Bellman function properties, so it is in fact a partial candidate.  Now we will turn our attention to Case $(2)$.  As it turns out, Case $(2_2)$ also gives a partial Bellman function candidate, which, as luck would have it, is the missing half of the parital Bellman candidate just constructed.  

\subsubsection{\bf Case $(2_2)$ for $2<p<\infty$}
We can obtain a Bellman candidate from Case $(2)$ without having to separately fix $\yone$ or $\ytwo.$  Let us compute the solution in this case.  

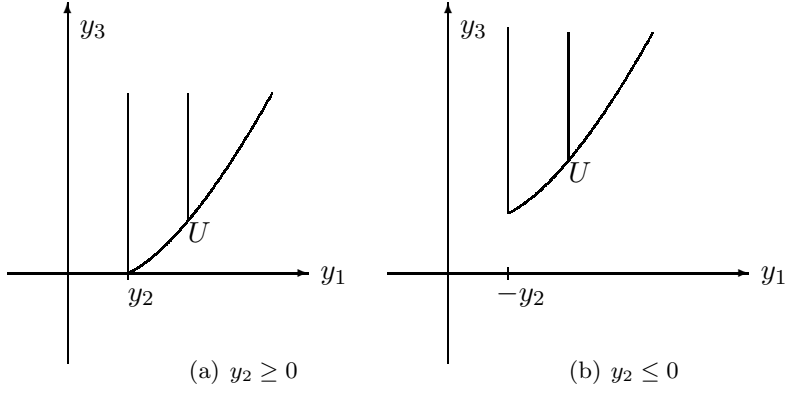
\begin{figure}[h]
  \centering
 \setlength{\unitlength}{0.8cm}
\subfigure[$y_2 \geq 0$]{
\begin{picture}(6,6)(0,-1)
\put(-1,0){\vector(1,0){5}} 
\put(0,-1.5){\vector(0,1){6}} 
\put(4.2,-0.1){$y_1$} 
\put(2,.52){$U$} 
\put(1, -.45){$y_2$} 
\put(0.2,4){$y_3$} 

\qbezier(1,0)(2,.5)(3.4,3) 

\put(2,.9){\line(0,1){2.1}} 
\put(1,-.1){\line(0,1){3.1}} 
\end{picture}}                
\subfigure[$y_2 \leq 0$]
{\setlength{\unitlength}{0.8cm}
\begin{picture}(6,6)(0,-1)
\put(-1,0){\vector(1,0){6}} 
\put(0,-1.5){\vector(0,1){6}} 
\put(5.2,-0.1){$y_1$} 
\put(2,1.52){$U$} 
\put(0.8, -.45){$-y_2$} 
\put(1,-.1){\line(0,1){.2}} 
\put(0.2,4){$y_3$} 

\qbezier(1,1)(2,1.5)(3.4,4) 

\put(2,1.9){\line(0,1){2.1}} 
\put(1,1){\line(0,1){3.1}} 

\end{picture}}
\caption{Sample characteristic of Monge--Amp\`ere solution in Case $(2_1)$}
\label{characteristic2_1}
\end{figure}

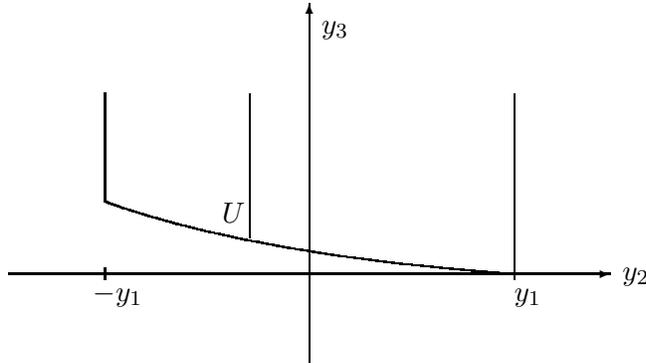
\begin{figure}[h]

\setlength{\unitlength}{0.8cm}

\begin{picture}(10,6)(-5,-1)
\put(-5,0){\vector(1,0){10}} 
\put(0,-1.5){\vector(0,1){6}} 
\put(5.2,-0.1){$y_2$} 

\put(-3.6,-.45){$-y_1$} 
\put(-3.4,-.1){\line(0,1){.2}} 

\put(-1.45,.85){$U$} 

\put(3.4, -.45){$y_1$} 
\put(0.2,4){$y_3$} 

\qbezier(3.4,0)(-0.8853,0.3)(-3.4,1.2) 

\put(-3.4,1.2){\line(0,1){1.8}} 

\put(-1,.6){\line(0,1){2.4}}
\put(3.4,-.1){\line(0,1){3.1}} 

\end{picture}
\caption{Sample characteristic of Monge--Amp\`ere solution for Case $(2_2)$} 
\label{characteritic2_2}
\end{figure}

\begin{lemma}\label{case2solution}
In Case $(2)$ we obtain 
\be \label{case2sol}
M(y) = (1+\tau^2)^{\frac p2}[y_1^2 + 2 \gamma y_1 y_2 + y_2^2]^{\frac p2} + c [y_3 - (y_1 - y_2)^p]
\ee
as a Bellman function candidate, where $c>0$ is some constant and $\gamma = \frac{1-\tau^2}{1+\tau^2}.$
\end{lemma}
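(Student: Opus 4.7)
The plan is to exploit the simple geometry of Case $(2)$, where the Monge--Amp\`ere characteristics are vertical rays parallel to the $y_3$-axis emanating from the boundary point $U=(y_1,y_2,(y_1-y_2)^p)$. Starting with Case $(2_2)$, where $y_1$ is frozen, the characteristic lies in the $y_2\times y_3$-plane with $y_2$ constant along it, so Proposition \ref{Pogorelov} tells us that $M_{y_2}=t_2$ and $M_{y_3}=t_3$ are constants on the ray. Constancy of $M_{y_3}$ along each vertical ray forces $M_{y_3}$ to be independent of $y_3$, so $M(y)=A(y_1,y_2)+y_3\, B(y_1,y_2)$. Plugging this into the constancy of $M_{y_2}$ along the same rays gives $B_{y_2}=0$, hence $B=B(y_1)$ only. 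Invoking the symmetry property $\M(y_1,y_2,y_3)=\M(y_2,y_1,y_3)$ of Proposition \ref{Bellmanprop}(i) (equivalently, repeating the argument in Case $(2_1)$ with $y_2$ frozen) gives $B(y_1)=B(y_2)$ for all $y_1,y_2$, so $B\equiv c$ is an absolute constant.

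Next, I would read off $A(y_1,y_2)$ by specializing the ansatz $M=A(y_1,y_2)+c\,y_3$ to the parabolic boundary $\{y_3=(y_1-y_2)^p\}$. Proposition \ref{Bellmanprop}(ii) gives
\[
A(y_1,y_2)+c(y_1-y_2)^p = ((y_1+y_2)^2+\tau^2(y_1-y_2)^2)^{p/2},
\]
and substituting back yields
\[
M(y)=((y_1+y_2)^2+\tau^2(y_1-y_2)^2)^{p/2}+c\bigl[y_3-(y_1-y_2)^p\bigr].
\]
A direct algebraic rearrangement,
\[
(y_1+y_2)^2+\tau^2(y_1-y_2)^2 = (1+\tau^2)(y_1^2+y_2^2)+2(1-\tau^2)y_1 y_2 = (1+\tau^2)[y_1^2+2\gamma y_1 y_2+y_2^2],
\]
with $\gamma=(1-\tau^2)/(1+\tau^2)$, converts the formula into the advertised form. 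Homogeneity (Proposition \ref{Bellmanprop}(iv)) is automatic for each summand and provides a quick consistency check.

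The main obstacle is not the algebra but the step that upgrades $B$ from a function of $y_1$ alone to a genuine constant: the single degenerate Monge--Amp\`ere equation of Case $(2_2)$ only pins $B$ down modulo a function of $y_1$, and one must bring in either the mirror Case $(2_1)$ or the symmetry of $\M$ to close the gap. I note that the value (and even the positivity) of $c$ is not fixed by this local argument — $c$ is simply the common slope $M_{y_3}$ shared by the affine extensions along all vertical characteristics. It will be pinned down later by demanding that the Case $(2)$ piece glue $C^1$-smoothly to the implicit Case $(1_2)$ candidate of Proposition \ref{solution1_2}; positivity of $c$ and restrictive concavity of the assembled candidate are then separate verifications, and it is at this matching step that the $\tau$-restriction appearing in Theorem \ref{mainresult} will enter.
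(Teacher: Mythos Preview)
Your argument is correct and follows the same arc as the paper's: both deduce from the vertical-ray characteristics that $M$ is affine in $y_3$, write $M=c_1(y_1,y_2)+c_2(y_1,y_2)\,y_3$, argue that $c_2$ is an absolute constant, and then read off $c_1$ from the Dirichlet boundary data. The one substantive difference is in how $c_2$ is forced to be constant. You use Pogorelov's constancy $M_{y_2}=t_2$ along the ray to get $c_{2,y_2}=0$, and then invoke the symmetry $\M(y_1,y_2,y_3)=\M(y_2,y_1,y_3)$ (equivalently, Case $(2_1)$) to kill the $y_1$-dependence. The paper instead notes that $M_{y_3 y_3}=0$ makes $D_i=-(\partial_{y_i}c_2)^2$, and then appeals to the restrictive-concavity requirement $D_i\ge 0$ from Proposition~\ref{D_i} for both $i=1,2$ to conclude $\partial_{y_i}c_2=0$. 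Your route is arguably cleaner, since it stays entirely within the Monge--Amp\`ere/Pogorelov structure and the symmetry of $\M$ without invoking the concavity inequality at this stage.

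One small gap: the lemma asserts $c>0$, which you explicitly defer. The paper handles this in one line by observing that the true Bellman function satisfies $\B(y)\to\infty$ as $y_3\to\infty$ (since $\langle(g^2+\tau^2 f^2)^{p/2}\rangle_I$ can be made arbitrarily large by taking $f$ with large $\langle|f|^p\rangle_I$), so any candidate with the affine form must have positive $y_3$-slope. You should include this; it does not require the later gluing.
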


\begin{proof}
In Case $(2)$, on the characteristic $y_i dt_i + y_3dt_3 + dt_0 = 0$, $\yone$ and $\ytwo$ are fixed.  Furthermore, on the characteristic, $t_0, t_i, t_3$ are fixed, so we have
\beno
M(y) &=& y_i t_i + \ythr t_3 + t_0\\
&=& (y_i t_i + t_0) + y_3 t_3\\
&=& c_1(y_1,y_2) + c_2(y_1, y_2) y_3\\
\eeno
Then $M_{\ythr \ythr} = 0$ and $M_{\ythr y_i} = \partial_{y_i}c_2.$  Recall that $D_i \geq 0$ by Remark \ref{D_i}, so $\partial_{y_i}c_2(y_1, y_2) = 0.$  This implies that $c_2$ is a constant.  Using the boundary data from Proposition \ref{Bellmanprop} gives $((\yone + \ytwo)^2+\tau^2(\yone - \ytwo)^2)^{\frac p2} = M(\yone,\ytwo, (\yone - \ytwo)^p) = c_1(\yone, \ytwo) + c_2(\yone - \ytwo)^p.$  Solving for $c_1(\yone, \ytwo)$ gives the result.  To see that $c_2>0,$ just notice that as $\ythr \to \infty, M(y) \to \infty.\qedhere$
\end{proof}

It is not possible to determine if this Bellman function candidate satisfies restrictive concavity, unless we know the value of the constant $c$ in Lemma \ref{case2solution}.  This constant can be computed by using the fact that (\ref{case2sol}) must agree with the partial candidate in Case $(1_2)$ at $y_2 = \frac{p-2}{p}\yone$, if (\ref{case2sol}) is in fact a candidate itself. 

\begin{lemma}
In Case $(2_2),$ the value of the constant in Lemma \ref{case2solution} is $c = ((p-1)^2 + \tau^2)^{\frac p2}$ for $2<p<\infty.$
\end{lemma}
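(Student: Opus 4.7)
My plan is to exploit the fact that the Case $(2_2)$ candidate from Lemma \ref{case2solution} was only determined up to a constant $c$, and that this candidate must match the Case $(1_2)$ candidate along the common boundary line $y_2 = \frac{p-2}{p}y_1$ in order for the two pieces to glue into a single $C^1$ Bellman candidate on $\Xi_+$. Evaluating both formulas on this line will give two explicit expressions in $y_1$ and $y_3$, and equating coefficients will pin down $c$.

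First I would evaluate the implicit Case $(1_2)$ formula along $y_2 = \frac{p-2}{p}y_1$. On this line we have $y_2 + (\frac{2}{p}-1)y_1 = 0$, so the left-hand side of the characteristic equation (\ref{chreq}) vanishes, which by monotonicity of $f$ (in the proof of Lemma \ref{reductionsolution1_2}) forces $u = -(\frac{2}{p}-1)y_1 = \frac{p-2}{p}y_1$. Substituting this value of $u$ into the formula
\[
M(y) = \left(\frac{\sqrt{(y_1+u)^2+\tau^2(y_1-u)^2}}{y_1-u}\right)^{\!p} y_3,
\]
and using $y_1+u = \frac{2(p-1)}{p}y_1$ and $y_1-u = \frac{2}{p}y_1$, collapses $\omega$ to $\sqrt{(p-1)^2+\tau^2}$. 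Hence Case $(1_2)$ yields
\[
M(y)\big|_{y_2 = \frac{p-2}{p}y_1} = \big((p-1)^2 + \tau^2\big)^{\!p/2}\, y_3.
\]

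Next I would evaluate the Case $(2_2)$ expression
\[
M(y) = (1+\tau^2)^{p/2}\big[y_1^2 + 2\gamma y_1 y_2 + y_2^2\big]^{p/2} + c\big[y_3 - (y_1-y_2)^p\big]
\]
on the same line. The term $(y_1-y_2)^p$ reduces to $(2y_1/p)^p$. For the quadratic form, with $\gamma = \frac{1-\tau^2}{1+\tau^2}$, a direct substitution gives
\[
y_1^2 + 2\gamma y_1 y_2 + y_2^2 = \frac{y_1^2}{p^2(1+\tau^2)}\Big[(p^2+(p-2)^2)(1+\tau^2) + 2p(p-2)(1-\tau^2)\Big],
\]
and regrouping the bracket as $(p+(p-2))^2 + \tau^2(p-(p-2))^2 = 4[(p-1)^2+\tau^2]$ yields
\[
(1+\tau^2)^{p/2}\big[y_1^2 + 2\gamma y_1 y_2 + y_2^2\big]^{p/2} = \frac{2^p y_1^p}{p^p}\big((p-1)^2+\tau^2\big)^{p/2}.
\]

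Finally I would equate the two expressions. Setting
\[
\frac{2^p y_1^p}{p^p}\big((p-1)^2+\tau^2\big)^{p/2} + c\, y_3 - c\,\frac{2^p y_1^p}{p^p} = \big((p-1)^2+\tau^2\big)^{p/2} y_3
\]
for all admissible $(y_1,y_3)$, the coefficient of $y_3$ forces $c = ((p-1)^2+\tau^2)^{p/2}$, and one then checks that with this $c$ the coefficient of $y_1^p$ also vanishes, confirming consistency. I do not expect a serious obstacle here: the only delicate point is the algebraic simplification of the quadratic form, which collapses cleanly thanks to the specific value of $\gamma$ dictated by Case $(2)$. The restriction to $2 < p < \infty$ enters only via the fact that the Case $(1_2)$ sector $\frac{p-2}{p}y_1 < y_2 < y_1$ is non-degenerate and lies to the right of the matching line.
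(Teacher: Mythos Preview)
Your proof is correct and follows essentially the same approach as the paper: match the Case $(2_2)$ candidate with the Case $(1_2)$ candidate along $y_2=\frac{p-2}{p}y_1$, show that the implicit solution reduces to $M=((p-1)^2+\tau^2)^{p/2}y_3$ there, and solve for $c$. The only cosmetic difference is that you extract $\omega$ by going back to the characteristic equation \eqref{chreq} and solving for $u$, whereas the paper uses the equivalent $G$-relation from Proposition~\ref{solution1_2} to get $\sqrt{\omega^2-\tau^2}=p-1$ directly; the computations are otherwise identical.
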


\begin{proof}
If $M(y) = (1+\tau^2)^{\frac p2}[y_1^2 + 2 \gamma y_1 y_2 + y_2^2]^{\frac p2} + c [y_3 - (y_1 - y_2)^p]$ (where $\gamma = \frac{1-\tau^2}{1+\tau^2}$) is to be a candidate, or partial candidate, then it must agree at $y_2 = \frac{p-2}{p}\yone,$ with the solution $M$ given implicitly by the relation $G(\yone +\ytwo, \yone -\ytwo) = \ythr G(\sqrt{\om^2 -\tau^2}, 1)$, from Proposition \ref{solution1_2}.  At $y_2 = \frac{p-2}{p}\yone,$ 
\beno
(\sbeta + 1)^{p-1}[\sbeta - p+ 1] &=& G(\sbeta, 1)\\ 
&=& \frac{1}{\ythr}(2y_1)^{p-1}[-(p-2)y_1 + (p-2)y_1] = 0.\\  
\eeno
Since $\sbeta + 1 \neq 0$ then $\sbeta = p-1,$ which implies $\om = ((p-1)^2 + \tau^2)^{\frac 12}.$  So,  
\beno
((p-1)^2 + \tau^2)^{\frac p2} \ythr &=& \om^p \ythr\\ 
&=& M(\yone, \frac{p-2}{p}\yone, \ythr)\\
&=& \left[\left(2\frac{p-1}{p}y_1\right)^2 + \tau^2\left(\frac 2p \yone\right)^2\right]^{\frac p2} + c\left[\ythr - \left(\frac p2 \yone\right)^p\right].\\
\eeno
Now just solve for $c.$ \qedhere
\end{proof}

\subsubsection{\bf Gluing together partial candidates from Cases $(1_2)$ and $(2_2)$}\label{gluingp>2}

It turns out that the Bellman function candidate obtained from Case $(2_2)$ is only valid on part of the domain $\Xi_+$, since it does not remain concave throughout (for example at or near $(\yone, \yone, \ythr)$).  As luck would have it, the partial candidate has the necessary restrictive concavity on the part of the domain where the candidate from Case $(1_2)$ left off, i.e. in $-\yone < \ytwo < \frac{p-2}{p}\yone.$  This means that we can glue together the partial candidate from Cases $(1_2)$ and $(2_2)$ to get a candidate on $\Xi_+$ for $2<p<\infty.$  The characteristics for this solution can be seen in ~Figure ~\ref{gluedchar1_2and2_2}.

\begin{figure}[h]

\setlength{\unitlength}{0.8cm}

\begin{picture}(10,6)(-5,-1)
\put(-5,0){\vector(1,0){10}} 
\put(0,-1.5){\vector(0,1){6}} 
\put(5.2,-0.1){$y_2$} 

\put(-3.6,-.45){$-y_1$} 
\put(-3.4,-.1){\line(0,1){.2}} 

\put(1.5,-.6){$\frac{p-2}{p}y_1$} 
\put(1.4,-.1){\line(0,1){.2}}

\put(3.4, -.45){$y_1$} 
\put(0.2,4){$y_3$} 

\qbezier(3.4,0)(-0.8853,0.3)(-3.4,1.2) 

\put(-3.4,1.2){\line(0,1){1.8}} 
\put(-2.8,.97){\line(0,1){2.03}}
\put(-2.2,.8){\line(0,1){2.2}}
\put(-1.6,.7){\line(0,1){2.3}}
\put(-1,.6){\line(0,1){2.4}}
\put(-.4,.5){\line(0,1){2.5}}
\put(.2,.4){\line(0,1){2.6}}
\put(.8,.3){\line(0,1){2.7}}
\put(1.4,.2){\line(0,1){2.8}}
\put(1.45,.184){\line(1,6){.47}} 
\qbezier(1.5,.177)(2.524,2.2)(2.816,3)
\qbezier(1.55,.17)(3,2.1)(3.4,2.6)
\qbezier(1.72,.15)(3,1.12)(3.4,1.5)
\qbezier(1.8,.14)(3,.52)(3.4,.7)
\qbezier(2.15,.1)(3,.22)(3.4,.3)
\put(3.4,-.1){\line(0,1){3.1}} 

\end{picture}

\caption{Characteristics of Bellman candidate for $2<p<\infty$} 
\label{gluedchar1_2and2_2}

\end{figure}
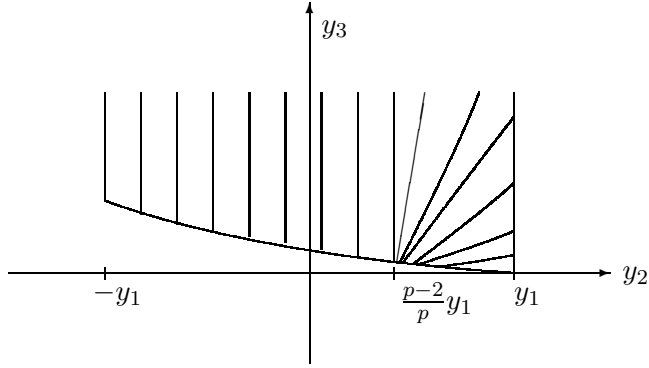

\begin{prop}\label{solution1_2and2_2glue}
For $2<p<\infty, \gamma = \frac{1-\tau^2}{1+\tau^2}$ and $\tau \in \R,$ the solution to the Monge--Amp\`ere equation is given by 
\beno
M(y) = (1+\tau^2)^{\frac p2}[y_1^2 + 2 \gamma y_1 y_2 + y_2^2]^{\frac p2} + ((p-1)^2 + \tau^2)^{\frac p2} [y_3 - (y_1 - y_2)^p] 
\eeno
when $-\yone < \ytwo \leq \frac{p-2}{p}\yone$ and is given implicitly by 

$G(\yone +\ytwo, \yone -\ytwo) = \ythr G(\sqrt{\om^2 -\tau^2}, 1)$ when $\frac{p-2}{p}\yone \leq \ytwo < \yone,$ where $G(z_1, z_2) = (z_1 + z_2)^{p-1}[z_1 - (p-1)z_2]$ and $\omega = \left(\frac{M(y)}{y_3} \right)^{\frac 1p}$  This solution satisfies all properties of the Bellman function.
\end{prop}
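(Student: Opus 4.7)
The strategy is to assemble the two partial candidates already produced in Cases $(1_2)$ and $(2_2)$ and check that the join produces a genuine solution of the Monge--Amp\`ere equation on the full sector $\Xi_+$ with all the required properties.

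First, on the region $S_1 := \{\frac{p-2}{p}y_1 \le y_2 < y_1\}$, the implicit function defined by $G(y_1+y_2,y_1-y_2) = y_3\, G(\sqrt{\omega^2-\tau^2},1)$ is exactly the partial candidate of Proposition \ref{solution1_2}. Lemmas \ref{D1whenpgreaterthan2} and \ref{My3y3pgreaterthan2} show that for $2<p<\infty$ this function satisfies $M_{y_jy_j}\le0$, $M_{y_3y_3}\le0$, $D_1\ge0$ and $D_2=0$, so restrictive concavity (Proposition \ref{D_i}) holds on $S_1$. The Bellman properties of Proposition \ref{Bellmanprop} were built into its construction.

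Second, on the region $S_2 := \{-y_1 < y_2 \le \frac{p-2}{p}y_1\}$, take the explicit formula obtained in Lemma \ref{case2solution} with the constant $c=((p-1)^2+\tau^2)^{p/2}$ forced by the matching computation. The plan here is: (a) observe that $y_1^2+2\gamma y_1 y_2+y_2^2 = \frac{1}{1+\tau^2}[(y_1+y_2)^2+\tau^2(y_1-y_2)^2]$, so the Dirichlet boundary data $M(y_1,y_2,(y_1-y_2)^p) = ((y_1+y_2)^2+\tau^2(y_1-y_2)^2)^{p/2}$ is satisfied by inspection; (b) verify $p$-homogeneity, symmetry and the Neumann conditions by direct substitution (all visible at the level of the formula since the quadratic form $(y_1+y_2)^2+\tau^2(y_1-y_2)^2$ is symmetric in the right sense); (c) verify restrictive concavity. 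For (c), the second $y_3$-derivative vanishes (since $M$ is linear in $y_3$ here), so $D_i = -M_{y_3 y_i}^2$, and one must check $M_{y_3y_i}=0$, which is automatic since the $y_3$-coefficient is a constant. Hence $D_1=D_2=0$; the remaining conditions reduce to $M_{y_iy_i}\le 0$, i.e.\ the concavity of the function $(y_1,y_2)\mapsto ((y_1+y_2)^2+\tau^2(y_1-y_2)^2)^{p/2}$ after subtracting a constant multiple of $(y_1-y_2)^p$. I would check this by computing the Hessian of the quadratic form directly and comparing with the Hessian of $(y_1-y_2)^p$ on the sector $S_2$; the sign of the resulting $2\times 2$ symmetric matrix is controlled by the fact that $|y_2|$ is bounded away from $y_1$ in $S_2$.

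The main obstacle is the $C^1$-matching along the interface $y_2=\frac{p-2}{p}y_1$. On this curve the implicit solution from $S_1$ satisfies $\sqrt{\omega^2-\tau^2}=p-1$ (the computation used to pin down $c$), hence $M=((p-1)^2+\tau^2)^{p/2}y_3$ there. Evaluating the explicit formula on the same curve gives the same value, which secures $C^0$-matching. For $C^1$-matching, I would differentiate the defining implicit relation tangentially along $y_2=\frac{p-2}{p}y_1$ to read off $M_{y_1},M_{y_2},M_{y_3}$ from the $S_1$ side using Proposition \ref{Pogorelov} (the gradient is constant on each characteristic, so at the interface it equals the gradient at the boundary endpoint $U$), and check equality with the partials of the explicit formula from the $S_2$ side. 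Because Case $(2_2)$ has the degenerate characteristics parallel to the $y_3$-axis, $M_{y_3}=((p-1)^2+\tau^2)^{p/2}$ on the whole region $S_2$, which is precisely the value of $\omega^p$ at the interface, giving the match in the $y_3$-direction. The tangential derivatives $M_{y_1},M_{y_2}$ then match because both sides satisfy the same Dirichlet data and the same homogeneity relation (Proposition \ref{Bellmanprop}(v)), which over-determines the gradient once $M$ and $M_{y_3}$ agree.

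Finally, Assumption \ref{degenerateassumption} is satisfied on each side (with $j=2$ on $S_1$ by construction, and with either $j$ on $S_2$ since $M_{y_3y_3}=0$), so the glued function is a bona fide Bellman candidate on all of $\Xi_+$, and its extension to $\Xi$ by the symmetry in Proposition \ref{Bellmanprop}(i) completes the proof.
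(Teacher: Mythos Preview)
Your overall architecture matches the paper's: assemble the implicit candidate on $S_1$ from Proposition~\ref{solution1_2}, the explicit one on $S_2$ from Lemma~\ref{case2solution} with the constant fixed by matching, and verify restrictive concavity on each piece. Your treatment of $S_1$ and the reduction on $S_2$ to checking $M_{y_iy_i}\le 0$ (since $M_{y_3y_3}=M_{y_3y_i}=0$) are both correct and essentially what the paper does.

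The genuine gap is step~(c). Saying that ``the sign of the resulting $2\times 2$ symmetric matrix is controlled by the fact that $|y_2|$ is bounded away from $y_1$ in $S_2$'' is not a proof. In the paper this is the content of Lemma~\ref{2_2glue3}, and it is by far the hardest part of the argument: after noting $M_{y_2y_2}\le M_{y_1y_1}$ on $S_2$, the paper reverts to $x$-coordinates, writes the inequality $M_{y_1y_1}\le 0$ as
\[
(p-2)(s+\tau^2)^2+(1+\tau^2)(s^2+\tau^2)-(p-1)((p-1)^2+\tau^2)^{p/2}(s^2+\tau^2)^{(4-p)/2}\le 0,
\qquad s=\tfrac{x_2}{x_1}\in[0,p-1],
\]
and then handles the ranges $p\ge 4$, $p=4$, and $2<p<4$ separately, the last of these requiring a further subdivision of the $s$-interval into $(0,\tfrac{1-\tau^2}{2})$, $(\tfrac{1-\tau^2}{2},1)$, and $(1,p-1)$ with different auxiliary functions on each. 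None of this is captured by the qualitative observation that $y_1-y_2\ge \tfrac{2}{p}y_1$; the competing terms have different homogeneities in $s^2+\tau^2$ and the inequality is genuinely delicate near the interface. You need to actually carry out this estimate.

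A secondary remark: your $C^1$-matching paragraph is reasonable but is not how the paper organizes things. The paper postpones global $C^1$-smoothness and restrictive concavity across the gluing line to Section~\ref{showingwehavetrueBellmanfunction}, where it is obtained indirectly via the unifying function $U$ (Proposition~\ref{unifyingsolution}, Lemma~\ref{smoothnessonline}, Proposition~\ref{candidateconcavity}). Your direct approach via Pogorelov's characteristic description can be made to work, but be aware that the paper does not claim or prove the interface smoothness at this stage.
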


We already know that the implicit part of the solution has the correct restrictive concavity property of the Bellman function, as shown in Section \ref{partial1_2subsec}.  However, the restrictive concavity still needs to be verified for the explicit part.  Since the explict part of the solution satisfies $M_{\ythr y_i} = M_{\ythr \ythr} = 0,$ then $D_i = 0,$ for $i = 1,2.$  So all that remains to be verified for the restrictive concavity of the explicit part is checking the sign of $M_{y_i y_i},$ for $i = 1,2.$  Observe that the explicit part can be written as
\be \label{alternateexplicitform}
M(y) = [(\yone +\ytwo)^2+\tau^2(\yone -\ytwo)^2]^{\frac p2} + C_{p,\tau}[\ythr - (\yone -\ytwo)^p].
\ee 
It is easy to check that $M_{\ytwo \ytwo} \leq M_{\yone \yone}$ on $-\yone < \ytwo \leq \frac{p-2}{p}\yone$ for $2<p<\infty.$  So we only need to find the largest range of $\tau$'s such that $M_{\yone \yone} \leq 0.$  

\begin{lemma} \label{2_2glue3}
In Case $(2_2), M_{y_1 y_1} \leq 0$ on $-\yone < \ytwo \leq \frac{p-2}{p}\yone$ for all $\tau \in \R.$
\end{lemma}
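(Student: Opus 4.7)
The strategy is to compute $M_{y_1y_1}$ explicitly, factor out the positive homogeneous factor $pv^{p-2}$ where $v=y_1-y_2$, reduce the problem to a one-variable inequality in the scale-invariant variable $t:=(y_1+y_2)/(y_1-y_2)$, and then prove that inequality via a single application of Cauchy--Schwarz.

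Since $M(y) = [(y_1+y_2)^2+\tau^2(y_1-y_2)^2]^{p/2}+C_{p,\tau}[y_3-(y_1-y_2)^p]$ (with $C_{p,\tau}=((p-1)^2+\tau^2)^{p/2}$) is linear in $y_3$, the $y_3$-term contributes nothing to $M_{y_1y_1}$. Applying the chain rule to the boundary-data part with $u=y_1+y_2$, $v=y_1-y_2$ yields
\beno
M_{y_1y_1}=p(1+\tau^2)(u^2+\tau^2v^2)^{p/2-1}+p(p-2)(u+\tau^2v)^2(u^2+\tau^2v^2)^{p/2-2}-p(p-1)C_{p,\tau}\,v^{p-2}.
\eeno
On the region $-y_1<y_2\le\frac{p-2}{p}y_1$ one has $y_1>0$ and $v\ge\frac{2}{p}y_1>0$, so we may divide through by $pv^{p-2}>0$. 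Setting $t:=u/v\in(0,p-1]$ (the endpoints corresponding to $y_2=-y_1$ and $y_2=\frac{p-2}{p}y_1$), the inequality $M_{y_1y_1}\le 0$ reduces to the scalar bound
\beno
(1+\tau^2)(t^2+\tau^2)^{p/2-1}+(p-2)(t+\tau^2)^2(t^2+\tau^2)^{p/2-2}\ \le\ (p-1)((p-1)^2+\tau^2)^{p/2}.
\eeno

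This one-variable inequality follows by chaining three elementary bounds. First, Cauchy--Schwarz applied to the vectors $(t,|\tau|)$ and $(1,|\tau|)$ gives $(t+\tau^2)^2\le(t^2+\tau^2)(1+\tau^2)$, which is merely a rewriting of $\tau^2(t-1)^2\ge 0$. Substituting this estimate into the middle term of the LHS collapses it to $(p-1)(1+\tau^2)(t^2+\tau^2)^{p/2-1}$. Second, since $p>2$ one has $p/2-1>0$, so $t\le p-1$ yields $(t^2+\tau^2)^{p/2-1}\le((p-1)^2+\tau^2)^{p/2-1}$. Third, $1\le(p-1)^2$ for $p\ge 2$ gives $1+\tau^2\le(p-1)^2+\tau^2$. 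Multiplying these three bounds produces the right-hand side $(p-1)((p-1)^2+\tau^2)^{p/2}$, completing the argument.

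The main conceptual step is recognizing Cauchy--Schwarz as the natural tool here; after that the estimate is a clean telescoping of elementary bounds, with no serious obstacle. The constant $C_{p,\tau}=((p-1)^2+\tau^2)^{p/2}$, fixed earlier by requiring that the implicit and explicit Bellman candidates match along $y_2=\frac{p-2}{p}y_1$, is precisely large enough to absorb the defect in each step, which explains why the bound holds uniformly for all $\tau\in\R$ in this case.
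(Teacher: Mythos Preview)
Your proof is correct and considerably cleaner than the paper's. The paper changes to $x$-coordinates, sets $s=x_2/x_1$ (which is your $t$), and then performs a case analysis: first splitting on $p$ (the ranges $p\ge 4$, $p=4$, $2\le p<4$), and for $2<p<4$ further splitting the $s$-interval into $(1,p-1)$, $(0,\tfrac{1-\tau^2}{2})$, and $(\tfrac{1-\tau^2}{2},1)$, using a different crude bound on $(s+\tau^2)^2$ in each piece (for instance $(s+\tau^2)^2\le(s^2+\tau^2)^2$ on $s>1$ and $(s+\tau^2)^2\le s^2+\tau^2$ on $s\le\tfrac{1-\tau^2}{2}$). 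Your single Cauchy--Schwarz inequality $(t+\tau^2)^2\le(t^2+\tau^2)(1+\tau^2)$ replaces all of this at once, collapsing the two positive terms to $(p-1)(1+\tau^2)(t^2+\tau^2)^{p/2-1}$ and letting the monotonicity in $t$ and the bound $1+\tau^2\le(p-1)^2+\tau^2$ finish the job uniformly. Besides being shorter, your argument makes the validity for all $\tau\in\R$ transparent, whereas several of the paper's interval decompositions tacitly assume $|\tau|\le 1$.
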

\begin{proof}
Changing coordinates back to $x$ will make the estimates much easier.  So we would like to show that, on $0 \leq x_2 \leq (p-1)x_1,$ we have,
\be \label{plargertwoconcavityestimate}
M_{\yone \yone} \leq 0,
\ee
where $C_{p,\tau} = ((p-1)^2+\tau^2)^{\frac p2}$ and $\frac 1p M_{\yone \yone} =$ 
\beno
(p-2)(x_2^2+\tau^2 x_1^2)^{\frac{p-4}{2}}(x_2+\tau^2 x_1)^2 +(1+\tau^2)(x_2^2 +\tau^2 x_1^2)^{\frac{p-2}{2}}-(p-1)C_{p,\tau}x_1^{p-2}.
\eeno

First, consider $4\leq p < \infty.$  If $p \neq 4,$ then showing (\ref{plargertwoconcavityestimate}) is equivalent to 
\beno
(p-2)(p-1+\tau^2)^2+(1+\tau^2)((p-1)^2+\tau^2)-(p-1)((p-1)^2+\tau^2)^2 \leq 0,
\eeno
which can be verified using direct calculations, for all $\tau.$  Let $s = \frac{x_2}{x_1},$ then (\ref{plargertwoconcavityestimate}) simplifies to showing,
\beno
F(s) = (p-2)(s+\tau^2)^2+(1+\tau^2)(s^2+\tau^2) -C_{p, \tau}(p-1)(s^2+\tau^2)^{\frac{4-p}{2}} \leq 0,
\eeno
where $0 \leq s \leq p-1.$  For $p = 4, F$ is a quadratic function that is increasing on  $(\frac{-2\tau^2}{\tau^2+3}, p-1).$  Since $F(3) \leq 0,$ then $F(s) \leq 0$ on $(0,3).$

Now we will consider $2\leq p<4.$  Note that $F(s) = 0$ at $p=2,$ so we can assume that $p \neq 2.$  Breaking up the domain of $F$ will make things easier.  For $s \in (1, p-1),$ we have the following estimate, $(s+\tau^2)^2 \leq (s^2+\tau^2)^2.$  Let $t = s^2 + \tau^2,$ then 
\beno
\frac 1t F(s) \leq (p-2)t +1 +\tau^2 - C_{p,\tau}(p-1)t^{\frac{2-p}{2}}:=g_1(t).
\eeno   
Observe that $g_1$ is increasing on $1+\tau^2 \leq t \leq (p-1)^2+\tau^2$ and $g_1((p-1)^2+\tau^2) \leq 0.$ Therefore, $F(s) \leq 0$ on $(1, p-1).$  

For $s \in (0, \frac{1-\tau^2}{2}),$ we have the estimate $(s+\tau^2)^2 \leq s^2+\tau^2.$  Let $t = s^2 + \tau^2,$ then 
\beno
\frac 1t F(s) \leq p-1+\tau^2 - C_{p,\tau}(p-1)t^{\frac{2-p}{2}}:=g_2(t).
\eeno   
Since $g_2$ is increasing on $(\tau^2, \frac{(1-\tau)^2}{4}+\tau^2)$ and $g_2(\frac{(1-\tau)^2}{4}+\tau^2) \leq 0,$ then $F(s) \leq 0$ on $(1, p-1)$ and on $(0, \frac{1-\tau^2}{2}).$

All that remains is so show that $F(s) \leq 0$ on $(\frac{1-\tau^2}{2}, 1).$  If we estimate in the crudest possible way, on this interval, then we obtain:
\beno
\frac{1}{p-1}F(s) \leq (1+\tau^2)^2-((p-1)^2+\tau^2)^{\frac p2}\left(\frac{(1-\tau^2)^2}{4}+\tau^2\right)^{\frac{4-p}{2}} \leq 0, 
\eeno   
for all $|\tau| \leq 1$ and $3 \leq p \leq 4,$ by direct calculations.  So we need to estimate a little more carefully.  On $(\frac{1-\tau^2}{2}, 1),$ let $t = s^2 + \tau^2.$  So $t$ must be in the range $\frac{(1-\tau)^2}{4}-\tau^2 \leq t \leq 1+\tau^2.$  Then,
\beno
F(s) &=& (p-2)(\sqrt{t-\tau^2}+\tau^2)^2+(1+\tau^2)t-C_{p,\tau}(p-1)t^{\frac{4-p}{2}}\\
&=& (p-2)(t-\tau^2+2\tau^2\sqrt{t-\tau^2}+\tau^4)+(1+\tau^2)t-C_{p,\tau}(p-1)t^{\frac{4-p}{2}}\\
& \leq & (p-2)(t+\tau^2+\tau^4)+(1+\tau^2)t-C_{p,\tau}(p-1)t^{\frac{4-p}{2}} =:g_3(t)\\
\eeno 
One can see that $g_3$ is decreasing for $2 \leq p < 3.95$ and $g_3\left(\frac{(1-\tau^2)^2}{4}+\tau^2\right)\leq 0,$ for all $|\tau|\leq 1,$ by direct calculations.   Thus, $F(s) \leq 0$ on $(\frac{1-\tau^2}{2}, 1).$\qedhere  
\end{proof}

We have now verified that the explicit part of the Bellman function candidate, from Case $(2_2),$ has the appropriate restrictive concavity.  So we have proven Proposition \ref{solution1_2and2_2glue}, by Lemmas \ref{D1whenpgreaterthan2}, \ref{My3y3pgreaterthan2} and \ref{2_2glue3}.  Now that we have a Bellman candidate for $2<p<\infty,$ we will turn our attention to $p$-values in the dual range $1<p<2.$

\subsection{\bf The Bellman function candidate for $1<p<2$}\label{solutionplessthan2}

In order to get a Bellman function candidate for $1<p<2$ we just need to glue together candidates from Cases $(2_2)$ and $(3_2)$ in almost the same way as we did for $2<p<\infty$ in  Section \ref{solutionpgreaterthan2}.  Refer to Addendum 1 (Section \ref{addendum1}) for full details.  

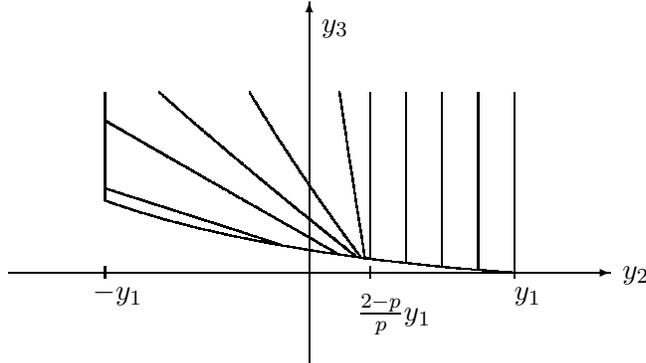
\begin{figure}[h]

\setlength{\unitlength}{0.8cm}

\begin{picture}(10,6)(-5,-1)
\put(-5,0){\vector(1,0){10}} 
\put(0,-1.5){\vector(0,1){6}} 
\put(5.2,-0.1){$y_2$} 

\put(-3.6,-.45){$-y_1$} 
\put(-3.4,-.1){\line(0,1){.2}} 

\put(.8,-.8){$\frac{2-p}{p}y_1$} 
\put(1,-.1){\line(0,1){.2}}

\put(3.4, -.45){$y_1$} 
\put(0.2,4){$y_3$} 

\qbezier(3.4,0)(-0.8853,0.3)(-3.4,1.2) 

\put(-3.4,1.2){\line(0,1){1.8}} 

\qbezier(.92,.24)(.61,2.2)(.5,3) 
\qbezier(.84,.27)(-.22,1.7)(-1,3)
\qbezier(.76,.28)(-1,1.67)(-2.5,3)
\qbezier(.44,.33)(-2,1.72)(-3.4,2.53)
\qbezier(-.5,.47)(-2,.96)(-3.4,1.4)

\put(1,.25){\line(0,1){2.75}} 
\put(1.6,.17){\line(0,1){2.83}} 
\put(2.2,.1){\line(0,1){2.9}} 
\put(2.8,.05){\line(0,1){2.95}} 
\put(3.4,-.1){\line(0,1){3.1}} 

\end{picture}

\caption{Characteristics of Bellman candidate for $1<p<2$ and $|\tau| \leq \frac 12.$} 
\label{gluedchar2_2and3_2}

\end{figure}

\begin{prop}\label{solutionsfrom2_2and3_2}
Let $1<p<2$ and $\gamma = \frac{1-\tau^2}{1+\tau^2}.$  If $|\tau| \leq \frac 12,$ then a solution to the Monge--Amp\`ere equation is given by 

$M(y) = (1+\tau^2)^{\frac p2}[y_1^2 + 2 \gamma y_1 y_2 + y_2^2]^{\frac p2} + \left(\frac{1}{(p-1)^2} + \tau^2\right)^{\frac p2} [y_3 - (y_1 - y_2)^p]$ when $\frac{2-p}{p}\yone \leq \ytwo < \yone$ and is given implicitly by 

$G(\yone -\ytwo, \yone + \ytwo) = \ythr G(1, \sqrt{\om^2 -\tau^2})$ when $-\yone < \ytwo \leq \frac{2-p}{p}\yone,$ where $\omega = \left(\frac{M(y)}{y_3} \right)^{\frac 1p}.$  This solution satisfies all of the properties of the Bellman function.
\end{prop}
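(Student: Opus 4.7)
The strategy parallels the $p>2$ argument of Proposition \ref{solution1_2and2_2glue}, but with the roles of Cases $(1_2)$ and $(3_2)$ interchanged. First, I would derive the Case $(3_2)$ solution by a procedure entirely analogous to Lemma \ref{reductionsolution1_2}: a characteristic now runs from a boundary point $U=(y_1,u,(y_1-u)^p)$ to an endpoint $W=(y_1,-y_1,w)$ on the plane $\{y_1=-y_2\}$. Combining the Neumann condition of Proposition \ref{Bellmanprop}(iii) at $W$ with the homogeneity relation (v) forces $t_0 = -(\frac{2}{p}-1)y_1\,t_2$, and the fact that $M(y)/y_3$ is constant along characteristics together with the Dirichlet data yields an implicit relation which, after solving for $u$ in terms of $\sqrt{\omega^2-\tau^2}$, takes the form $G(y_1-y_2,y_1+y_2)=y_3\,G(1,\sqrt{\omega^2-\tau^2})$. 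A monotonicity check analogous to the one at the end of Lemma \ref{reductionsolution1_2} shows this relation determines $M$ uniquely on the sector $-y_1<y_2\leq\frac{2-p}{p}y_1$.

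Second, Lemma \ref{case2solution}, whose proof is case-agnostic, supplies the Case $(2_2)$ solution on the complementary sector up to an undetermined constant $c$. I would fix $c$ by enforcing continuity across the hyperplane $y_2=\frac{2-p}{p}y_1$: evaluating $G(y_1-y_2,y_1+y_2)$ there yields $(2y_1)^{p-1}\bigl[\frac{2(p-1)}{p}y_1-(p-1)\frac{2}{p}y_1\bigr]=0$, hence $G(1,\sqrt{\omega^2-\tau^2})=0$, which in turn forces $\sqrt{\omega^2-\tau^2}=\frac{1}{p-1}$ and $\omega^p=\bigl(\frac{1}{(p-1)^2}+\tau^2\bigr)^{p/2}$. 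Matching $\omega^p y_3$ against the explicit formula of Lemma \ref{case2solution} immediately gives $c=\bigl(\frac{1}{(p-1)^2}+\tau^2\bigr)^{p/2}$. The Bellman properties (i)--(v) of Proposition \ref{Bellmanprop} are inherited from the construction exactly as in the $p>2$ case.

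Third, I would verify restrictive concavity. The implicit piece is handled in direct analogy with Section \ref{partial1_2subsec}: apply Lemma \ref{partialsofM} with $H(y_1,y_2)=G(y_1-y_2,y_1+y_2)$; observe $H''=4G_{z_1z_2}$ from the ``$\alpha_j=\beta_j$'' branch of Lemma \ref{H''1_2}; and use a lower bound $\sqrt{\omega^2-\tau^2}>\frac{1}{p-1}$ dual to Remark \ref{lowerbndbeta} (which follows from the same application of Proposition \ref{solutionsfrom2_2and3_2}) to control the factor $\Lambda=(p-1)\Phi'-\omega\Phi''$. Since $1<p<2$, the sign of $(p-2)$ in Lemma \ref{gtaup} flips and the inequalities $M_{y_jy_j},M_{y_3y_3}\leq 0$ and $D_1\geq 0$ follow, with $D_2=0$ by Assumption \ref{degenerateassumption}. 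For the explicit piece, $M_{y_3y_3}=M_{y_3y_i}=0$, so $D_i=0$ automatically and it remains only to verify $M_{y_iy_i}\leq 0$ on the sector $\frac{2-p}{p}y_1\leq y_2<y_1$.

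The main obstacle, as I expect, is this last estimate. In contrast to Lemma \ref{2_2glue3}, the dual range $1<p<2$ and the replacement of $((p-1)^2+\tau^2)^{p/2}$ by the smaller constant $\bigl(\frac{1}{(p-1)^2}+\tau^2\bigr)^{p/2}$ reverse several of the naive estimates, and this is precisely where the hypothesis $|\tau|\leq\tfrac12$ enters. I would change variables back to $(x_1,x_2)$, set $s=x_2/x_1$ with $s\in[0,\frac{1}{p-1}]$, and split this interval into subintervals on each of which a crude bound reduces $M_{y_1y_1}\leq 0$ to the nonpositivity of a one-variable function of $t=s^2+\tau^2$ whose sign is then checked at the endpoints; the worst case pins down $|\tau|=\tfrac12$. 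For $|\tau|>\tfrac12$ genuine failure of restrictive concavity is expected, which is consistent with the introduction's remark that the $1<p<2$ case requires a different construction for large $\tau$ (see \cite{BJV}) and with the material deferred to Addendum 1.
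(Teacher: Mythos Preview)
Your overall architecture is correct and matches the paper's Addendum~1: derive the Case~$(3_2)$ implicit solution from characteristics hitting $\{y_2=-y_1\}$, determine $c$ in Case~$(2_2)$ by matching at $y_2=\frac{2-p}{p}y_1$, and then verify restrictive concavity on each piece. Two points in your concavity discussion need correction, however.

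For the implicit piece you invoke Lemma~\ref{gtaup}, but that lemma is tied to $\Phi(\omega)=G(\beta,1)$ from Case~$(1_2)$. In Case~$(3_2)$ one has $\Phi(\omega)=G(1,\beta)$, and the paper's computation (Lemma~\ref{D13_2}) yields a different and in fact simpler expression,
\[
\Lambda=-\frac{p(p-1)(p-2)\,\omega\,(1+\beta)^{p-3}[\beta-\tau^2]}{\beta},
\]
whose sign is governed by $\beta-\tau^2>0$ rather than by the quadratic of Lemma~\ref{gtaup}. Your lower bound $\beta>\frac{1}{p-1}$ is indeed the right dual to Remark~\ref{lowerbndbeta}, and it gives $\beta-\tau^2>0$ for $|\tau|\le 1$ when $1<p<2$; but the derivation is not a sign flip in Lemma~\ref{gtaup}.

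The more serious issue is the explicit piece. First, for $1<p<2$ one has $M_{y_1y_1}\le M_{y_2y_2}$ (the opposite of the $p>2$ order), so the critical inequality is $M_{y_2y_2}\le 0$, not $M_{y_1y_1}\le 0$. Second, the sector $\frac{2-p}{p}y_1\le y_2<y_1$ corresponds in $x$-coordinates to $x_2\ge \frac{1}{p-1}x_1$, i.e.\ $s=x_2/x_1\in[\tfrac{1}{p-1},\infty)$, not $[0,\tfrac{1}{p-1}]$; your proposed finite splitting of $[0,\tfrac{1}{p-1}]$ therefore addresses the wrong (implicit) region, and on the correct unbounded interval the Lemma~\ref{2_2glue3} style of argument does not transfer directly. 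The paper avoids this by staying in $y$-coordinates: setting $f_1=y_1^2+2\gamma y_1y_2+y_2^2$, $f_2=(p-2)(y_2+\gamma y_1)^2+f_1$, $f_3=y_1-y_2$, it checks $M_{y_2y_2}\le 0$ at the endpoint $y_2=\frac{2-p}{p}y_1$ (Lemma~\ref{case2_2partialswhenplessthan2}) and then shows $f_3^{2-p}f_2/f_1^{(4-p)/2}$ is monotone in $y_2$ on $[\frac{2-p}{p}y_1,y_1]$ (Lemma~\ref{case2_2glueingto3_2}). The restriction $|\tau|\le\frac12$ enters through Lemma~\ref{firstsolutionplessthan2}, which shows $M_{y_2y_2}(y_1,-y_1,y_3)>0$ for $|\tau|\le\frac12$, confirming that the explicit piece genuinely fails concavity outside its sector and must be glued to the Case~$(3_2)$ piece.
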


Most of the remaining cases do not yield a Bellman function candidate.  If we fix $\ytwo$ then the Monge--Amp\`ere solution from Cases $(1)$ and $(3)$ do not satisfy the restrictive concavity needed to be a Bellman function candidate.  Case $(2)$ yields the same partial solution if we first fix $\yone$ or $\ytwo,$ since restrictive concavity is only valid on part of the domain.  So, all that remains is Case $(4).$  However, we do not know whether or not Case $(4)$ gives a Bellman function candidate.  For $\tau = 0,$ it was shown in \cite{VaVo3} that Case $(4)$ does not produce a Bellman function candidate, since some simple extremal functions give a contradiction to linearity of the Monge--Amp\`ere solution on characteristics.  However, for $\tau \neq 0$ these extremal functions only work as a counterexample for some $p-$values and some signs of the martingale transform.  Case $(4)$ could give a solution throughout $\Xi_+$ or could yield a partial solution that would work well with the characteristics from Case $(2_1).$  Since Case $(4)$ does not provide a Bellman candidate for $\tau =0$, we expect the same for small $\tau.$  The picture probably changes most drastically for large $\tau.$  But it does not matter, since we will now show that our Bellman candidate is actually the Bellman function (which we would have to check anyways because of the added assumption).  The details for the remaining cases that do not yield a Bellman function candidate are in Addendum 2 (Section \ref{addendum}).

\section{\bf The Monge--Amp\`ere solution is the Bellman function}\label{showingwehavetrueBellmanfunction}
We will now show that the Monge--Amp\`ere solution obtained in Proposition \ref{solution1_2and2_2glue} and \ref{solutionsfrom2_2and3_2} is actually the Bellman function.  To this end, let us revert back to the $x-$variables.  We will denote the Bellman function candidate as $B_\tau$ and use $\B_\tau$ to denote the true Bellman function.  Extending the function $G$ on part of $\Omega_+$ to $U_\tau$ on all of $\Omega,$ appropriately, makes it possible to define the solution in terms of a single relation.  

\begin{defi}
Let  $v(x,y) := v_{p, \tau}(x, y) = (\tau^2|x|^2+|y|^2)^{\frac p2}-((p^*-1)^2+\tau^2)^{\frac p2}|x|^p,$ $u(x,y) := u_{p, \tau}(x, y) = p(1-\frac{1}{p*})^{p-1}\left(1+\frac{\tau^2}{(p^*-1)^2}\right)^{\frac{p-2}{2}}(|x|+|y|)^{p-1}[|y|-(p^*-1)|x|]$ and 
\begin{displaymath}
  U(x,y) :=  U_{p,\tau}(x, y) = \left\{
     \begin{array}{lr}
       v(x, y) & : |y| \geq (p^*-1)|x|\\
       u(x, y) & : |y| \leq (p^*-1)|x|.
     \end{array}
   \right.
\end{displaymath}
for $1<p<2.$  For $2<p<\infty$ we interchange the two pieces in $U.$ 
\end{defi}
\begin{prop}\label{unifyingsolution}
For $1<p<2$ and $|\tau|\leq \frac 12$ or $2<p<\infty$ and $\tau \in \R$ the Bellman function candidate is the unique positive solution given by 

\beno
U(x_1, x_2) = U\left(x_3^{\frac 1p}, \sqrt{B_\tau^{\frac 2p}-\tau^2x_3^{\frac 2p}}\right).
\eeno 
Furthermore, $U$ is $C^1-$smooth on $\Omega.$
\end{prop}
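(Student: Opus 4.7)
The plan is to pass to the $x$-coordinates, reinterpret the two-piece candidate from Propositions~\ref{solution1_2and2_2glue}--\ref{solutionsfrom2_2and3_2} as the two branches of $U$, and then extract both uniqueness and $C^1$-smoothness from the monotonicity and regularity of $U$ in its second argument.

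First I would convert to $x$-coordinates via $y_1 - y_2 = x_1$, $y_1 + y_2 = x_2$, using the algebraic identity $(1+\tau^2)(y_1^2 + 2\gamma y_1 y_2 + y_2^2) = \tau^2 x_1^2 + x_2^2$ to rewrite the explicit piece as $B_\tau(x) = v(x_1,x_2) + ((p^*-1)^2+\tau^2)^{p/2}\, x_3$, and to see that the junction $y_2 = \frac{p-2}{p}y_1$ (resp.\ $\frac{2-p}{p}y_1$) corresponds exactly to $|x_2| = (p^*-1)|x_1|$. On the explicit branch, the relation $Z^2 + \tau^2 X^2 = B_\tau^{2/p}$ reduces the identity $U(x_1,x_2) = U(X,Z)$ to $v(x_1,x_2) = v(X,Z)$, which is a direct substitution; the consistency of the branch assignment amounts to the observation that the inequalities $|Z|$ vs.\ $(p^*-1)X$ and $|x_2|$ vs.\ $(p^*-1)|x_1|$ are both equivalent to $B_\tau$ vs.\ $((p^*-1)^2+\tau^2)^{p/2} x_3$. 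On the implicit branch I would recognize that $u(x,y) = c_{p,\tau}\, G(|y|,|x|)$ for a positive constant $c_{p,\tau}$, and use $\sqrt{\omega^2-\tau^2} = Z/X$ together with $x_3 = X^p$ to reduce the defining relation $G(x_2,x_1) = x_3\, G(\sqrt{\omega^2-\tau^2},1)$ to $u(x_1,x_2) = u(X,Z)$.

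For uniqueness I would compute $\partial_Z v = p Z(\tau^2 X^2+Z^2)^{(p-2)/2}$ and $\partial_Z u = c_{p,\tau}\, p Z(X+Z)^{p-2}$, where the simplification of $\partial_Z u$ uses the key identity $(p-1)(p^*-1) = 1$. Both are positive for $Z > 0$, and a short calibration shows they agree at the junction $Z = (p^*-1)X$, so $Z \mapsto U(X,Z)$ is $C^1$ and strictly increasing on $[0,\infty)$. Since $U(x_1,x_2)$ lies in the range of $U(X,\cdot)$ on all of $\Omega$, with equality at the lower endpoint only on the Dirichlet edge $\{x_2 = 0,\, x_3 = |x_1|^p\}$, the equation $U(X,Z) = U(x_1,x_2)$ admits a unique $Z \geq 0$ and hence a unique positive $B_\tau = (Z^2 + \tau^2 X^2)^{p/2}$. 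For $C^1$-smoothness of $B_\tau$ on $\Omega$, the same derivative-matching argument applied to $\partial_X$ yields $U \in C^1$; then the implicit function theorem applied to $F(x_1,x_2,x_3,B) := U(x_1,x_2) - U(X,\sqrt{B^{2/p}-\tau^2 X^2})$ gives $B_\tau \in C^1$ wherever $Z > 0$. The exceptional set $\{Z = 0\}$ reduces to the one-dimensional curve $\{(x_1,0,|x_1|^p)\}$, where the Dirichlet boundary data $B_\tau = |\tau|^p |x_1|^p$ from Proposition~\ref{Bellmanprop}(ii) glues $C^1$-smoothly to the implicit piece.

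The main obstacle is the junction matching: establishing that $U$ is globally $C^1$ across the boundary $|Z|=(p^*-1)X$. Every algebraic step hinges on the single identity $(p-1)(p^*-1)=1$; without it, neither $\partial_Z$ nor $\partial_X$ of the two branches would coincide at the junction, and the implicit function theorem would fail precisely at the interface between the explicit and implicit pieces of $B_\tau$, preventing the unified formulation of Proposition~\ref{unifyingsolution}.
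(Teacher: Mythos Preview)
Your approach mirrors the paper's almost exactly: translate the two-piece candidate into $x$-coordinates, verify $C^1$-smoothness of $U$ by matching the partials of $u$ and $v$ at the junction $|y|=(p^*-1)|x|$, and deduce uniqueness of $B_\tau$ from strict monotonicity of $Z\mapsto U(X,Z)$. The paper additionally records $U_x<0$, but your argument from $U_y>0$ alone already suffices; and your implicit-function-theorem paragraph on $C^1$-smoothness of $B_\tau$ goes beyond what this proposition claims (the paper defers that to Lemma~\ref{smoothnessonline}).

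One correction is needed. The identity $(p-1)(p^*-1)=1$ holds only for $1<p\le 2$, where $p^*-1=\tfrac{1}{p-1}$; for $p>2$ one has $p^*=p$ and $(p-1)(p^*-1)=(p-1)^2$. Consequently your simplified formula $\partial_Z u=c_{p,\tau}\,pZ(X+Z)^{p-2}$ is valid only in the lower range. For $p>2$ the honest computation gives
\[
\partial_Z u=\alpha_{p,\tau}(X+Z)^{p-2}\bigl[pZ-p(p-2)X\bigr],
\]
which is still strictly positive on the region $Z\ge(p-1)X$ where $u$ is in force, so the monotonicity and the junction-matching both survive. The slip is therefore harmless for the argument, but your closing sentence elevating $(p-1)(p^*-1)=1$ to the single identity on which everything hinges should be dropped.
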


\begin{proof}
First consider $2 \leq p<\infty.$   It is clear that 
\be \label{unifyingrelation}
U(x_1, x_2) = U\left(x_3^{\frac 1p}, \sqrt{B_\tau^{\frac 2p}-\tau^2x_3^{\frac 2p}}\right),
\ee
by comparing the solution obtained in Proposition \ref{solution1_2and2_2glue} and using the symmetry property in Proposition \ref{Bellmanprop}.  The constant $\alpha_{p,\tau} = p(1-\frac{1}{p^*})^{p-1}\left(1+\frac{\tau^2}{(p^*-1)^2}\right)^{\frac{p-2}{2}}$ was determined so that $U_{x} = U_{y}$ at $|y| = (p^*-1)|x|.$  The partial derivatives are given by, 
\beno
u_x &=& \alpha_{p, \tau}(p-1)x'(|x|+|y|)^{p-2}(|y|-(p^*-1)|x|)-\alpha_{p, \tau}(p^*-1)x'(|x|+|y|)^{p-1},\\
v_x &=& p\tau^2 x(\tau^2|x|^2+|y|^2)^{\frac{p-2}{2}}-px'((p^*-1)^2+\tau^2)^{\frac p2}|x|^{p-1},\\
u_y &=& \alpha_{p,\tau}(p-1)y'(|x|+|y|)^{p-2}(|y|-(p^*-1)|x|)+\alpha_{p,\tau}y'(|x|+|y|)^{p-1},\\
v_y &=& py(\tau^2|x|^2+|y|^2)^{\frac{p-2}{2}},\\
\eeno
where $x' = \frac{x}{|x|}$ and $y' = \frac{y}{|y|}.$  $U$ is $C^1-$smooth, except possibly at gluing and symmetry lines.  It is easy to verify that $u_x$ is continuous at $\{x=0\},$ $U_x$ and $U_y$ are continuous at $\{|y| = (p^*-1)|x|\}$ and $v_y$ is continuous at $\{y=0\}.$  This proves that $U$ is $C^1-$smooth on $\Omega.$  

Observe that $U_{y}>0$ for $y\neq 0$ and $U_{x}<0$ for $x \neq 0.$  This is enough to show that $B_\tau$ is the unique positive solution to (\ref{unifyingrelation}).  Indeed, if $x\in \Omega$ such that $|x_1| = x_3^{\frac 1p},$ then $\sqrt{B_\tau^{\frac 2p}-\tau^2x_3^{\frac 2p}} = |x_2|$ by the Dirichlet boundary conditions.  This gives us (\ref{unifyingrelation}) uniquely at $B_\tau(x).$  Fix $x_1,$ such that $|x_1| < x_3^{\frac 1p},$ then $U\left(x_3^{\frac 1p}, \sqrt{B_\tau^{\frac 2p}-\tau^2x_3^{\frac 2p}}\right) < U\left(x_1, \sqrt{B_\tau^{\frac 2p}-\tau^2x_3^{\frac 2p}}\right).$  Since $x_1$ is fixed, then $\sqrt{B_\tau^{\frac 2p}-\tau^2x_3^{\frac 2p}} >|x_2|,$ so $U\left(x_1, \sqrt{B_\tau^{\frac 2p}-\tau^2x_3^{\frac 2p}}\right)$ strictly decreases to $U(x_1, x_2),$ as $\sqrt{B_\tau^{\frac 2p}-\tau^2x_3^{\frac 2p}}$ decreases to $|x_2|,$ giving us a unique $B_\tau(x)$ for which (\ref{unifyingrelation}) holds. 

Now consider $1<p<2.$  $U$ is $C^1-$smooth on $\Omega,$ since $v_x$ is continuous at $\{x=0\},$ $u_y$ is continuous at $\{y=0\}$ and $U_x$ and $U_y$ are continuous at $\{|y| = (p^*-1)|x|\}.$  This is easily verified since the partial derivatives are computed above (just switch the two pieces of each function).  Observe that for $x\neq 0$ and $y\neq 0, U_{x}<0$ and for $y\neq 0, U_{y}>0.$  Then the argument above showing $U(x_1, x_2) = U\left(x_3^{\frac 1p}, \sqrt{B_\tau^{\frac 2p}-\tau^2x_3^{\frac 2p}}\right)$ uniquely determines $B_\tau$ also holds for this range of $p-$values as well, except maybe at $x_1 = x_2 = 0.$  Suppose $U(0,0) = U\left(x_3^{\frac 1p}, \sqrt{B_\tau^{\frac 2p}-\tau^2x_3^{\frac 2p}}\right),$ then $B_\tau(x) = ((p^*-1)^2+\tau^2)^{\frac p2}x_3.$  So $B_\tau(x)$ is uniquely determined by the fixed $x-$value.  \qedhere
\end{proof}
\begin{cor}\label{Bellmancontinuity}
$B_\tau$ is continuous in $\Omega.$
\end{cor}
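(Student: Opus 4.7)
The plan is to deduce continuity of $B_\tau$ directly from the implicit characterization given in Proposition \ref{unifyingsolution}. Define
\beno
\Phi(x, B) := U\!\left(x_3^{\frac 1p}, \sqrt{B^{\frac 2p}-\tau^2 x_3^{\frac 2p}}\right) - U(x_1, x_2)
\eeno
for $x \in \Omega$ and $B \geq \tau^2 x_3$ (so the square root is real). By Proposition \ref{unifyingsolution}, $B_\tau(x)$ is the unique positive value of $B$ at which $\Phi(x, B) = 0$. Since $U \in C^1(\Omega)$, the map $\Phi$ is continuous jointly in $(x, B)$ wherever the radicand is nonnegative.

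First I would establish local boundedness of $B_\tau$. Fix a compact set $K \subset \Omega$. The Dirichlet boundary data from Proposition \ref{Bellmanprop}(ii) provides an explicit value of $B_\tau$ on $\{|x_1|^p = x_3\}$, and the monotonicity of $\Phi(x, \cdot)$ (coming from $U_y > 0$ for $y \neq 0$, which is the same ingredient used for uniqueness in Proposition \ref{unifyingsolution}) together with the fact that $U(x_3^{1/p}, \cdot)$ is increasing implies $B_\tau$ is bounded above in terms of $|x_1|, |x_2|, x_3$. Combined with the trivial lower bound $B_\tau(x) \geq \tau^2 x_3$ (forced by reality of the square root and uniqueness), we get a uniform bound on $B_\tau(K)$.

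Second, I would argue by sequential continuity. Given $x^{(n)} \to x^{(0)}$ in $\Omega$, the sequence $\{B_\tau(x^{(n)})\}$ is bounded by the previous step, so it has convergent subsequences; let $B^*$ be any subsequential limit. Passing to the limit in $\Phi(x^{(n_k)}, B_\tau(x^{(n_k)})) = 0$ using joint continuity of $\Phi$ yields $\Phi(x^{(0)}, B^*) = 0$. The uniqueness clause of Proposition \ref{unifyingsolution} then forces $B^* = B_\tau(x^{(0)})$. Since every subsequential limit equals $B_\tau(x^{(0)})$, the full sequence converges, proving continuity.

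The main obstacle is the handling of the degenerate loci where the argument $\sqrt{B_\tau^{2/p} - \tau^2 x_3^{2/p}}$ vanishes or where one of $x_1, x_2$ in $U(x_1, x_2)$ equals zero; at such points the strict monotonicity of $\Phi(x,\cdot)$ degenerates and the joint continuity of $\Phi$ must be inspected carefully through the piecewise definition of $U$ across the gluing line $|y| = (p^*-1)|x|$. The key observation that resolves this is that $U$ is $C^1$ on all of $\Omega$ (proved explicitly in Proposition \ref{unifyingsolution} by checking matching of partial derivatives across both gluing and symmetry lines), so no singularity is introduced by the piecewise structure. On the remaining degenerate set (the boundary $\{|x_1|^p = x_3\}$ and the axes $\{x_1 = 0\}$ or $\{x_2 = 0\}$), the Dirichlet and symmetry properties from Proposition \ref{Bellmanprop} give an explicit closed-form expression for $B_\tau$, which is manifestly continuous and matches the limit obtained from the interior by the subsequence argument.
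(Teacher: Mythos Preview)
Your approach is correct and genuinely different from the paper's. The paper does not run an implicit-function/compactness argument; instead it observes that on the implicit region the identity $U_{p,\tau}=\bigl(1+\tfrac{\tau^2}{(p^*-1)^2}\bigr)^{(p-2)/2}U_{p,0}$ forces the closed-form relation
\[
B_\tau=\bigl(B_0^{2/p}+\tau^2 x_3^{2/p}\bigr)^{p/2},
\]
and then imports continuity of the Burkholder function $B_0$ from \cite{VaVo3}. On the explicit region $B_\tau$ is given by a closed formula and is trivially continuous. So the paper's proof is essentially a reduction to the $\tau=0$ case already in the literature.

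Your route is more self-contained: it does not invoke continuity of $B_0$ from an outside source, and it works uniformly across the implicit and explicit regions without needing the relation to $B_0$. The cost is that you have to manage the degenerate loci by hand, and your local-boundedness step, while correct in spirit, needs a little more than you wrote: the uniform upper bound on a compact $K$ follows because $U_x<0$ and $U(a,y)\to\infty$ as $y\to\infty$, so one can dominate $U(x_3^{1/p},\cdot)$ from below by $U(C^{1/p},\cdot)$ with $C=\max_K x_3$. Also a minor slip: the lower bound forced by reality of the radicand is $B_\tau\ge|\tau|^p x_3$ (equivalently $B_\tau^{2/p}\ge\tau^2 x_3^{2/p}$), not $B_\tau\ge\tau^2 x_3$. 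Neither of these affects the validity of the argument.
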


\begin{proof}
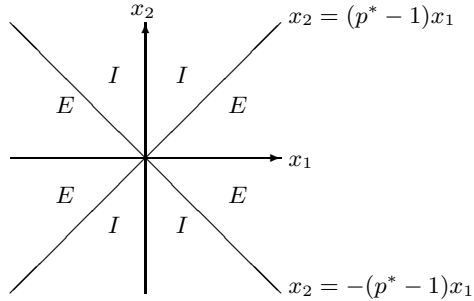
\begin{figure}[h]
\setlength{\unitlength}{0.02cm}
\begin{picture}(200,200)(0,0)
\thinlines
\put(90,10){\vector(0,1){180}} 
\put(0,100){\vector(1,0){180}} 
\put(90,100){\line(1,1){90}} 
\put(90,100){\line(-1,1){90}}
\put(90,100){\line(1,-1){90}} 
\put(90,100){\line(-1,-1){90}}
\put(185,190){\footnotesize $x_2 = (p^*-1)x_1$}
\put(185,10){\footnotesize $x_2 = -(p^*-1)x_1$}
\put(80,195){\footnotesize $x_2$}
\put(185,95){\footnotesize $x_1$}

\put(145,70){\footnotesize $E$}
\put(145,130){\footnotesize $E$}
\put(30,70){\footnotesize $E$}
\put(30,130){\footnotesize $E$}

\put(110,150){\footnotesize $I$}
\put(65,150){\footnotesize $I$}
\put(110,50){\footnotesize $I$}
\put(65,50){\footnotesize $I$}

\end{picture}
\caption{Location of Implicit (I) and Explicit (E) part of $B_\tau$ for $2\leq p <\infty.$}
\label{1_2}
\end{figure}

In this proof only we will revert back to the notation $U_{p, \tau},$ rather than $U,$ to make clear the distinction when $\tau = 0$ or $\tau \neq 0.$  We only consider $2<p<\infty$ as the dual range is handled identically.  By Proposition \ref{unifyingsolution}, we have that $B_\tau$ is the unique positive solution to \ref{unifyingrelation}.  Since this is true for all $\tau \in \R,$ then $B_0=\left(B_\tau^{\frac 2p}-\tau^2x_3^{\frac 2p}\right)^{\frac p2}$ on $|x_2| \geq (p^*-1)|x_1|,$ since $U_{p,\tau} =\left(1+ \frac{\tau^2}{(p^*-1)^2}\right)^{\frac{p-2}{2}}U_{p,0}.$  Equivalently, we have 
\be \label{Burkholderrelation}
B_\tau=\left(B_0^{\frac 2p}+\tau^2x_3^{\frac 2p}\right)^{\frac p2}.
\ee
Since $B_0$ was shown to be continuous in \cite{VaVo3} (pg. 26) then $B_\tau$ is also continuous on $|x_2| \geq (p^*-1)|x_1|,$ using the relation.  This takes care of the implicit part of $B_\tau.$  The explicit part of $B_\tau$ is clearly continuous on $|x_2| \leq (p^*-1)|x_1|.$ \qedhere
\end{proof}

\begin{lemma}\label{smoothnessonline}
Let $1<p<\infty$.  Then, $B_\tau\big|_L$ is $C^1-$smooth on $\Omega,$ where $L$ is any line in $\Omega.$
\end{lemma}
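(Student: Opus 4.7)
The plan is to deduce this from the global $C^1$--smoothness of $B_\tau$ on $\Omega$, which was already established in Proposition \ref{unifyingsolution}. Given a line $L$ in $\R^3$ meeting $\Omega$, parametrize it as $x(t) = x_0 + t v$ for some $v \in \R^3 \setminus \{0\}$. The set $\{t : x(t) \in \Omega\}$ is an open subset of $\R$, and at every $t$ at which $x(t)$ avoids the piecewise boundaries of $B_\tau$ the chain rule gives
\beno
\frac{d}{dt}\bigl(B_\tau \circ x\bigr)(t) = \nabla B_\tau(x(t)) \cdot v.
\eeno
Continuity of this expression in $t$ is immediate from continuity of $\nabla B_\tau$.

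The only places where this needs extra care are those $t$ at which $x(t)$ lands on one of the gluing loci $\{|x_2| = (p^*-1)|x_1|\}$ or on the symmetry loci $\{x_1 = 0\}$, $\{x_2 = 0\}$. I would organize the argument into three sub-cases: (a) $L$ sits entirely in one piece of the piecewise definition, in which case $B_\tau|_L$ is in fact $C^\infty$; (b) $L$ meets the gluing/symmetry loci transversally at isolated points; (c) $L$ is contained in one of these loci. In case (b) the matching of the gradients of $u$ and $v$ at $\{|x_2| = (p^*-1)|x_1|\}$ (which is precisely why the constant $\alpha_{p,\tau}$ was chosen as it was) together with the continuity of $u_x$ at $\{x_1 = 0\}$ and of $v_y$ at $\{x_2 = 0\}$ --- both verified in the proof of Proposition \ref{unifyingsolution} --- guarantees that the one-sided derivatives of $B_\tau \circ x$ at the crossing times agree. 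In case (c) $B_\tau|_L$ is given by a single smooth formula along $L$ and the conclusion is immediate.

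The main obstacle in principle is the possibility of a corner in $B_\tau\circ x$ at a transversal crossing of one of the piecewise boundaries, but the gradient-matching checks carried out in the proof of Proposition \ref{unifyingsolution} rule this out. No new smoothness computation is required, and the lemma follows as a direct corollary of the global $C^1$--smoothness of $B_\tau$ on $\Omega$ via the chain rule.
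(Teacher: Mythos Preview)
Your proposal conflates two different functions. Proposition~\ref{unifyingsolution} establishes that the two-variable function $U$ is $C^1$-smooth; it does \emph{not} establish that the three-variable Bellman candidate $B_\tau$ is $C^1$ on $\Omega$. The candidate $B_\tau$ is only defined implicitly through the relation $U(x_1,x_2)=U\bigl(x_3^{1/p},\sqrt{B_\tau^{2/p}-\tau^2 x_3^{2/p}}\bigr)$, so knowing that $\nabla U$ is continuous does not immediately hand you a continuous $\nabla B_\tau$ to feed into the chain rule. The gradient-matching you invoke --- $u_x=v_x$, $u_y=v_y$ on $\{|y|=(p^*-1)|x|\}$, continuity of $u_x$ at $\{x=0\}$, of $v_y$ at $\{y=0\}$ --- all concern the pieces of $U$, not of $B_\tau$; the passage from one to the other is precisely what the lemma has to supply.

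The paper's proof is exactly that missing bridge. Rather than asserting that $\nabla B_\tau$ exists globally, it substitutes the parametrized line $L(t)$ into the implicit relation \eqref{unifyingrelation} and differentiates the resulting identity in $t$. Because $U$ is $C^1$ (this is where Proposition~\ref{unifyingsolution} enters), the differentiated relation has the same one-sided limits as $t\to 0^\pm$, which forces $\frac{d}{dt}B_\tau(L(t))\big|_{t=0^-}=\frac{d}{dt}B_\tau(L(t))\big|_{t=0^+}$. Your outline can be repaired either by carrying out this implicit-differentiation step, or by invoking the implicit function theorem (using $U_y>0$ for $y\neq 0$) to first deduce $C^1$-smoothness of $B_\tau$ itself; as written, however, it assumes what is to be proved.
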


\begin{proof}
Since $B_\tau\big|_L$ is $C^2-$smooth on $\Omega_+,$ all that remains to be checked is the smoothness at the gluing and symmetry lines, i.e. at $\{x_1 = 0\}, \{x_2 = 0\}$ and $\{|x_2| = (p^*-1)|x_1|\}.$  Let $L = L(t), t \in \R,$ be any line in $\Omega$ passing through any of the planes in question, such that $L(0)$ is on the plane.  Now plug $L(t)$ into (\ref{unifyingrelation}) and differentiate with respect to $t.$  Let $t \rightarrow 0^+$ and $t \rightarrow 0^-$ and equate the two relations.  This gives 
\beno
\frac{d}{dt}B_\tau(L(t))\big|_{t = 0^-}= \frac{d}{dt}B_\tau(L(t))\big|_{t = 0^+}.\qedhere
\eeno 
\end{proof}

\begin{prop}\label{candidateconcavity}
(Restrictive Concavity) Let $1<p<2$ and $|\tau| \leq \frac 12$ or $2 \leq p < \infty$ and $\tau \in \R.$  Suppose $x^{\pm} \in \Omega$ such that $x = \al^+ x^+ + \al^- x^-, \al^+ +\al^- =1.$  If $|x_1^+ -x_1^-| = |x_2^+ -x_2^-|$ then $B_\tau(x) \geq \al^+ B_\tau(x^+) + \al^- B_\tau(x^-).$
\end{prop}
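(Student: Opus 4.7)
The plan is to pass to the $y$-coordinates introduced after Proposition \ref{concavity}. In those coordinates the hypothesis $|x_1^+ - x_1^-| = |x_2^+ - x_2^-|$ forces the segment $[y^-, y^+]$ to lie either in a plane where $y_1$ is constant or in a plane where $y_2$ is constant, so restrictive concavity of $B_\tau$ is equivalent to $M$ being concave on every line segment in each of these two families of 2D slices. By the symmetry of $M$ (Proposition \ref{Bellmanprop}) it suffices to work on $\Xi_+$. The candidate $M$ is $C^1$ globally by Lemma \ref{smoothnessonline} but only piecewise $C^2$, so the strategy is to reduce 2D concavity to 1D concavity along an arbitrary segment $L(t)$ in one of the distinguished planes and verify that $h(t) := M(L(t))$ is concave.

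On the interior of each gluing piece the 2D Hessian criterion in Proposition \ref{D_i} will give $h'' \leq 0$. For the implicit piece (Case $(1_2)$ if $2 < p < \infty$, Case $(3_2)$ if $1 < p < 2$) the identity $D_2 = 0$ holds by construction, $D_1 \geq 0$ is Lemma \ref{D1whenpgreaterthan2}, and the sign bounds $M_{y_1 y_1}, M_{y_2 y_2}, M_{y_3 y_3} \leq 0$ come from Lemma \ref{My3y3pgreaterthan2} (with the analogous statement for the dual range and $|\tau| \leq \frac{1}{2}$ supplied by the argument in Section \ref{addendum1}). For the explicit piece (Case $(2_2)$) the structure $M_{y_3 y_3} = M_{y_3 y_i} = 0$ makes each $D_i$ vanish automatically, so only the signs $M_{y_1 y_1}, M_{y_2 y_2} \leq 0$ matter; these come from Lemma \ref{2_2glue3} together with the comparison $M_{y_2 y_2} \leq M_{y_1 y_1}$ on the relevant sector, which can be read off directly from the closed form \eqref{alternateexplicitform}.

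Next, the gluing surface $\{y_2 = \frac{p-2}{p} y_1\}$ (respectively $\{y_2 = \frac{2-p}{p} y_1\}$ in the dual range) and the symmetry lines $\{y_1 = \pm y_2\}$ are handled using the $C^1$-smoothness of $M$ along any line (Lemma \ref{smoothnessonline}; on the symmetry lines this is the content of the Neumann data in Proposition \ref{Bellmanprop}). In either distinguished 2D plane, an arbitrary line segment crosses these singular surfaces in only finitely many points. On each resulting subsegment $h$ is $C^2$ with $h'' \leq 0$, while at the crossings $h'$ remains continuous. Consequently $h'$ is globally continuous and piecewise monotone non-increasing, hence monotone non-increasing on the whole segment, so $h$ is concave. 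Translating back to $x$-coordinates gives the desired inequality $B_\tau(x) \geq \alpha^+ B_\tau(x^+) + \alpha^- B_\tau(x^-)$.

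The main obstacle is precisely the gluing: $M$ is not $C^2$ across $\{y_2 = \frac{p-2}{p} y_1\}$, so the Hessian criterion in Proposition \ref{D_i} cannot be applied globally. What unlocks the argument is the reduction to a 1D question along segments in these two specific planes, after which $C^1$-smoothness plus piecewise concavity suffices. The restriction $|\tau| \leq \frac{1}{2}$ in the range $1 < p < 2$ enters only through the sign verification of $M_{y_1 y_1}$ on the explicit piece; beyond that threshold the candidate loses its restrictive concavity and a different construction is required, as discussed in Section \ref{addendum}.
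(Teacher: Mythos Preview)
Your proposal is correct and follows essentially the same route as the paper: verify the Hessian criterion of Proposition~\ref{D_i} on each smooth piece (this is exactly what Propositions~\ref{solution1_2and2_2glue} and~\ref{solutionsfrom2_2and3_2} package), then handle the gluing and symmetry surfaces by restricting to a line, invoking the $C^1$-smoothness of Lemma~\ref{smoothnessonline}, and noting that a $C^1$ function that is concave on each side of a point is concave across it. The paper's proof is terser but the logic is identical; one small slip in your commentary is that for $1<p<2$ the critical sign to check on the explicit piece is $M_{y_2 y_2}$ rather than $M_{y_1 y_1}$ (compare Lemmas~\ref{case2_2partialswhenplessthan2}--\ref{firstsolutionplessthan2}), though this does not affect the argument.
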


\begin{proof}
Recall that Propositions \ref{solution1_2and2_2glue} and \ref{solutionsfrom2_2and3_2}, together with the symmetry property of $B_\tau,$ establish this result everywhere, except at $\{x_1 = 0\}, \{x_2 = 0\}$ and $\{|x_2| = (p^*-1)|x_1|\}.$  Let $f(t) = B_\tau\big|_{L(t)},$ where $L$ is any line in $\Omega,$ such that $L(0) \in\{x_1 = 0\}, \{x_2 = 0\}$ or $\{|x_2| = (p^*-1)|x_1|\}.$  Since $f''<0$ for $t<0$ and $t>0$ and $f$ is $C^1-$smooth (by Lemma \ref{smoothnessonline}), then $f$ is concave.\qedhere
\end{proof}

\begin{prop}\label{Bellmanupperbound}
Let $1<p<\infty.$  If a function $\widetilde{B}$ has restrictive concavity and $\widetilde{B}_\tau(x_1, x_2, |x_1|^p) \geq (\tau^2 x_1^2 +x_2^2)^{\frac p2},$ then $\widetilde{B}_\tau \geq \B_\tau.$  In particular, $B_\tau \geq \B_\tau.$
\end{prop}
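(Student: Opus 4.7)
My plan is the standard Bellman induction. Fix an admissible pair of test functions $f,g$ on $I$ with $\fI=x_1$, $\gI=x_2$, $\fpI=x_3$, and $|(f,h_J)|=|(g,h_J)|$ for every $J\in\D$; it suffices to show $\widetilde{B}(x)\ge\ftpI$, since taking the supremum then yields $\widetilde{B}\ge\B_\tau$. The ``in particular'' assertion is then immediate, because $B_\tau$ is restrictively concave by Proposition \ref{candidateconcavity} and matches the Dirichlet data with equality by Proposition \ref{Bellmanprop}(ii).

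First I would reduce to simple pairs piecewise constant on some dyadic level $\D_N$ by taking conditional expectations $f_N:=\E[f\mid\sigma(\D_N)]$ and $g_N:=\E[g\mid\sigma(\D_N)]$; these keep the top averages $\fI$, $\gI$ unchanged and still satisfy $|(f_N,h_J)|=|(g_N,h_J)|$ for every $J\in\D$ (Haar coefficients at generations strictly less than $N$ are preserved, while all coefficients from generation $N$ onwards vanish on both sides). For such a simple pair, at each dyadic node $J$ with children $J^\pm$, the identity $|(f_N,h_J)|=|(g_N,h_J)|$ translates, exactly as in the proof of Proposition \ref{concavity}, to $|\langle f_N\rangle_{J^+}-\langle f_N\rangle_{J^-}|=|\langle g_N\rangle_{J^+}-\langle g_N\rangle_{J^-}|$, and restrictive concavity of $\widetilde{B}$ applies:
\[
\widetilde{B}(x^J)\;\ge\;\alpha^+\widetilde{B}(x^{J^+})+\alpha^-\widetilde{B}(x^{J^-}),
\qquad x^J:=(\langle f_N\rangle_J,\langle g_N\rangle_J,\langle|f_N|^p\rangle_J).
\]
Iterating from $I$ down to $\D_N$ gives $\widetilde{B}(x^I)\ge\tfrac{1}{|I|}\sum_{J\in\D_N}|J|\,\widetilde{B}(x^J)$. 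Since $f_N$ is constant on each leaf $J\in\D_N$, the identity $\langle|f_N|^p\rangle_J=|\langle f_N\rangle_J|^p$ places $x^J$ on the Dirichlet face $\{x_3=|x_1|^p\}$, so the boundary hypothesis gives $\widetilde{B}(x^J)\ge(\tau^2\langle f_N\rangle_J^2+\langle g_N\rangle_J^2)^{p/2}=\langle(g_N^2+\tau^2 f_N^2)^{p/2}\rangle_J$ (the integrand being constant on $J$), and summing yields
\[
\widetilde{B}(x_1,x_2,\langle|f_N|^p\rangle_I)\;\ge\;\langle(g_N^2+\tau^2 f_N^2)^{p/2}\rangle_I.
\]

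The final step is to pass to the limit $N\to\infty$. Restrictive concavity taken with the trivial choice $x_1^\pm=x_1$, $x_2^\pm=x_2$, $x_3^\pm\ne x_3$ produces ordinary concavity of $\widetilde{B}(x_1,x_2,\cdot)$ in its third slot, and hence continuity on the relative interior $\{x_3>|x_1|^p\}$. By martingale convergence, $f_N\to f$ and $g_N\to g$ in $L^p$, so the right-hand side tends to $\ftpI$ while $\langle|f_N|^p\rangle_I\to x_3$; continuity then forces the left-hand side to $\widetilde{B}(x)$. The degenerate case $x_3=|x_1|^p$ is trivial: it forces $f\equiv x_1$, so $f_N=f$ for every $N$ and the estimate holds with no limit. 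Taking the supremum over $(f,g)$ completes the proof. The main delicate point I anticipate is exactly this limit, but since concavity in the $x_3$ slot is a free consequence of restrictive concavity, no extra regularity hypothesis on $\widetilde{B}$ is required.
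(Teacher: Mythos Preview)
Your argument is the standard Bellman induction and is correct; it is precisely the argument from \cite{VaVo3} that the paper invokes by reference, so you have essentially reproduced the paper's proof. One small remark: since $\langle|f_N|^p\rangle_I$ increases to $x_3$, you are using continuity of the concave function $\widetilde{B}(x_1,x_2,\cdot)$ from the left at an interior point $x_3>|x_1|^p$, which is exactly what concavity guarantees---you've handled the boundary case $x_3=|x_1|^p$ separately, so everything is in order.
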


\begin{proof}
This was proven in \cite{VaVo3} for $B_0$ (Lemma 2 on page 29).  The same proof will apply here to $B_\tau.$ \qedhere
\end{proof}

\begin{prop}\label{Bellmanlowerbound}
For $1<p<\infty, B_\tau \leq \B_\tau.$
\end{prop}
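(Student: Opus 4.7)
The plan is to establish $B_\tau(x) \leq \B_\tau(x)$ by constructing, for each interior point $x \in \Omega$ and each $\e > 0$, admissible test functions $f, g$ on $I = [0,1]$ with $\fI = x_1$, $\gI = x_2$, $\fpI = x_3$, and $|(f, h_J)| = |(g, h_J)|$ for every $J \in \D$, that achieve $\ftpI \geq B_\tau(x) - \e$. Since $\B_\tau(x)$ is by definition the supremum over such admissible pairs, this immediately yields $\B_\tau(x) \geq B_\tau(x) - \e$; letting $\e \to 0$ gives the claim. On the boundary $\{x_3 = |x_1|^p\}$ the inequality is immediate, since the constant functions $f \equiv x_1,\, g \equiv x_2$ satisfy all requirements and $\ftpI = (\tau^2 x_1^2 + x_2^2)^{p/2} = B_\tau(x)$ by the Dirichlet data.

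The construction rests on the fact that $B_\tau$ was built as a Monge--Amp\`ere solution and is therefore affine along each characteristic of Proposition \ref{Pogorelov}. In $(y_1, y_2, y_3)$-coordinates every characteristic keeps one of $y_1, y_2$ fixed, which after reverting to $x$-coordinates is exactly the identity $|x_1^+ - x_1^-| = |x_2^+ - x_2^-|$ needed for the Haar compatibility. Hence for any interior $x$ we may split $x = \al^+ x^+ + \al^- x^-$ along the characteristic through $x$, with $\al^\pm \in (0,1)$ and $\al^+ + \al^- = 1$, satisfying: (i) $|x_1^+ - x_1^-| = |x_2^+ - x_2^-|$; (ii) $B_\tau(x) = \al^+ B_\tau(x^+) + \al^- B_\tau(x^-)$ by linearity along the characteristic; (iii) $x^+$ lies on the boundary $\{x_3 = |x_1|^p\}$, where $B_\tau(x^+) = (\tau^2 (x_1^+)^2 + (x_2^+)^2)^{p/2}$.

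Iterate this splitting on a dyadic tree: starting with $x^{(0)} = x$ on $I$, decompose $I = I^+ \cup I^-$ with $|I^\pm| = \al^\pm |I|$, assign $x^\pm$ to $I^\pm$, and recurse on the interior subinterval $I^-$. On any subinterval $K$ whose assigned point $x^K$ lies on the boundary, set $f \equiv x_1^K$ and $g \equiv x_2^K$ (constants), so that $\langle |f|^p \rangle_K = x_3^K$ holds automatically and $(g^2 + \tau^2 f^2)^{p/2} \equiv B_\tau(x^K)$ on $K$ by (iii). The Haar identity $|(f, h_J)| = |(g, h_J)|$ is preserved at every split by (i). Summing (ii) over dyadic depth up to $n$ yields, for every $n$,
\[
B_\tau(x) = \frac{1}{|I|} \int_{E_n} (g^2 + \tau^2 f^2)^{\frac p2} + \sum_{J \in \Fk_n} \frac{|J|}{|I|} B_\tau(x^J),
\]
where $E_n$ is the union of boundary pieces completed by depth $n$ and $\Fk_n$ is the collection of remaining interior pieces.

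The main obstacle is showing that the interior remainder is negligible. By arranging the splittings so that $\al^- \leq \frac12$, the total interior measure $\sum_{J \in \Fk_n}|J|/|I| \leq 2^{-n}$ decays geometrically; combined with local boundedness of $B_\tau$ (from Corollary \ref{Bellmancontinuity}) on any compact portion of the orbit $\{x^{(n)}\}$, the sum on the right becomes $O(\e)$ for $n$ sufficiently large. The delicate point is Case $(2_2)$, where the characteristics are vertical and the interior iterate $x^{(n)}$ drifts to infinity; there one truncates the recursion by selecting a large but finite $x^-$, terminates $f, g$ by a constant extension on the last small interior piece, and absorbs the resulting defect into the $\e$-budget using the explicit polynomial form of $B_\tau$ on the linear region together with Corollary \ref{Bellmancontinuity}. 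The net outcome is $\ftpI \geq B_\tau(x) - \e$, which completes the argument.
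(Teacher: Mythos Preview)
Your iterative extremal-sequence construction is the right general philosophy, but it has a genuine gap in the implicit regions (Case $(1_2)$ for $p>2$ and Case $(3_2)$ for $p<2$). There the characteristics do \emph{not} run from the Dirichlet boundary $\{x_3=|x_1|^p\}$ back to that boundary: a characteristic through an interior point goes from a boundary point $U$ to a point $W$ on the symmetry plane $\{y_1=y_2\}$ (equivalently $\{x_1=0\}$), and stops. Your recursion therefore drives the interior iterate $x^{(n)}$ toward $W$, not toward the Dirichlet boundary. At $W$ itself there is no admissible split along which $B_\tau$ is affine: the two symmetric characteristics meeting at $W$ violate $|x_1^+-x_1^-|=|x_2^+-x_2^-|$ (one has $|\Delta x_1|=2|x_1^{P}|$, the other $|\Delta x_2|=0$), while in the transverse direction $D_1>0$ strictly, so $B_\tau$ is strictly concave. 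Consequently your scheme leaves an irreducible residual $\alpha^- B_\tau(W)$ that you cannot absorb; you are back to needing $B_\tau(0,x_2,x_3)\le\B_\tau(0,x_2,x_3)$, which is exactly the nontrivial content. Relatedly, your claim that one can always arrange $\alpha^-\le\tfrac12$ is false: if $x$ sits in the half of the characteristic nearer $W$, any split with $x^+=U$ forces $\alpha^->\tfrac12$.

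The paper closes this gap by treating the endpoint $\{x_1=0\}$ separately. For $2<p<\infty$ it invokes explicit self-similar extremal functions from \cite{VaVo3} to get $B_\tau(0,x_2,x_3)\le\B_\tau(0,x_2,x_3)$ directly, and then on each characteristic compares linearity of $B_\tau$ against restrictive concavity of $\B_\tau$ (Proposition~\ref{concavity}), with equality at $U$ (Dirichlet data) and the just-proved inequality at $W$. For $1<p<2$ it bypasses the foliation entirely on the implicit region via Minkowski's inequality in $\ell^{2/p}$ together with Burkholder's $B_0=\B_0$, and handles the explicit region by the monotonicity Lemma~\ref{leastbiconcavemajorantlemma}. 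Your Case $(2_2)$ truncation idea is reasonable in spirit, but note that the paper never needs it: the explicit region is dispatched by the monotonicity argument, not by tracking vertical characteristics to infinity.
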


\begin{proof}
For $1<p<2$ there is a direct proof, which will be discussed first.  By (\ref{Burkholderrelation}) we know that $B_0=\left(B_\tau^{\frac 2p}-\tau^2x_3^{\frac 2p}\right)^{\frac p2}$ on $\{|x_2| \leq (p^*-1)|x_1|\}.$  Consider, $\widetilde{\B}_0=\left(\B_\tau^{\frac 2p}-\tau^2x_3^{\frac 2p}\right)^{\frac p2}.$  It suffices to show that $B_0 \leq \widetilde{\B}_0.$  But, $B_0 = \B_0$ (as Burkholder showed), so without the supremum's we can reduce to simply showing
\beno
\langle |g|^p \rangle_I^{\frac 2p} + \tau^2\langle |f|^p \rangle_I^{\frac 2p} \leq \langle (\tau^2|f|^2+|g|^2)^{\frac p2} \rangle_I.
\eeno
Apply Minkowski:  $\left\| \int_I{(A, C)}\right\|_{l^{\frac 2p}} \leq \int_I{\|(A, C)\|_{l^{\frac 2p}}}.$  Choosing $A = |g|^p$ and $C = |\tau f|^p$ proves the result.  So we have shown that $B_\tau \leq \B_\tau$ on $\{|x_2| \leq (p^*-1)|x_1|\}.$  

Now we would like to show that $B_\tau \leq \B_\tau$ on $\{|x_2| \geq (p^*-1)|x_1|\}.$  Let $H_1(x_1, x_2, x_3) = B_\tau(x_1, x_2, x_3) - B_\tau(0,0,1)x_3.$  Lemma \ref{leastbiconcavemajorantlemma}, in the next section, proves that $H_1(x_1, x_2, \cdot)$ is an increasing function starting at $H_1(x_1, x_2, |x_1|^p) = v_\tau(x_1, x_2)$ and increasing to $\widetilde{U}_{p,\tau}(x,y):= \sup_{t\geq|x|^p}\{B_\tau(x,y,t)-B_\tau(0,0,1)t\}.$  The same proof works for $H_2(x_1, x_2, x_3) = \B_\tau(x_1, x_2, x_3) - \B_\tau(0,0,1)x_3.$  So
\beno
H_2(x_1, x_2, x_3) \geq v_\tau(x_1, x_2) = B_\tau(x_1, x_2, x_3) - B_\tau(0, 0, 1)x_3.
\eeno
Since $B_\tau(0,0,1) \leq \B_\tau(0,0,1),$ then $B_\tau \leq \B_\tau$ on $\{|x_2| \geq (p^*-1)|x_1|\}.$

Now we consider $2<p<\infty.$  Let $\e>0$ be arbitrarily small and consider the following extremal functions\begin{displaymath}
   f(x) = \left\{
     \begin{array}{lr}
       -c & : 1<x<\e\\
       \gamma f\left(\frac{t-\e}{1-2\e}\right) & : \e < x < 1-\e\\
       c & : 1-\e <x<1,
     \end{array}
   \right.
\end{displaymath} 
\begin{displaymath}
   g(x) = \left\{
     \begin{array}{lr}
       d_- & : 1<x<\e\\
       \gamma g\left(\frac{t-\e}{1-2\e}\right) & : \e < x < 1-\e\\
       d_+ & : 1-\e <x<1,
     \end{array}
   \right.
\end{displaymath} 
where $c, d_{\pm}$ and $\gamma$ are defined so that $f$ and $g$ are a pair of test functions at $(0, x_2, x_3).$  We can use $f$ and $g$ to show, just as in \cite{VaVo3} (Lemma 3, pg. 30), that 
\be \label{BellUppBnd}
B_\tau(0,x_2, x_3) \leq \B_\tau(0,x_2, x_3).
\ee

Now we need to take care of the estimate when $x_1 \neq 0.$  Making a change of coordinates from $x$ to $y$ we only need to consider $y \in \Xi_+,$ by the symmetry property of the Bellman function and Bellman function candidate.  So far we have that $M_\tau(\yone, \yone, \ythr) \leq \M_\tau(\yone, \yone, \ythr)$ by (\ref{BellUppBnd}).  The Dirichlet boundary conditions give that $M(\yone, \ytwo, (\yone - \ytwo)^{p}) = \M(\yone, \ytwo, (\yone - \ytwo)^{p}).$  On any characteristic in $\{\frac{p-2}{p}\yone \leq \ytwo \leq \yone\},$ see ~Figure ~\ref{gluedchar1_2and2_2}, $M_\tau$ is linear (since it is the Monge--Amp\`ere solution) and $\M_\tau$ is concave (by Proposition \ref{concavity}).  Therefore, $M_\tau(\yone, \ytwo, \ythr) \leq \M_\tau(\yone, \ytwo, \ythr)$ on $\{\frac{p-2}{p}\yone \leq \ytwo \leq \yone\}.$  For the remaining part of $\Xi_+,$ we can use the same proof as for $1<p<2$ to get $M_\tau(\yone, \ytwo, \ythr) \leq \M_\tau(\yone, \ytwo, \ythr)$ on $\{-\yone \leq \ytwo \leq \frac{p-2}{p}\yone\}.$\qedhere
\end{proof}

Now that we have proven $B = \B,$ we will mention another surprising fact.

\begin{defi}
We define $\B^l = \B^l(x_1, x_2, x_3)$ as the least restrictively concave majorant of $(x_2^2+\tau^2x_1^2)^{\frac p2}$ in $\Omega.$
\end{defi}

\begin{prop}
For $1< p < 2$ and $\tau \leq \frac 12$ or $2 \leq p < \infty$ and $\tau \in \R$ we have $B = \B = \B^l.$
\end{prop}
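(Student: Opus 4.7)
The proof reduces to showing $\B = \B^l$, since $B = \B$ has already been established by Propositions \ref{Bellmanupperbound} and \ref{Bellmanlowerbound}. My plan is to prove the two inequalities $\B \geq \B^l$ and $\B \leq \B^l$ separately, each by a short invocation of one of the structural results already at our disposal.

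For the direction $\B \geq \B^l$, the strategy is to verify that $\B$ is itself a restrictively concave majorant of $(x_2^2 + \tau^2 x_1^2)^{p/2}$ on $\Omega$; minimality of $\B^l$ then forces $\B \geq \B^l$. Restrictive concavity of $\B$ is exactly Proposition \ref{concavity}. The majorant property I would obtain via Jensen's inequality applied to the function $\Psi:\R^2 \to \R$ defined by $\Psi(u, v) := (u^2 + \tau^2 v^2)^{p/2}$, which is convex for every $p \geq 1$ as the $p$-th power of the equivalent Euclidean norm $\sqrt{u^2 + \tau^2 v^2}$. For any admissible pair $(f, g)$ in the definition of $\B(x)$,
\[
\ftpI \;=\; \langle \Psi(g, f) \rangle_I \;\geq\; \Psi(\gI, \fI) \;=\; (x_2^2 + \tau^2 x_1^2)^{\ptwo}.
\]
Taking the supremum over admissible $(f, g)$ gives $\B(x) \geq (x_2^2 + \tau^2 x_1^2)^{p/2}$ throughout $\Omega$, and by minimality $\B \geq \B^l$.

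For the reverse inequality $\B \leq \B^l$, I would apply Proposition \ref{Bellmanupperbound} directly with $\widetilde{B} = \B^l$. By its very definition, $\B^l$ has restrictive concavity; and since it majorizes $(x_2^2 + \tau^2 x_1^2)^{p/2}$ everywhere on $\Omega$, it in particular satisfies $\B^l(x_1, x_2, |x_1|^p) \geq (\tau^2 x_1^2 + x_2^2)^{p/2}$ on the Dirichlet portion of the boundary. The hypotheses of Proposition \ref{Bellmanupperbound} are therefore met, yielding $\B^l \geq \B$.

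Combining the two directions gives $\B = \B^l$, and chained with $B = \B$ we get $B = \B = \B^l$. There is no genuine obstacle: once one notices that Jensen's inequality applied to the convex function $\Psi$ upgrades $\B$ from a restrictively concave function matching the Dirichlet data on the boundary to a genuine majorant of the boundary expression on all of $\Omega$, the result is a clean repackaging of Propositions \ref{concavity} and \ref{Bellmanupperbound} together with the already proved identity $B = \B$.
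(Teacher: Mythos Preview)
Your argument is correct. The paper itself does not prove this proposition here; it simply defers to \cite{BJV}. You have instead given a direct, self-contained proof using only machinery already established in the present paper, which is a genuine improvement in exposition.

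A few remarks. First, your proof of $\B=\B^l$ in fact makes no use of the hypotheses on $p$ and $\tau$: Proposition~\ref{concavity} and Proposition~\ref{Bellmanupperbound} are both stated for all $1<p<\infty$, and the Jensen step works for every $p\ge 1$ and every $\tau$. The restriction on $(p,\tau)$ in the statement is needed only to invoke $B=\B$, since the explicit candidate $B$ was only constructed (Propositions~\ref{solution1_2and2_2glue} and~\ref{solutionsfrom2_2and3_2}) in that range. Second, you are tacitly using that the pointwise infimum of restrictively concave majorants is again a restrictively concave majorant, so that $\B^l$ is well-defined and actually enjoys restrictive concavity; this is immediate from the definition but worth stating. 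With these points noted, your two inequalities $\B\ge\B^l$ (minimality plus Jensen) and $\B^l\ge\B$ (Proposition~\ref{Bellmanupperbound} applied to $\widetilde B=\B^l$) close the loop cleanly.
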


This is proven in \cite{BJV}.

\section{\bf Proving the main result}\label{sectionmainresult}
Now that we have the Bellman function, the main result can be proven without too much difficulty.  But first, we will find another relationship between $U$ and $v.$  Quite surprisingly, $U$ is the least zigzag-biconcave majorant of $v.$

\begin{defi}
A function of $(x,y)$ that is biconcave in $(x+y, x-y)$ we call zigzag-biconcave.
\end{defi}

\begin{lemma} \label{leastbiconcavemajorantlemma}
Let $1<p<\infty$ and $\widetilde{U}(x,y)= \sup_{t\geq|x|^p}\{B_\tau(x,y,t)-B_\tau(0,0,1)t\}.$  Fix $(x,y).$  The function $H(x,y,t) = B_\tau(x,y,t)-B_\tau(0,0,1)t$ is increasing in $t$ from $H(x,y,|x|^p)=v(x,y) :=(\tau^2|x|^2+|y|^2)^{\frac p2}-((p^*-1)^2+\tau^2)^{\frac p2}|x|^p$ to $\widetilde{U}_{p,\tau}(x,y).$ 
\end{lemma}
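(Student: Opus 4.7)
The plan is to establish both assertions separately. For the boundary value I invoke the Dirichlet data of Proposition \ref{Bellmanprop}(ii), which in the $x$--variables reads $B_\tau(x,y,|x|^p)=(y^2+\tau^2x^2)^{p/2}$, together with the explicit evaluation $B_\tau(0,0,1)=((p^*-1)^2+\tau^2)^{p/2}$ obtained by plugging $y_1=y_2=0,\ y_3=1$ into the formulas of Propositions \ref{solution1_2and2_2glue} and \ref{solutionsfrom2_2and3_2} (at this corner the explicit piece gives the stated constant directly, and the implicit relation $G(0,0)=G(\sqrt{\omega^2-\tau^2},1)$ forces $\sqrt{\omega^2-\tau^2}=p-1$, yielding the same value). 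Subtracting then gives $H(x,y,|x|^p)=v(x,y)$ at once.

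For the monotonicity in $t$ my tools are restrictive concavity and homogeneity. Fixing $(x,y)$ and considering two levels $(x,y,t_1)$, $(x,y,t_2)$ with $t_{1,2}\ge|x|^p$, the constraint $|x_1^+-x_1^-|=|x_2^+-x_2^-|$ holds trivially (both differences are zero), so Proposition \ref{candidateconcavity} applies and $\varphi(t):=H(x,y,t)=B_\tau(x,y,t)-B_\tau(0,0,1)\,t$ is concave on $[|x|^p,\infty)$. Next, the homogeneity from Proposition \ref{Bellmanprop}(iv), transported back to the $x$--variables, gives $B_\tau(x,y,t)=t\,B_\tau(t^{-1/p}x,t^{-1/p}y,1)$; continuity of $B_\tau$ at $(0,0,1)$ (Corollary \ref{Bellmancontinuity}) then forces $B_\tau(t^{-1/p}x,t^{-1/p}y,1)\to B_\tau(0,0,1)$ as $t\to\infty$, so $\varphi(t)/t\to 0$.

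A concave function $\varphi$ on a half-line with $\varphi(t)/t\to 0$ is necessarily nondecreasing: at any $t_0$ the supporting-line bound $\varphi(t)\le\varphi(t_0)+\varphi'_+(t_0)(t-t_0)$ holds for $t\ge t_0$, and a negative $\varphi'_+(t_0)$ would give $\limsup_{t\to\infty}\varphi(t)/t\le\varphi'_+(t_0)<0$, contradicting the sublinear decay. Hence $\varphi$ increases from $\varphi(|x|^p)=v(x,y)$ up to $\sup_{t\ge|x|^p}\varphi(t)=\widetilde U_{p,\tau}(x,y)$, which is exactly the claim. The only delicate point is the sublinear estimate $\varphi(t)/t\to 0$, which in turn rests on continuity of $B_\tau$ at the corner $(0,0,1)$ of $\Omega$; everything else is direct bookkeeping with already-established properties.
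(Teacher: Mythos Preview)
Your proof is correct and follows essentially the same route as the paper: concavity of $t\mapsto B_\tau(x,y,t)$ from restrictive concavity, the limit $\varphi(t)/t\to 0$ via homogeneity and continuity at $(0,0,1)$, and the Dirichlet data for the starting value $v(x,y)$. Your supporting-line argument for ``concave with $\varphi(t)/t\to 0$ implies nondecreasing'' is a slightly cleaner phrasing of the paper's contradiction (which passes through $H\to-\infty$ and a spurious-looking inequality), and your explicit verification of $B_\tau(0,0,1)=((p^*-1)^2+\tau^2)^{p/2}$ fills in a detail the paper leaves implicit.
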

\begin{proof}
Recall that $B_\tau$ is continuous in $\Omega$ and for $(x,y)$ fixed, $B_\tau(x,y,\cdot)$ is concave.  Then $H(x,y,\cdot)$ is also concave.  Since $\widetilde{U}_{p,\tau}(x,y)= \sup_{t\geq|x|^p}\{B_\tau(x,y,t)-B_\tau(0,0,1)t\},$ then it either increases to $\widetilde{U}(x,y),$ or there exists $t_0$ such that $H(x,y,t_0) = \widetilde{U}(x,y)$ and $H$ is decreasing for $t>t_0.$  If $H$ is decreasing for $t>t_0,$ then $H \longrightarrow -\infty$ as $t \longrightarrow \infty$ by concavity.  Then there exists $\e >0$ and $t' > t_0$ such that $H(x,y,t') < \e t'.$  So we have, $\limsup_{t \to \infty}{\frac{H(x,y,t)}{t}}<-\e.$  But,
\beno
\lim_{t \to \infty}\frac{H(x,y,t)}{t} = \lim_{t \to \infty}\left[B_\tau\left(\frac{x}{t^{\frac 1p}}, \frac{y}{t^{\frac 1p}}, 1\right) - B_\tau(0,0,1)\right] = 0, 
\eeno
by continuity of $B_\tau$ at $(0,0,1).$  This gives us a contradiction.  Therefore, $H(x,y,t)\geq -\e t,$ for all $t$ and all $\e >0,$ i.e. $H$ is non-negative concave function on $[|x|^p,\infty).$  So $H(x,y,\cdot)$ is increasing and $H(x,y,|x|^p) = v_{p,\tau}(x,y)$ by the Dirichlet boundary conditions of $B_\tau$ in Proposition \ref{Bellmanprop}.\qedhere
\end{proof}

\begin{prop}\label{Uleastzigzagmajorant}
For $1<p<2$ and $\tau| \leq \frac 12$ or $2 \leq p < \infty$ and $\tau \in \R, U_{p,\tau}(x,y) = \widetilde{U}_{p,\tau}(x,y).$ 
\end{prop}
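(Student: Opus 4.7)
By Lemma \ref{leastbiconcavemajorantlemma}, for each fixed $(x,y)$ the function
\begin{equation*}
H(x,y,t) := B_\tau(x,y,t) - B_\tau(0,0,1)\,t
\end{equation*}
is concave and nondecreasing on $[|x|^p,\infty)$, so $\widetilde U_{p,\tau}(x,y) = \lim_{t\to\infty} H(x,y,t)$. Setting $C := B_\tau(0,0,1)$, the explicit halves of the formulas in Propositions \ref{solution1_2and2_2glue} and \ref{solutionsfrom2_2and3_2}, evaluated at $(x_1,x_2,x_3)=(0,0,1)$, immediately yield $C = ((p^*-1)^2+\tau^2)^{p/2}$. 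The task thus reduces to computing this limit in each of the two natural regions and recognizing the result as $U_{p,\tau}$.

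In the region where $U = v$ --- namely $|y|\leq(p^*-1)|x|$ for $p\geq 2$ and $|y|\geq(p^*-1)|x|$ for $1<p<2$ --- the explicit half of $B_\tau$, after reverting to $x$--coordinates, takes the uniform form
\begin{equation*}
B_\tau(x,y,t) = (y^2+\tau^2 x^2)^{p/2} + C\,(t - |x|^p),
\end{equation*}
valid for all $t\geq|x|^p$ with $(x,y)$ fixed in this region. Consequently $H(x,y,t) = v_{p,\tau}(x,y)$ is independent of $t$, and $\widetilde U = v = U$ throughout.

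In the complementary (implicit) region I extract the limit from the Monge--Amp\`ere relation of Proposition \ref{solution1_2and2_2glue}, which in $x$--coordinates reads $G(x_2,x_1) = t\,G(\sqrt{\omega^2-\tau^2},1)$ for $p\geq 2$ (and symmetrically for $1<p<2$), where $\omega = (B_\tau/t)^{1/p}$. Since $s\mapsto G(s,1) = (s+1)^{p-1}[s-(p-1)]$ has a simple zero at $s_0 = p-1$ with $\partial_s G(s_0,1) = p^{p-1}$, as $t\to\infty$ the right-hand side can stay bounded only if $\sqrt{\omega^2-\tau^2}\to s_0$, forcing $\omega\to\omega_0:=\sqrt{(p-1)^2+\tau^2}$, and notably $\omega_0^p = C$. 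Writing $\omega = \omega_0+\varepsilon$ and Taylor-expanding both sides yields
\begin{equation*}
\varepsilon\,t \;\longrightarrow\; \frac{(p-1)\,G(x_2,x_1)}{p^{p-1}\omega_0},
\qquad H = p\omega_0^{p-1}\varepsilon\,t + O(\varepsilon^2 t),
\end{equation*}
so $\lim_{t\to\infty} H(x,y,t) = \frac{(p-1)\omega_0^{p-2}}{p^{p-2}}\,G(y,x)$, which a direct simplification identifies with $u_{p,\tau}(x,y)$.

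The main obstacle is the rigor of this asymptotic. One must argue that the implicit function $\omega=\omega(x,y,t)$ is sufficiently smooth in a neighborhood of $\omega_0$ to justify the linear expansion (so that the $O(\varepsilon^2 t)$ remainder is genuinely $o(1)$, using $\varepsilon = O(1/t)$), and then match the two forms of the constant: $\frac{(p-1)\omega_0^{p-2}}{p^{p-2}}$ against $\alpha_{p,\tau} = p(1-1/p^*)^{p-1}(1+\tau^2/(p^*-1)^2)^{(p-2)/2}$ from the definition of $u$. The dual range $1<p<2$ is handled identically after interchanging the arguments of $G$ and replacing $p-1$ by $1/(p-1) = p^*-1$ throughout.
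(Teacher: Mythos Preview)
Your proof is correct; in the explicit region you proceed exactly as the paper does, and in the implicit region you take a genuinely different (and in one sense more self-contained) route.

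For the implicit half the paper does \emph{not} carry out the asymptotic expansion you outline. Instead it invokes relation~(\ref{Burkholderrelation}), $B_\tau=\bigl(B_0^{2/p}+\tau^2 x_3^{2/p}\bigr)^{p/2}$, to rewrite the limit $\lim_{t\to\infty}\bigl(B_\tau(x,y,t)-B_\tau(0,0,1)t\bigr)$ as a $u$-derivative of $\bigl(B_0^{2/p}(u^{1/p}x,u^{1/p}y,1)+\tau^2\bigr)^{p/2}$ at $u=0$, which factors as $\bigl(1+\tau^2/(p-1)^2\bigr)^{(p-2)/2}\widetilde U_0(x,y)$; the identity $\widetilde U_0=U_0$ is then quoted from Burkholder~\cite{Bu3}. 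This is shorter and sidesteps the Taylor-remainder bookkeeping, but it leans on the $\tau=0$ result as a black box. Your direct expansion of $G(x_2,x_1)=t\,G(\sqrt{\omega^2-\tau^2},1)$ around the simple zero $\beta=p-1$ computes the limit from scratch: since $G(\beta,1)=(\beta+1)^{p-1}[\beta-(p-1)]$ has derivative $p^{p-1}$ there, the implicit function theorem gives $\beta-(p-1)=O(1/t)$ uniformly, so $\varepsilon=O(1/t)$ and the $O(\varepsilon^2 t)$ remainder is indeed $O(1/t)$, as you anticipated. The constant-matching you flag is a one-line computation: with $p^*=p$ one has $\alpha_{p,\tau}=p\bigl(\tfrac{p-1}{p}\bigr)^{p-1}\bigl(\tfrac{\omega_0}{p-1}\bigr)^{p-2}=(p-1)\omega_0^{p-2}/p^{p-2}$, and $G(y,x)=(|x|+|y|)^{p-1}[|y|-(p-1)|x|]$ in the relevant octant.
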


\begin{proof}
Suppose $2\leq p<\infty$ and $|y| \geq (p-1)|x|.$  Then 
\beno
\widetilde{U}_0(x,y) &=& \lim_{t \to \infty}\left(B_0(x,y,t)-B_0(0,0,1)t\right)\\
&=& \lim_{t \to \infty}\frac{B_0\left(\frac{x}{t^{\frac 1p}},\frac{y}{t^{\frac 1p}},1\right)-B_0(0,0,1)}{1/t}\\
&=& \frac{d}{du}B_0(u^{\frac 1p}x, u^{\frac 1p}y, 1)\bigg|_{u=0}.
\eeno
Now we repeat the same steps and obtain
\beno
\widetilde{U}_\tau(x,y) &=& \lim_{t \to \infty}\left(B_\tau(x,y,t)-B_\tau(0,0,1)t\right)\\
&=& \frac{d}{du}\left[\left(B_0^{\frac 2p}(u^{\frac 1p}x, u^{\frac 1p}y, 1)+\tau^2\right)^{\frac p2}\right]\bigg|_{u=0}\\
&=& \left[\!\!\left(\!\!B_0^{\frac 2p}(u^{\frac 1p}x, u^{\frac 1p}y, 1)+\tau^2\!\!\right)^{\!\!\!\frac{p-2}{2}}\!\!\!\!\!\!B_0^{\frac{2-p}{p}}(u^{\frac 1p}x, u^{\frac 1p}y, 1)\frac{d}{du}B_0(u^{\frac 1p}x, u^{\frac 1p}y, 1)\right]\!\!\bigg|_{u=0}\\
&=& \left(1+\frac{\tau^2}{(p-1)^2}\right)^{\frac{p-2}{2}}\widetilde{U}_0(x,y)\\
&=& \left(1+\frac{\tau^2}{(p-1)^2}\right)^{\frac{p-2}{2}}U_0(x,y),
\eeno
where the last equality is by \cite{Bu3}.  Therefore, $\widetilde{U}_\tau(x,y) = U_\tau(x,y).$

Now suppose $|y| \leq (p-1)|x|.$  Looking at the explicit form of $B_\tau$ in the region, note that $B_\tau(x,y,\cdot)$ is linear.  So $\widetilde{U}_\tau(x,y) = \sup_{t \geq |x|^p}\{B_\tau(x,y,t)-B_\tau(0,0,1)t\} = \sup_{t \geq |x|^p}\{B_\tau(x,y,0)\} = v_\tau(x,y) = U_\tau(x,y).$

We can apply the same proof to show that $\widetilde{U}_\tau(x,y) = U_\tau(x,y)$ for $1<p<2.$ \qedhere
\end{proof}

\begin{prop}
 $U$ is the least zigzag-biconcave majorant of $v$.
\end{prop}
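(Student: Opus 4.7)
The plan is to verify both halves of the claim --- that $U$ is a zigzag-biconcave majorant of $v$, and that any other such majorant dominates $U$ --- by packaging results already in hand: the identification $U=\widetilde U$ from Proposition~\ref{Uleastzigzagmajorant}, the restrictive concavity of $B_\tau$ from Proposition~\ref{candidateconcavity}, and the least-majorant property $B_\tau=\B_\tau=\B^l$.

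First I would verify that $U$ is a zigzag-biconcave majorant of $v$. The inequality $U\ge v$ is immediate from Lemma~\ref{leastbiconcavemajorantlemma}, since $t\mapsto B_\tau(x,y,t)-B_\tau(0,0,1)t$ is nondecreasing in $t$ and equals $v(x,y)$ at $t=|x|^p$. For zigzag-biconcavity it suffices to show that $\phi(s):=U(x+as,y+bs)$ is concave in $s$ for $(a,b)\in\{(1,1),(1,-1)\}$, since these are precisely the directions along which $x-y$, respectively $x+y$, is constant. Fix $s_1<s_2$ with midpoint $s$, take near-optimizers $t_1,t_2$ in the definition of $\widetilde U$, and set $t=(t_1+t_2)/2$. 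Convexity of $u\mapsto|u|^p$ keeps $t\ge |x+as|^p$ admissible. Since the three-dimensional increment from $(x+as_1,y+bs_1,t_1)$ to $(x+as_2,y+bs_2,t_2)$ satisfies $|\Delta x_1|=|a||s_2-s_1|=|b||s_2-s_1|=|\Delta x_2|$, restrictive concavity of $B_\tau$ gives midpoint concavity of $\phi$. The continuity of $U$ (Corollary~\ref{Bellmancontinuity}) then upgrades this to genuine concavity.

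Next, let $W$ be any zigzag-biconcave majorant of $v$. Evaluating the unifying relation of Proposition~\ref{unifyingsolution} at $(0,0,1)$ pins down $B_\tau(0,0,1)=((p^*-1)^2+\tau^2)^{p/2}$, so the function
\[
V(x_1,x_2,x_3):=W(x_1,x_2)+B_\tau(0,0,1)\,x_3
\]
satisfies $V(x_1,x_2,|x_1|^p)\ge v(x_1,x_2)+((p^*-1)^2+\tau^2)^{p/2}|x_1|^p=(\tau^2 x_1^2+x_2^2)^{p/2}$ on the parabolic boundary of $\Omega$. The $x_3$-term is linear, and any displacement with $|\Delta x_1|=|\Delta x_2|$ projects to a line of slope $\pm 1$ in the $(x_1,x_2)$-plane, along which $W$ is concave by zigzag-biconcavity; hence $V$ has restrictive concavity on $\Omega$. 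Proposition~\ref{Bellmanupperbound} then gives $V\ge \B_\tau=B_\tau$, i.e.\ $W(x_1,x_2)\ge B_\tau(x_1,x_2,x_3)-B_\tau(0,0,1)x_3$ for every admissible $x_3\ge |x_1|^p$. Taking the supremum over such $x_3$ and invoking Proposition~\ref{Uleastzigzagmajorant} yields $W\ge \widetilde U=U$.

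The main delicate point will be the admissibility check $t\ge |x+as|^p$ and the passage from midpoint to full concavity in the first step; beyond this, the argument is a clean cross-translation between the zigzag-biconcave description in $(x,y)$ and the restrictively concave description on $\Omega$, with the additive $x_3$-term contributing exactly the constant $B_\tau(0,0,1)$ dictated by the Dirichlet data.
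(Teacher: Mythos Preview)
The paper itself does not prove this proposition; it simply refers the reader to \cite{BJV}. Your argument, by contrast, is a complete and self-contained proof built entirely from the machinery already developed in the present paper, so in that sense you have supplied more than the paper does.

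Your two halves are both sound. For the majorant direction, the supremum representation $U=\widetilde U$ together with the monotonicity in Lemma~\ref{leastbiconcavemajorantlemma} immediately gives $U\ge v$; and your midpoint argument for zigzag-biconcavity is correct, the key point being that the admissibility $t\ge |x+as|^p$ survives averaging by convexity of $|\,\cdot\,|^p$, and that the condition $|\Delta x_1|=|\Delta x_2|$ is exactly what restrictive concavity of $B_\tau$ (Proposition~\ref{candidateconcavity}) requires. For the minimality direction, lifting a zigzag-biconcave majorant $W$ to $V(x_1,x_2,x_3)=W(x_1,x_2)+B_\tau(0,0,1)x_3$ and feeding it through Proposition~\ref{Bellmanupperbound} (together with $\B_\tau=B_\tau$ from Propositions~\ref{Bellmanupperbound} and~\ref{Bellmanlowerbound}) is exactly the right translation between the two pictures.

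One small correction: when you upgrade midpoint concavity to full concavity you cite Corollary~\ref{Bellmancontinuity}, but that corollary concerns $B_\tau$, not $U$. The continuity (in fact $C^1$-smoothness) of $U$ is established in Proposition~\ref{unifyingsolution}, so cite that instead. Also, you mention $\B^l$ in your plan but never use it; you can drop that reference, since your argument does not rely on the separate proposition identifying $B_\tau$ with the least restrictively concave majorant.
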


Refer to \cite{BJV} for the proof.

We now have enough machinery to easily prove the main result, in terms of the Haar expansion of a $\R-$valued $L^p$ function.  
\begin{theorem}\label{realHaarmainresult}
Let $1<p<2, |\tau|\leq \frac 12$ or $2\leq p<\infty, \tau\in\R.$  Let $f, g:[0,1] \rightarrow \R.$  If $|\langle g \rangle_{[0,1]}| \leq (p^*-1)|\langle f \rangle_{[0,1]}|$ and $|(f, h_J)| = |(g, h_J)|$ for all $J \in \D,$then $\langle (\tau^2 |f|^2+|g|^2)^{\frac p2}\rangle_{[0,1]} \leq ((p^*-1)^2+\tau^2)^{\frac p2}\langle |f|^p \rangle_{[0,1]},$ where $((p^*-1)^2+\tau^2)$ is the sharp constant and $p^*-1 = \max\left\{p-1, \frac{1}{p-1}\right\}.$
\end{theorem}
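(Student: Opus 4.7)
The proof uses the Bellman function machinery built up above. Throughout I rely on the identification $B_\tau = \B_\tau$ (Propositions \ref{Bellmanupperbound} and \ref{Bellmanlowerbound}), restrictive concavity (Proposition \ref{candidateconcavity}), the Dirichlet data $B_\tau(a,b,|a|^p) = (b^2+\tau^2 a^2)^{p/2}$, and continuity of $B_\tau$ on $\Omega$ (Corollary \ref{Bellmancontinuity}). The plan is to sandwich $B_\tau$ at the initial data $x_1 = \langle f\rangle_{[0,1]}$, $x_2 = \langle g\rangle_{[0,1]}$, $x_3 = \langle |f|^p\rangle_{[0,1]}$ by proving
\begin{equation*}
\langle (\tau^2 f^2 + g^2)^{p/2}\rangle_{[0,1]} \;\le\; B_\tau(x_1,x_2,x_3) \;\le\; ((p^*-1)^2 + \tau^2)^{p/2}\, x_3
\end{equation*}
under the hypothesis $|x_2| \le (p^*-1)|x_1|$.

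For the lower inequality I iterate restrictive concavity down the dyadic tree. At a dyadic $J$ with children $J^{\pm}$ of relative weights $\alpha^{\pm}$, the computation in the proof of Proposition \ref{concavity} shows that the hypothesis $|(f,h_J)| = |(g,h_J)|$ is equivalent to $|\langle f\rangle_{J^+} - \langle f\rangle_{J^-}| = |\langle g\rangle_{J^+} - \langle g\rangle_{J^-}|$, which is precisely what activates restrictive concavity at $(\langle f\rangle_J, \langle g\rangle_J, \langle |f|^p\rangle_J) \in \Omega$. Multiplying by $|J|$, summing over $J \in \D_n$, and telescoping produces the monotone decreasing sequence
\begin{equation*}
I_n := \sum_{J \in \D_n}|J|\, B_\tau(\langle f\rangle_J, \langle g\rangle_J, \langle |f|^p\rangle_J), \qquad I_0 = B_\tau(x_1,x_2,x_3) \ge I_n.
\end{equation*}
Lebesgue differentiation together with continuity of $B_\tau$ identifies the a.e.\ pointwise limit of the integrand as $B_\tau(f,g,|f|^p) = (\tau^2 f^2 + g^2)^{p/2}$ via the Dirichlet data; the non-negativity of $B_\tau$ on $\Omega$ (inherited from the non-negativity of the quantities in the sup defining $\B_\tau$) then makes Fatou's lemma applicable, giving $\liminf I_n \ge \int_0^1 (\tau^2 f^2 + g^2)^{p/2}\,dt$.

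For the upper inequality the two ranges of $p$ are handled by different routes. When $2 \le p < \infty$ and $|x_2| \le (p-1)|x_1|$, the point lies in the explicit piece of $B_\tau$ from Proposition \ref{solution1_2and2_2glue}; converted to $x$-coordinates this reads
\begin{equation*}
B_\tau(x_1,x_2,x_3) = (x_2^2 + \tau^2 x_1^2)^{p/2} + ((p-1)^2 + \tau^2)^{p/2}[x_3 - |x_1|^p],
\end{equation*}
and the pointwise inequality $(x_2^2 + \tau^2 x_1^2)^{p/2} \le ((p-1)^2 + \tau^2)^{p/2}|x_1|^p$ valid on this region finishes the bound. When $1 < p < 2$ with $|\tau| \le 1/2$, the region $\{|x_2| \le (p^*-1)|x_1|\}$ is precisely where $U_{p,\tau}$ equals its non-positive $u$-piece; combining the relation $B_\tau(x,y,t) - B_\tau(0,0,1)\,t \le U_{p,\tau}(x,y)$ (from Lemma \ref{leastbiconcavemajorantlemma} and Proposition \ref{Uleastzigzagmajorant}) with the evaluation $B_\tau(0,0,1) = ((p^*-1)^2+\tau^2)^{p/2}$ (read off the implicit relation at $x_1=x_2=0$ by solving $G(1,\sqrt{\omega^2-\tau^2})=0$) again produces the majorant. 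Sharpness follows from $B_\tau = \B_\tau$ and homogeneity: $\lim_{t\to\infty} t^{-1}B_\tau(x_1,x_2,t) = B_\tau(0,0,1) = ((p^*-1)^2+\tau^2)^{p/2}$, so unpacking the supremum in the definition of $\B_\tau$ yields admissible test pairs whose ratio approaches the claim. The one genuine technical obstacle is the passage to the limit in the Bellman iteration: dominated convergence is not directly available since the dyadic maximal function of $|f|^p$ need not be integrable, but the monotonicity of $I_n$ makes the Fatou argument above sufficient, reducing the rest of the proof to the algebraic comparisons listed in step two.
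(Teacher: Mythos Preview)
Your proof is correct and runs on the same machinery as the paper's, but you do more work than necessary and organize the two $p$-ranges differently.

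The lower inequality $\langle(\tau^2 f^2+g^2)^{p/2}\rangle_{[0,1]}\le B_\tau(x_1,x_2,x_3)$ that you establish by a full Bellman iteration (restrictive concavity down the tree, Lebesgue differentiation, Fatou) is already contained in the single line $B_\tau=\B_\tau$: by definition $\B_\tau$ is the supremum of the left side over all admissible $(f,g)$ with the prescribed averages, so the inequality is immediate once Propositions~\ref{Bellmanupperbound} and~\ref{Bellmanlowerbound} are invoked. The paper's proof therefore skips this step entirely.

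For the upper inequality the paper treats both ranges of $p$ uniformly through Proposition~\ref{Uleastzigzagmajorant}: since $U_{p,\tau}=\widetilde U_{p,\tau}=\sup_{t\ge|x|^p}\{B_\tau(x,y,t)-B_\tau(0,0,1)t\}$ and $U_{p,\tau}(x,y)\le 0$ whenever $|y|\le(p^*-1)|x|$ (this is the piece $v$ when $p\ge 2$ and the piece $u$ when $1<p<2$), one gets $B_\tau(x,y,t)\le B_\tau(0,0,1)\,t$ on that region, with equality at $(0,0)$ giving sharpness. You reproduce exactly this for $1<p<2$, but for $p\ge 2$ you instead read off the explicit formula for $B_\tau$ from Proposition~\ref{solution1_2and2_2glue} and bound it directly. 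That shortcut is perfectly valid (and arguably more transparent), but it hides the uniformity of the argument; the paper's route shows that the same structural fact $U\le 0$ on $\{|y|\le(p^*-1)|x|\}$ drives both cases.
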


\begin{proof}
Suppose that $2 \leq p<\infty$ and $\tau \in \R.$  The proof relies on the fact that the $B = \B$ (Propositions \ref{Bellmanupperbound} and \ref{Bellmanlowerbound}) and $\,U(x,y) = \sup_{t \geq |x|^p}\{B(x,y,t)-B(0,0,1)t\}$ (Proposition \ref{Uleastzigzagmajorant}).

Since $|y| \leq (p^*-1)|x|$ on $\Omega,$  then 
\beno
U(x,y) = v(x,y) = (|y|^2+\tau^2|x|^2)^{\frac p2}-((p^*-1)^2+\tau^2)^{\frac p2}|x|^p \leq 0.
\eeno
Then, 
\beno
\sup_{\substack{t>|x|^p\\ |y| \leq (p^*-1)|x|}}\{B(x,y,t)-B(0,0,1)t\} \leq 0.
\eeno
But, $U(0,0)=0,$ therefore 
\be \label{mainresulthaar}
\sup_{\substack{t>|x|^p\\ |y| \leq (p^*-1)|x|}}\frac{B(x,y,t)}{t} = B(0,0,1) = ((p^*-1)^2 + \tau^2)^{\frac p2}.
\ee
Observing the relationship $B = \B,$ gives the desired result.

For $1<p<2, |\tau|\leq \frac 12$ and $|y| \leq (p^*-1)|x|,$ 
\beno
U(x,y) = p\left(1-\frac{1}{p^*}\right)\left(1+\frac{\tau^2}{(p^*-1)^2}\right)^{\frac{p-2}{2}}(|x|+|y|)^{p-1}[|y|-(p^*-1)|x|] \leq 0,
\eeno
so we have (\ref{mainresulthaar}) by the same reasoning as for $2 \leq p<\infty$. \qedhere
\end{proof}

\begin{remark}
Note that Minkowski's inequality together with Burkholder's original result gives the same upper estimate for $2 \leq p < \infty.$  
\end{remark}

Indeed, if $f \in L^p[0,1]$ and $g$ is the corresponding martingale transform then Minkowski's inequality gives,
\beno
\|g^2 + \tau^2f^2\|_{L^{\frac p2}}^{\frac p2} & \leq & (\|g^2\|_{L^{\frac p2}}+ \|\tau^2 f^2\|_{L^{\frac p2}})^{\frac p2}  =  (\|g\|_{L^{p}}^2+ \|\tau f\|_{L^{p}}^2)^{\frac p2} \\
&\leq & \|f\|_{L^p}^p((p^*-1)^2+\tau^2)^{\frac p2}.
\eeno

This is very surprising in the sense that the ``trivial" constant $((p^*-1)^2+\tau^2)^{\frac p2}$ is actually the sharp constant.

Now we will prove the main result for Hilbert-valued martingales.  The same ideas can be used to extend the previous result to Hilbert-valued $L^p-$functions as well.  Let $\mathbb{H}$ be a separable Hilbert space with $\|\cdot\|_{\mathbb{H}}$ as the induced norm.

\begin{theorem}
Let $1<p<\infty, (W, \mathcal{F}, \mathbb{P})$ be a probability space and $\{f_k\}_{k \in \Z^+}, \{g_k\}_{k\in \Z^+}: W \rightarrow \mathbb{H}$ be two $\mathbb{H}-$valued martingales with the same filtration $\{\mathcal{F}_{k}\}_{k \in \Z^+}.$  Denote $d_k = f_k - f_{k-1}, d_0 = f_0, e_k = g_k - g_{k-1}, e_0 = g_0$ as the associated martingale differences.  If $\|e_k(\omega)\|_{\mathbb{H}} \leq \|d_k(\omega)\|_{\mathbb{H}},$ for all $\omega \in W$ and all $k \geq 0$ and $\tau \in [-\frac 12, \frac 12]$ then 
\beno
\left\|\left(\sum_{k=0}^n{e_k}, \tau \sum_{k=0}^n{d_k} \right)\right\|_{L^p(W, \mathbb{H}^2)} \leq ((p^*-1)^2 + \tau^2)^{\frac p2}\left\|\sum_{k=0}^n{d_k}\right\|_{L^p(W, \mathbb{H})}, 
\eeno
where 
$((p^*-1)^2 + \tau^2)^{\frac p2}$ is the best possible constant and $p^*-1 = \max\{p-1, \frac 1{p-1}\}.$  For $2 \leq p<\infty,$ the result is also true, with the best possible constant, if $\tau\in\R.$ 
\end{theorem}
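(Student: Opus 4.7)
The plan is to lift the scalar zigzag function $U = U_{p,\tau}$ constructed in Section \ref{MongeAmpsolution} to Hilbert-valued arguments and run a standard Burkholder-style martingale iteration. All of the delicate analysis --- the Monge--Amp\`ere solution $U$, its biconcavity, the identity $U = \widetilde U_{p,\tau}$, and the sharp constant --- is already in place. What remains is a vectorization argument and a conditional-expectation iteration.

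Set $C := ((p^*-1)^2+\tau^2)^{p/2}$ and define the target functional $v(X,Y) := (\|Y\|_{\mathbb H}^2 + \tau^2 \|X\|_{\mathbb H}^2)^{p/2} - C\|X\|_{\mathbb H}^p$; after raising both sides of the claimed inequality to the $p$-th power, the statement is equivalent to $\mathbb{E}\,v(f_n, g_n) \leq 0$. Extend $U$ to $\mathbb{H}\times\mathbb{H}$ by
\[
\tilde U(X,Y) := U\bigl(\|X\|_{\mathbb H},\,\|Y\|_{\mathbb H}\bigr).
\]
From the scalar results (Proposition \ref{Uleastzigzagmajorant} and the analysis inside the proof of Theorem \ref{realHaarmainresult}) one reads off $\tilde U \geq v$ pointwise on $\mathbb{H}\times\mathbb{H}$, together with $\tilde U \leq 0$ on the cone $\{\|Y\|_{\mathbb H} \leq (p^*-1)\|X\|_{\mathbb H}\}$.

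The heart of the proof is the Hilbert-valued midpoint inequality
\[
\tilde U(X+H, Y+K) + \tilde U(X-H, Y-K) \;\leq\; 2\tilde U(X,Y), \qquad \|K\|_{\mathbb H} \leq \|H\|_{\mathbb H}.
\]
This lifts the scalar zigzag-biconcavity of $U$. Writing $a_\pm := \|X\pm H\|_{\mathbb H}$ and $b_\pm := \|Y\pm K\|_{\mathbb H}$, the parallelogram identity gives $a_+^2 + a_-^2 = 2(\|X\|_{\mathbb H}^2 + \|H\|_{\mathbb H}^2)$ and the analogous formula for $b_\pm$, together with $|a_+ - a_-| \leq 2\|H\|_{\mathbb H}$ and $|b_+ - b_-| \leq 2\|K\|_{\mathbb H} \leq 2\|H\|_{\mathbb H}$. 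The four planar points $(a_\pm, b_\pm)$ together with $(\|X\|_{\mathbb H}, \|Y\|_{\mathbb H})$ are then in a configuration to which the scalar zigzag-biconcavity of $U$ applies; combined with the sign-symmetry of $U$ (Proposition \ref{Bellmanprop}(i) transferred to $U$) and its monotonicity in each argument --- immediate from the explicit formulas for the $u$ and $v$ pieces --- this yields the displayed inequality.

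With the midpoint inequality in hand, a standard supergradient-and-conditional-expectation argument gives $\mathbb{E}[\tilde U(f_n, g_n)\,|\,\mathcal{F}_{n-1}] \leq \tilde U(f_{n-1}, g_{n-1})$ for each $n\geq 1$, exploiting the martingale property $\mathbb{E}[(d_n, e_n)\,|\,\mathcal{F}_{n-1}] = 0$ and the pointwise bound $\|e_n\|_{\mathbb H} \leq \|d_n\|_{\mathbb H}$. Iterating and taking expectations,
\[
\mathbb{E}\,\tilde U(f_n, g_n) \;\leq\; \mathbb{E}\,\tilde U(d_0, e_0) \;\leq\; 0,
\]
since $\|e_0\|_{\mathbb H} \leq \|d_0\|_{\mathbb H} \leq (p^*-1)\|d_0\|_{\mathbb H}$ places $(d_0, e_0)$ in the cone where $\tilde U \leq 0$. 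Chaining with $\tilde U \geq v$ delivers $\mathbb{E}\,v(f_n, g_n) \leq 0$, the desired $L^p$ bound. Sharpness transfers from the scalar Haar case (Theorem \ref{realHaarmainresult}) through the isometric embedding $\mathbb{R} \hookrightarrow \mathbb{H}$ along any fixed unit vector. The main obstacle is the midpoint inequality above: vector norms convey less geometric information than the vectors themselves, and one must show that no direction of $H, K$ in $\mathbb{H}$ can defeat the scalar biconcavity --- this is ultimately forced by $U$ being the \emph{least} zigzag-biconcave majorant of $v$ (Proposition \ref{Uleastzigzagmajorant}).
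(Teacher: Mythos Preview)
Your overall architecture matches the paper's proof exactly: vectorize $U_{p,\tau}$ by composing with the norms, use $v\le U$ and $U(d_0,e_0)\le 0$, and show the sequence $\mathbb{E}\,\tilde U(f_k,g_k)$ is non-increasing. The paper obtains the supermartingale step from a tangent-line inequality
\[
U(x+h,y+k)\le U(x,y)+\Re\langle\partial_xU,h\rangle+\Re\langle\partial_yU,k\rangle,\qquad \|k\|_{\mathbb H}\le\|h\|_{\mathbb H},
\]
and for its justification simply cites Burkholder's explicit computation in \cite{Bu4} (pp.~77--79), where one differentiates $t\mapsto U(\|x+th\|,\|y+tk\|)$ twice and checks the sign using the specific algebraic form of $U$.

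The gap in your proposal is the midpoint inequality. Your reduction via the parallelogram law and monotonicity does not close: with $a_\pm=\|X\pm H\|$, $b_\pm=\|Y\pm K\|$ you correctly get $\tfrac{a_++a_-}{2}\ge\|X\|$, but the \emph{same} argument gives $\tfrac{b_++b_-}{2}\ge\|Y\|$, and since $U$ is \emph{increasing} in its second argument this pushes the wrong way. Moreover the segment from $(a_+,b_+)$ to $(a_-,b_-)$ need not be a zigzag direction (you only have $|a_+-a_-|\le 2\|H\|$ and $|b_+-b_-|\le 2\|K\|$, not equality), so scalar zigzag-biconcavity does not apply to that segment. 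The assertion that ``$U$ being the least zigzag-biconcave majorant of $v$'' forces the inequality is not substantiated and is not how the paper (or Burkholder) proceeds; what is actually needed is a direct second-derivative computation along $t\mapsto(\|x+th\|,\|y+tk\|)$ using the explicit formulas for $u$ and $v$. Replace your paragraph on the midpoint inequality with that computation (or with a citation to \cite{Bu4}) and the proof goes through.
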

In the theorem, ``best possible" constant means that if $C_{p,\tau} < ((p^*-1)^2+\tau^2)^{\frac 12},$ then for some probability space $(W, \mathcal{G}, P)$ and a filtration $\mathcal{F},$ there exists $\mathbb{H}-$valued martingales $\{f\}_k$ and $\{g\}_k,$ such that \beno
\left\|\left(g_k, \tau f_k \right)\right\|_{L^p([0,1], \mathbb{H}^2)} > C_{p,\tau}\left\|f_k\right\|_{L^p([0,1], \mathbb{H})}. 
\eeno

\begin{proof}
We will prove the result for $2 \leq p<\infty,$ since the result for $1<p<2$ is similar.  Replace $|\cdot|$ with $\|\cdot\|_{\mathbb{H}},$ in $U_{p, \tau}.$  Let $f_n = \sum_{k=0}^n{d_k}$ and $g_n = \sum_{k=0}^n{e_k}.$  Recall that $U := U_{p,\tau}$ is the least zigzag-biconcave majorant of $v := v_{p, \tau}.$  As in \cite{Bu4} (pages 77-79), 
\be \label{hilbertization}
U_{p, \tau}(x+h, y+k) \leq U_{p,\tau}(x,y)+ \Re(\partial_xU_{p,\tau}, h)+ \Re(\partial_yU_{p,\tau}, k),
\ee
for all $x, y, h, k \in \mathbb{H},$ such that $|k| \leq |h|$ and $\|x+ht\|_{\mathbb{H}}\|x+kt\|_{\mathbb{H}}>0.$  The result in (\ref{hilbertization}) follows from the zigzag-biconcavity and implies that $\mathbb{E}[U(f_k, g_k)]$ is a supermartingale.  Lemma \ref{leastbiconcavemajorantlemma} gives that $v(f_n, g_n) \leq U(f_n, g_n).$  Therefore, 
\beno
\mathbb{E}[v(f_n, g_n)] \leq \mathbb{E}[U(f_n, g_n)] \leq \mathbb{E}[U(f_{n-1}, g_{n-1})] \leq \cdots \leq  \mathbb{E}[U(d_0, e_0)].
\eeno
But, $\mathbb{E}[U(d_0, e_0)] \leq 0$ in both pieces of $U_\tau$ since $2-p^* \leq 0$ and $\|e_0\|_{\mathbb{H}} \leq \|d_0\|_{\mathbb{H}}.$  Thus, $\mathbb{E}[v_\tau(f_n, g_n)] \leq 0.$  The constant, in the estimate, is best possible, since it was attained in Theorem \ref{realHaarmainresult}. \qedhere
\end{proof}

\begin{remark}
For $1<p<2$ and $|\tau| > \frac 12,$ the ``trivial" constant $((p^*-1)^+\tau^2)^{\frac p2}$ in the main result is no longer sharp because of a ``phase transition".  In \cite{BJV} there is an $L^p-$function, $f,$ constructed so that together with it's martingale transform, $g,$ we have $\langle (\tau^2 |f|^2+|g|^2)^{\frac p2}\rangle_{[0,1]} > ((p^*-1)^2+\tau^2)^{\frac p2}\langle |f|^p \rangle_{[0,1]}$ for large $\tau.$
\end{remark}

\section{Addendum 1} \label{addendum1}

Throughout this Section the arguments may seem brief in comparison to Section \ref{solutionpgreaterthan2}.  The reason for this is because we cover the exact same argument as in Section \ref{solutionpgreaterthan2}, only with slightly different cases.  So if any arguments are unclear, then returning to Section \ref{solutionpgreaterthan2} should help to clear up any difficulties.  We will first consider Case $(3_2)$ to get a partial Bellman function candidate.
\subsection{\bf Considering Case $(3_2)$}\label{partial3_2subsec}

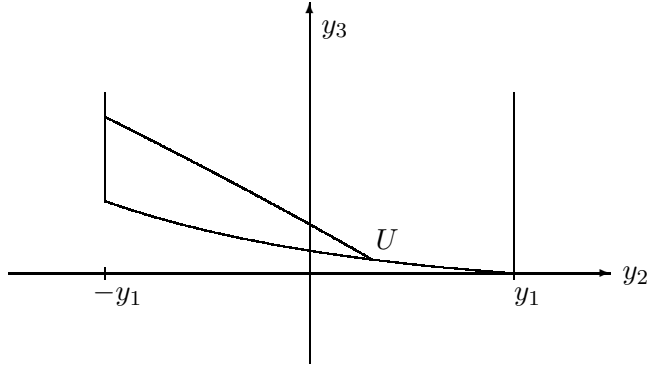
\begin{figure}[h]

\setlength{\unitlength}{0.8cm}

\begin{picture}(10,6)(-5,-1)
\put(-5,0){\vector(1,0){10}} 
\put(0,-1.5){\vector(0,1){6}} 
\put(5.2,-0.1){$y_2$} 

\put(-3.6,-.45){$-y_1$} 
\put(-3.4,-.1){\line(0,1){.2}} 

\put(1.1,.4){$U$} 

\put(3.4, -.45){$y_1$} 
\put(0.2,4){$y_3$} 

\qbezier(3.4,0)(-0.8853,0.3)(-3.4,1.2) 

\put(-3.4,1.2){\line(0,1){1.8}} 
\qbezier(1,.25)(-1,1.4)(-3.4,2.6)
\put(3.4,-.1){\line(0,1){3.1}} 

\end{picture}

\caption{Sample characteristic of Monge--Amp\`ere solution in Case $(3_2)$} 
\label{characteritic3_2}

\end{figure}

\begin{prop}\label{solution3_2}
For $1<p<\infty$ and $-\yone < \ytwo < \frac{2-p}{p}\yone$, $M$ is given implicitly by the relation $G(\yone -\ytwo, \yone +\ytwo) = \ythr G(1, \sqrt{\om^2 -\tau^2}).$
\end{prop}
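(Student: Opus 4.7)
The plan is to mirror the proof of Proposition \ref{solution1_2} step by step. In Case $(3_2)$, $y_1$ is fixed and each characteristic runs from a point $U = (y_1, u, (y_1-u)^p)$ on the parabolic boundary to a point $W = (y_1, -y_1, w)$ on the plane $y_2 = -y_1$. The key structural change comes from the Neumann condition on this plane: Proposition \ref{Bellmanprop}(iii) gives $M_{y_1} = -M_{y_2}$ on $y_1 = -y_2$, as opposed to $M_{y_1} = M_{y_2}$ in Case $(1_2)$.

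First I would combine this Neumann condition with Proposition \ref{Pogorelov}'s identity $M_{y_2} = t_2$ and the homogeneity relation \ref{Bellmanprop}(v) at $W$:
\[
p\bigl[-y_1 t_2 + w t_3 + t_0\bigr] = pM(W) = y_1 M_{y_1} + y_2 M_{y_2} + p y_3 M_{y_3} = -2 y_1 t_2 + p w t_3,
\]
yielding $t_0 = (1 - \tfrac{2}{p}) y_1 t_2$, the negative of the corresponding quantity in Case $(1_2)$. Since $y_1$ is fixed, the characteristic equation $y_2\, dt_2 + y_3\, dt_3 + dt_0 = 0$ reduces to $[y_2 + (1-\tfrac{2}{p})y_1]\, dt_2 + y_3\, dt_3 = 0$. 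Substituting back into $M(y) = y_2 t_2 + y_3 t_3 + t_0$ shows that $M(y)/y_3$ is constant along each characteristic; evaluating at $U$ with the Dirichlet data (Proposition \ref{Bellmanprop}(ii)) yields
\[
M(y) = \left(\frac{\sqrt{(y_1+u)^2 + \tau^2 (y_1-u)^2}}{y_1 - u}\right)^p y_3,
\]
where $u$ is determined implicitly by
\[
\frac{y_2 + (1-\tfrac{2}{p})y_1}{y_3} = \frac{u + (1-\tfrac{2}{p})y_1}{(y_1-u)^p}.
\]
Setting $u = (\tfrac{2}{p}-1)y_1$ pins down the degenerate characteristic $y_2 = \tfrac{2-p}{p}y_1$, so the valid sector is $-y_1 < y_2 < \tfrac{2-p}{p}y_1$; uniqueness of $u$ in this sector follows from the same monotonicity argument as in Lemma \ref{reductionsolution1_2}, applied to the appropriate sign-flipped auxiliary function of $u$.

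Finally I would convert to the $G$-form by introducing $\beta := \sqrt{\omega^2 - \tau^2}$. From $\omega^2 = ((y_1+u)/(y_1-u))^2 + \tau^2$ one gets $\beta = (y_1+u)/(y_1-u)$, hence $u = \tfrac{\beta-1}{\beta+1}\,y_1$ and $y_1 - u = \tfrac{2y_1}{\beta+1}$. Substituting into the implicit equation and collecting the factors $(2y_1)^{p-1}$ and $(\beta+1)^{p-1}$ gives
\[
(2y_1)^{p-1}\bigl[p y_2 + (p-2)y_1\bigr] = y_3\, (\beta+1)^{p-1}\bigl[(p-1)\beta - 1\bigr],
\]
which is precisely $G(y_1 - y_2, y_1 + y_2) = y_3\, G(1, \sqrt{\omega^2 - \tau^2})$, after observing that the definition $G(z_1, z_2) = (z_1+z_2)^{p-1}[z_1 - (p-1)z_2]$ multiplies both sides by a common minus sign that then cancels. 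The main obstacle is sign bookkeeping: the Neumann flip propagates through the value of $t_0$, through the description of the sector, through the monotonicity check for $u$, and through the swapped arguments of $G$ in the target identity, so one must be careful to pick the positive square root when defining $\beta$ and to verify the correct sign of $y_2 + (1-\tfrac{2}{p})y_1$ throughout the new sector.
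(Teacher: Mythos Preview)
Your proposal is correct and follows essentially the same approach as the paper's own proof, which is split into Lemma~\ref{betaminumtausquarelwrbnd} and the subsequent unnamed lemma. In fact, your intermediate identity $(2y_1)^{p-1}[py_2+(p-2)y_1]=y_3(\beta+1)^{p-1}[(p-1)\beta-1]$ is the correct form---the paper has a typo in its first displayed version of this equation, writing $[\beta-(p-1)]$ on the right, though the next line and the final $G$-identity are stated correctly.
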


This is proven through a series of Lemmas.

\begin{lemma}\label{betaminumtausquarelwrbnd}
$M(y) = t_2 y_2 + t_3 y_3 + t_0$ on the characteristic $y_2 dt_2 + y_3 dt_3 + dt_0 = 0$ can be simplified to $M(y) = \left(\frac{\sqrt{(\yone + u)^2 + \tau^2 (\yone -u)^2}}{\yone -u}\right)^p\ythr,$
where $u$ is the unique solution to the equation $\frac{\ytwo + (1-\frac{2}{p})\yone}{\ythr} = \frac{u + (1-\frac{2}{p})\yone}{(\yone -u)^p}$ and $-\yone < \ytwo < \frac{2-p}{p}\yone.$
\end{lemma}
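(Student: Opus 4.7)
The plan is to mirror the argument of Lemma~\ref{reductionsolution1_2}, substituting the geometry appropriate to Case~$(3_2)$. Here $\yone$ is fixed and a characteristic runs from $U = (\yone, u, (\yone - u)^p)$ on the Dirichlet boundary to a point $W = (\yone, -\yone, w)$ on $\{\yone = -\ytwo\}$. I would first read off $M$ and its partials at $W$: Proposition~\ref{Pogorelov} gives $M_{\ytwo}(W) = t_2$ and $M_{\ythr}(W) = t_3$, while the Neumann condition $M_{\yone} = -M_{\ytwo}$ on $\{\yone = -\ytwo\}$ from Proposition~\ref{Bellmanprop}(iii) yields $M_{\yone}(W) = -t_2$.

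Next I would substitute these values into the homogeneity relation $\yone M_{\yone} + \ytwo M_{\ytwo} + p \ythr M_{\ythr} = pM$ at $W$ to obtain $-2\yone t_2 + p w t_3 = p(-\yone t_2 + w t_3 + t_0)$, which rearranges to $t_0 = \left(1 - \tfrac{2}{p}\right)\yone t_2$. Since $\yone$ is fixed along the characteristic, differentiating gives $dt_0 = \left(1 - \tfrac{2}{p}\right)\yone\, dt_2$, so the characteristic relation $\ytwo\, dt_2 + \ythr\, dt_3 + dt_0 = 0$ becomes
\[
\left[\ytwo + \left(1 - \tfrac{2}{p}\right)\yone\right] dt_2 + \ythr\, dt_3 = 0.
\]
Substituting into $M = t_2 \ytwo + t_3 \ythr + t_0$ then shows, exactly as in the proof of Lemma~\ref{reductionsolution1_2}, that $M(y) = \ythr\bigl[t_3 - t_2 \tfrac{dt_3}{dt_2}\bigr]$, so $M(y)/\ythr$ is constant along the characteristic. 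Evaluating this ratio at $U$ using the Dirichlet boundary values $M(U) = \bigl((\yone + u)^2 + \tau^2(\yone - u)^2\bigr)^{p/2}$ and $\ythr(U) = (\yone - u)^p$ then yields the claimed expression for $M(y)$.

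The implicit equation for $u$ arises by matching $\bigl[\ytwo + \left(1 - \tfrac{2}{p}\right)\yone\bigr]/\ythr$ at the given $y$ to its value at $U$, where $\ytwo = u$ and $\ythr = (\yone - u)^p$. For uniqueness in the sector $-\yone < \ytwo < \tfrac{2-p}{p}\yone$, the natural step is to study
\[
h(u) := \frac{u + \left(1 - \tfrac{2}{p}\right)\yone}{(\yone - u)^p},
\]
compute $h'(u) = (p-1)(\yone + u)/(\yone - u)^{p+1}$, which is strictly positive on $(-\yone, \yone)$, and use the monotonicity of $h$ together with the boundary behavior $h(-\yone) < 0$ and $h(u) \to +\infty$ as $u \to \yone^-$ to conclude that the defining equation admits a unique root $u$ in the appropriate subinterval of $(-\yone, \yone)$.

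The step I expect to require the most care is the homogeneity identity at $W$: one must substitute $\ytwo = -\yone$ into both sides and correctly track the sign flip $M_{\yone}(W) = -M_{\ytwo}(W) = -t_2$, which is precisely what distinguishes Case~$(3_2)$ from Case~$(1_2)$ and fixes the coefficient $\left(1 - \tfrac{2}{p}\right)\yone$ (rather than $\left(\tfrac{2}{p} - 1\right)\yone$) in the characteristic equation. Once this algebraic step is performed correctly, the remainder transcribes the $(1_2)$ argument essentially verbatim, with the monotonicity of $h$ being a short direct calculation rather than a serious obstacle.
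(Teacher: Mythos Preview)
Your proposal is correct and follows essentially the same route as the paper: identify $U$ and $W$, apply the Neumann condition at $W$ to get $M_{\yone}=-t_2$, substitute into the homogeneity relation to obtain $t_0=(1-\tfrac{2}{p})\yone t_2$, and then transcribe the $(1_2)$ argument to deduce that $M/\ythr$ is constant along characteristics. Your uniqueness argument via the explicit derivative $h'(u)=(p-1)(\yone+u)/(\yone-u)^{p+1}$ is a minor variant of the paper's monotonicity check (which rearranges to a function $f(u)$ as in Lemma~\ref{reductionsolution1_2}), but the content is the same.
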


\begin{proof}
Any characteristic, in Case$(3_2),$ goes from $U = (\yone, u, (\yone - u)^p)$ to $W = (\yone, -\yone, w).$  Recall the properties of the Bellman function we derived in Proposition \ref{Bellmanprop}, as we will be using them throughout the proof.  Using the Neumann property and the property from Proposition \ref{Pogorelov}, we get $M_{y_1} = -M_{y_2} = -t_2$ at $W.$  By homogeneity at $W$ we get 
\beno
-py_1 t_2 + pwt_3 + pt_0 = pM(W) = y_1 M_{y_1} + \ytwo M_{y_2} + p\ythr M_{y_3} = -2\yone t_2 + pwt_3.
\eeno

Now we follow the same idea as in Lemma \ref{reductionsolution1_2}, to get
$M(y) = \left(\frac{\sqrt{(\yone + u)^2 + \tau^2 (\yone -u)^2}}{\yone -u}\right)^p\ythr,$
where $u = u(\yone, \ytwo, \ythr)$ is the solution to the equation 

\be
\label{chreq3_2}
\frac{\ytwo + (1-\frac{2}{p})\yone}{\ythr} = \frac{u + (1-\frac{2}{p})\yone}{(\yone -u)^p}.
\ee

Fix $u = -(1-\twrp)\yone,$ then we see that $\ytwo = -(\twrp -1)\yone = u$ is also fixed by (\ref{chreq3_2}).  This means that the characteristics must lie in the sector shown in ~Figure ~\ref{3_2}, since they go from $U$ to $W \in \{y_2 = -y_1\}$.
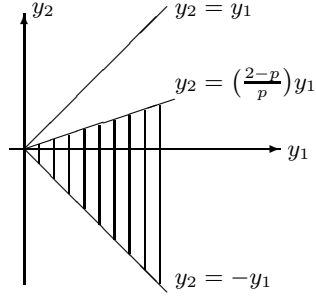
\begin{figure}[h]
\setlength{\unitlength}{0.02cm}
\begin{picture}(200,200)(0,0)
\thinlines
\put(20,10){\vector(0,1){180}} 
\put(10,100){\vector(1,0){180}} 
\put(20,100){\line(1,1){95}} 
\put(20,100){\line(1,-1){95}} 
\put(20,100){\line(3,1){100}} 
\put(30,103){\line(0,-1){12.4}} 
\put(40,106.1){\line(0,-1){25.4}} 
\put(50,109.1){\line(0,-1){39}}
\put(60,112.6){\line(0,-1){52.1}}
\put(70,115.85){\line(0,-1){66}}
\put(80,119.1){\line(0,-1){79.1}}
\put(90,122.35){\line(0,-1){92}}
\put(100,125.6){\line(0,-1){105.6}}
\put(110,128.85){\line(0,-1){118.5}}
\put(120,190){\footnotesize $y_2=y_1$}
\put(120,10){\footnotesize $y_2=-y_1$}
\put(120,140){\footnotesize $y_2=\big(\frac{2-p}{p}\big)y_1$}
\put(150,80){\footnotesize $$}
\put(25,190){\footnotesize $y_2$}
\put(195,95){\footnotesize $y_1$}
\end{picture}
\caption{Range of characteristics in Case $(3_2)$ for $1<p<2.$}
\label{3_2}
\end{figure}
The same argument as in Lemma \ref{reductionsolution1_2} can be used to verify that equation (\ref{chreq3_2}) has a unique solution in the sector $-\yone < \ytwo < \frac{2-p}{p}\yone.$
\end{proof}

\begin{lemma}
$M(y) = \left(\frac{\sqrt{(\yone + u)^2 + \tau^2 (\yone -u)^2}}{\yone -u}\right)^p\ythr$  can be rewritten as $G(\yone -\ytwo, \yone +\ytwo) = \ythr G(1, \sqrt{\om^2 -\tau^2})$ for $-\yone < \ytwo < \frac{2-p}{p} \yone.$ 
\end{lemma}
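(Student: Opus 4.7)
The plan is to mirror closely the argument used in the previous lemma for Case $(1_2)$, the only changes being the characteristic equation at hand (\ref{chreq3_2}) and the ordering of the arguments of $G$.

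First I would establish the trivial bound $\om \geq |\tau|$ directly from the defining formula, since
\beno
\om = \frac{\sqrt{(\yone+u)^2+\tau^2(\yone-u)^2}}{\yone-u} \geq |\tau|\frac{|\yone-u|}{\yone-u}=|\tau|,
\eeno
so that $\sqrt{\om^2-\tau^2}$ is well defined. Next, I would invert the relation $\om^2(\yone-u)^2=(\yone+u)^2+\tau^2(\yone-u)^2$ to solve explicitly for $u$. This gives $(\om^2-\tau^2)(\yone-u)^2=(\yone+u)^2$, and the sign analysis in the sector $-\yone<\ytwo<\frac{2-p}{p}\yone$ (verified via monotonicity of the function $f(u)$ introduced in Lemma~\ref{reductionsolution1_2} applied to the present equation) confirms that $\yone+u\geq 0$ and $\yone-u>0$, so that one may take the positive square root on both sides to obtain
\beno
u=\frac{\sqrt{\om^2-\tau^2}-1}{\sqrt{\om^2-\tau^2}+1}\yone.
\eeno

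The main step is then to substitute this expression for $u$ into the characteristic equation (\ref{chreq3_2}) and simplify. A short computation yields
\beno
u+\left(1-\tfrac{2}{p}\right)\yone = \frac{2\yone}{p}\cdot\frac{(p-1)\sqrt{\om^2-\tau^2}-1}{\sqrt{\om^2-\tau^2}+1},\qquad
(\yone-u)^p=\frac{(2\yone)^p}{(\sqrt{\om^2-\tau^2}+1)^p},
\eeno
so that multiplying the right-hand side of (\ref{chreq3_2}) by $\ythr$ and regrouping gives
\beno
2^{p-1}\yone^{p-1}\bigl[p\ytwo+(p-2)\yone\bigr] = \ythr\bigl(\sqrt{\om^2-\tau^2}+1\bigr)^{p-1}\bigl[(p-1)\sqrt{\om^2-\tau^2}-1\bigr].
\eeno
Finally, I would multiply through by $-1$ and recognize both sides in terms of $G(z_1,z_2)=(z_1+z_2)^{p-1}[z_1-(p-1)z_2]$: the left side equals $G(\yone-\ytwo,\yone+\ytwo)$ (since $(\yone-\ytwo)+(\yone+\ytwo)=2\yone$ and $(\yone-\ytwo)-(p-1)(\yone+\ytwo)=(2-p)\yone-p\ytwo$), while the right side equals $\ythr G(1,\sqrt{\om^2-\tau^2})$. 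This yields the claimed relation.

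The only genuine obstacle is the sign-tracking in the inversion of $\om$: one must be certain that $\yone+u\geq 0$ throughout the relevant sector (rather than merely $|\yone+u|$ appearing), since otherwise the wrong branch of the square root would change the final identity. This is why the sector restriction $-\yone<\ytwo<\tfrac{2-p}{p}\yone$ together with the monotonicity argument from Lemma~\ref{reductionsolution1_2} is invoked before inverting. Everything else is a routine algebraic rewriting, analogous to the corresponding step in Case $(1_2)$.
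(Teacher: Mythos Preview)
Your proposal is correct and follows essentially the same route as the paper: establish $\om\geq|\tau|$, invert to get $u=\frac{\sqrt{\om^2-\tau^2}-1}{\sqrt{\om^2-\tau^2}+1}\yone$, substitute into (\ref{chreq3_2}), and then recognize both sides as values of $G$. Your displayed identity $2^{p-1}\yone^{p-1}[p\ytwo+(p-2)\yone]=\ythr(\sqrt{\om^2-\tau^2}+1)^{p-1}[(p-1)\sqrt{\om^2-\tau^2}-1]$ is in fact the correct one; the paper's corresponding line contains a typo in the bracketed factor on the right, which is silently corrected in its next displayed equation in $x$-variables.
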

\begin{proof}
$\om = \left(\frac{M(y)}{\ythr}\right)^{\frac1p} = \frac{\sqrt{(\yone + u)^2 + \tau^2 (\yone -u)^2}}{\yone -u} \geq |\tau|$

Since $ \yone \pm u \geq 0$ and $\om^2 - \tau^2 \geq 0$, then $u = \frac{\sqrt{\om^2 - \tau^2}-1}{\sqrt{\om^2 - \tau^2}+1}\yone$ by inversion.  Substituting $u$ into $\frac{\ytwo + (1-\frac{2}{p})\yone}{\ythr} = \frac{u + (1-\frac{2}{p})\yone}{(\yone -u)^p}$ gives
\beno
2^{p-1}\yone^{p-1}[p\ytwo+(p-2)\yone] = \ythr(\sqrt{\om^2-\tau^2}+1)^{p-1}[\sqrt{\om^2-\tau^2}-(p-1)]
\eeno
or $(x_1 + x_2)^{p-1}[(p-1)x_2-x_1] = \left[\sqrt{B^{\frac2p} - \left(\tau x_3^{\frac1p}\right)^2}+x_3^\rp\right]^{p-1}\left[(p-1)\sqrt{B^{\frac2p} - \left(\tau x_3^{\frac1p}\right)^2}-x_3^\rp\right].$  Thus, $G(x_1, x_2) = G\left(x_3^\rp, \sqrt{B^{\frac2p} - \left(\tau x_3^{\frac1p}\right)^2}\right)$ or $G(\yone - \ytwo, \yone + \ytwo) = \ythr G(1, \sqrt{\om^2 - \tau^2}).\qedhere$

\end{proof}

As before, we must verify that this partial Bellman function candidate has the restrictive concavity property, so $\yone$ is no longer fixed.  To check restrictive concavity, we must show that $M_{\yone \yone} \leq 0, M_{\ytwo \ytwo} \leq 0, M_{\ythr \ythr} \leq 0$ and $D_1 \geq 0$ (note that $D_2 = 0$ by assumption).  These estimates are verified in the following series of lemmas.   

\begin{lemma}\label{H''3_2}
In Case $(3_2)$ we choose $H(\yone, \ytwo) = G(y_1-y_2,y_1+y_2)$ because of how the implicit solution is defined and obtain $\sign H'' = -\sign (p-2).$
\end{lemma}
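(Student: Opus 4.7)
The plan is to mirror the argument of Lemma \ref{H''1_2} verbatim, since the general formula for $H''$ in terms of the coefficients $\al_i,\beta_i$ has already been established there. First I would identify the correct coefficients: writing $H(y_1,y_2) = G(y_1-y_2, y_1+y_2)$ in the form $G(\al_1 y_1 + \al_2 y_2, \beta_1 y_1 + \beta_2 y_2)$ gives $\al_1=1, \al_2=-1, \beta_1=1, \beta_2=1$.

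Next, I would pin down which partial derivative $H''$ denotes here. By Assumption \ref{degenerateassumption}, Case $(3_2)$ fixes $y_1$ and imposes $D_2=0$, so the sign verification still needed (cf.\ Lemma \ref{D1whenpgreaterthan2} for Case $(1_2)$) is for $D_1$. Consequently $H'' = H_{y_1 y_1}$, which corresponds to taking $i=1$ in the formula of Lemma \ref{H''1_2}. Since $\al_1 = \beta_1 = 1$, the first branch of that formula applies and yields $H'' = 4 G_{z_1 z_2}$.

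Plugging in the explicit expression $G_{z_1 z_2}(z_1,z_2) = -p(p-1)(p-2) z_2 (z_1+z_2)^{p-3}$ from the computation of the derivatives of $G$ in the proof of Lemma \ref{H''1_2}, and setting $z_1 = y_1-y_2$, $z_2 = y_1+y_2$, I obtain
\begin{equation*}
H'' = -4 p(p-1)(p-2)(y_1+y_2)(2 y_1)^{p-3}.
\end{equation*}

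Finally I would read off the sign in the relevant sector. Case $(3_2)$ lives in $-y_1 < y_2 < \frac{2-p}{p}y_1$ with $y_1 > 0$ (see Figure \ref{3_2}), so $y_1+y_2 > 0$ and $(2y_1)^{p-3} > 0$, making the factor $(y_1+y_2)(2y_1)^{p-3}$ strictly positive. Hence $\sign H'' = -\sign(p-2)$, as claimed. There is no real obstacle here; the only point that demands a moment's care is matching the index conventions (confirming that $H''$ stands for $H_{y_1 y_1}$ rather than $H_{y_2 y_2}$, exactly as in Case $(1_2)$ where $y_j$ was fixed and $D_{3-j}$ was the inequality being verified).
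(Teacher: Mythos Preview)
Your proposal is correct and follows essentially the same approach as the paper: identify the coefficients $\alpha_1=1,\alpha_2=-1,\beta_1=1,\beta_2=1$, invoke the formula $H''=4G_{z_1z_2}$ from Lemma~\ref{H''1_2}, and evaluate $G_{z_1z_2}=-p(p-1)(p-2)(y_1+y_2)(2y_1)^{p-3}$. Your added discussion clarifying that $H''$ here means $H_{y_1y_1}$ (since $D_1$ is the remaining inequality to verify) is more explicit than the paper, which leaves this to the reader, but the arguments are otherwise identical.
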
 

\begin{proof}

We already computed 
   \begin{displaymath}
   H'' = \left\{
     \begin{array}{lr}
       4G_{z_1 z_2}, \al_j = \beta_j\\
       0, \al_j = -\beta_j
     \end{array}
   \right.
\end{displaymath} 

in Lemma \ref{H''1_2}.  Since, $\alpha_1 = 1, \alpha_2 = -1, \beta_1 = 1$ and $\beta_2 = 1$ then $G_{\!z_{_1}\!z_{_2}}=-p(p-1)(p-2)(y_1+y_2)(2y_1)^{p-3}.$  \qedhere
\end{proof}

\begin{remark} \label{lowerbndbeta3_2}
In Case $(3_2), \beta > \frac{1}{p-1}$ in the sector $-\yone < \ytwo < \frac{2-p}{p}\yone,$ where $\beta := \sbeta.$  Equivalently, $B(x_1,x_2,x_3) \geq ((p^*-1)^2+\tau^2)^{\frac p2}x_3$ in $-\yone < \ytwo < \frac{2-p}{p}\yone.$
\end{remark}
This is trivial since
\beno
(\beta + 1)^{p-1}[1-(p-1)\beta] = G(1,\beta) & = & \frac{1}{\ythr}G(\yone-\ytwo, \yone+\ytwo)\\ 
& = & (2y_1)^{p-1}[(p-2)y_1 + py_2]<0.\\
\eeno

Now we have enough information to check the sign of $D_1.$  We will start limiting the values of $\tau,$ since it will be essential for having the restrictive concavity of the parital Bellman candidate from Case $(2_2)$ (see Remark \ref{limittauremark}).

\begin{lemma}\label{D13_2}
$D_1 > 0$ in Case $(3_2)$ for all $|\tau| \leq 1.$
\end{lemma}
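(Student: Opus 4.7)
The plan is to follow the template of Lemma \ref{D1whenpgreaterthan2}, now applied to Case $(3_2)$, where $H(y_1,y_2) = G(y_1 - y_2, y_1 + y_2)$ and $\Phi(\omega) = G(1,\beta)$ with $\beta = \sqrt{\omega^2 - \tau^2}$. Lemma \ref{partialsofM} gives $\sign(D_1) = \sign(H'') \cdot \sign(\Lambda)$, where $\Lambda := (p-1)\Phi' - \omega\Phi''$, and Lemma \ref{H''3_2} already identifies $\sign(H'') = -\sign(p-2)$. Thus the whole task reduces to computing $\Lambda$ and determining its sign.

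First I would compute $\Phi'$ and $\Phi''$ by direct differentiation, using $\beta'(\omega) = \omega/\beta$ and the derivatives of $G$ recorded in the proof of Lemma \ref{H''1_2}. The first derivative simplifies to $\Phi'(\omega) = -p(p-1)\omega(1+\beta)^{p-2}$, and then a short computation for $\Phi''$ followed by forming $(p-1)\Phi' - \omega\Phi''$ should yield, after cancellation,
\[
\Lambda = \frac{p(p-1)(p-2)\omega(1+\beta)^{p-3}}{\beta}(\tau^2 - \beta).
\]
I expect this to come out considerably cleaner than the corresponding formula in Case $(1_2)$: the bracket factor collapses to something linear in $\beta$, so no auxiliary estimate analogous to Lemma \ref{gtaup} is needed. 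Verifying this algebraic collapse is really the only technical step in the proof, and it is the step I would treat as the main (mild) obstacle.

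From this identity, $\sign(\Lambda) = \sign\bigl((p-2)(\tau^2 - \beta)\bigr)$, and combining with $\sign(H'') = -\sign(p-2)$ gives $\sign(D_1) = \sign(\beta - \tau^2)$. To close the argument I would invoke Remark \ref{lowerbndbeta3_2}, which guarantees $\beta > \frac{1}{p-1}$ throughout the sector of Case $(3_2)$. In the relevant range $1 < p < 2$ this yields $\beta > \frac{1}{p-1} > 1 \geq \tau^2$ whenever $|\tau| \leq 1$, and therefore $D_1 > 0$. Once the simple form of $\Lambda$ is established, the conclusion is immediate from the sector bound on $\beta$.
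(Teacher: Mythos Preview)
Your computation of $\Lambda$ and the resulting identification $\sign(D_1)=\sign(\beta-\tau^2)$ match the paper exactly, and for $1<p<2$ your use of Remark~\ref{lowerbndbeta3_2} to conclude $\beta>\frac{1}{p-1}>1\geq\tau^2$ is precisely the paper's argument. The one gap is that the lemma is stated for $1<p<\infty$, not only $1<p<2$; the paper explicitly treats $p>2$ separately, since Remark~\ref{lowerbndbeta3_2} then gives only $\beta>\frac{1}{p-1}<1$, which does not dominate $\tau^2$. For $p>2$ the paper instead uses the form of $M$ from Lemma~\ref{betaminumtausquarelwrbnd} to argue directly that $(y_1+u)^2+\tau^4(1-\tau^2)(y_1-u)^2>0$ (valid when $|\tau|\leq 1$) is equivalent to $\omega^2>\tau^2(1+\tau^2)$, i.e.\ $\beta>\tau^2$. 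If you intend the lemma only for $1<p<2$, your proof is complete and identical to the paper's; otherwise this additional paragraph is needed.
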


\begin{proof}
We use the partial derivatives of $G$ computed in the proof of Lemma \ref{H''1_2} to make the computations of $\Phi'$ and $\Phi''$ easier.
\be 
\Phi(\om) & = & G(1, \beta)\notag \\ \label{Phiofomega3_2}
\Phi'(\om) & = & -p(p-1) \om [\beta + 1]^{p-2}\\ \notag 
\Phi''(\om) & = & -\frac{p(p-1)(1+\beta)^{p-3}}{\beta} \left[\beta(1+\beta)+(p-2)\om^2\right] \\ \notag
\Lambda & = & (p-1)\Phi' - \om \Phi''\\\notag
& = & -p (p-1)^2 \om(\beta + 1)^{p-2}+ \frac{p(p-1)\om (1+\beta)^{p-3}}{\beta}\left[\beta(1+\beta)+(p-2)\om^2\right]\\ \notag
& = & \frac{p(p-1)\om(1+\beta)^{p-3}}{\beta}\left[-(p-1)(1+\beta)\beta+\beta(1+\beta)+(p-2)\om^2\right]\\
& = & -\frac{p(p-1)(p-2)\om(1+\beta)^{p-3}[\beta -\tau^2]}{\beta}\label{Biglambda3_2}
\ee

Now we need to determine the sign of $\beta - \tau^2$ is for $1<p<\infty.$ By Remark \ref{lowerbndbeta3_2}, $\beta > \frac{1}{p-1} \geq \tau^2$ for $|\tau| \leq 1$ and $1<p<2.$ But, what about $p>2$? Using the form of the solution, $M,$ in Lemma \ref{betaminumtausquarelwrbnd} we obtain $(\yone + u)^2 + \tau^4(1-\tau^2)(\yone-u)^2>0$
\beno
& \Longleftrightarrow \frac{(\yone +u)^2+\tau^2(\yone-u)^2}{(\yone-u)^2}\geq\tau^2(1+\tau^2)\\
& \Longleftrightarrow \omega = \left(\frac{M(y)}{\ythr}\right)^{\frac 2p}>\tau^2(1+\tau^2)\\
& \Longleftrightarrow\beta -\tau^2>0,
\eeno
where $u$ is the unique solution to $\frac{\ytwo+(1-\frac 2p)\yone}{\ythr}=\frac{u+(1-\frac 2p)\yone}{\ythr}$ and $|\tau| \leq 1.$   Thus, $\sign D_1 = \sign H'' \sign \Lambda =[-\sign(p-2)]^2$ by (\ref{Di}) and Lemma \ref{H''3_2}. \qedhere
\end{proof}

The following lemma restricts the $p-$values for which our solution is a Bellman function candidate to $1<p<2.$
\begin{lemma}\label{checkconcavity3_2}
$\sign M_{\yone \yone} = \sign M_{\ytwo \ytwo} = \sign M_{\ythr \ythr} = \sign(p-2)$ in Case $(3_2)$ for all $|\tau| \leq 1.$  Consequently, $M$ is a Bellman function candidate for $1<p<2$ but not for $2<p<\infty,$ since it wouldn't satisfy the restrictive concavity needed.
\end{lemma}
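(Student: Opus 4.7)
The plan is to mirror the argument of Lemma \ref{My3y3pgreaterthan2} (which handled Case $(1_2)$), with the crucial difference that in Case $(3_2)$ the derivative $\Phi'$ is \emph{negative} rather than positive, and this single sign flip is what reverses the concavity behavior relative to $p-2$. Concretely, I will apply the partial-derivative formulas (\ref{M33}) and (\ref{Mii}) from Lemma \ref{partialsofM} to the function $H(\yone,\ytwo)=G(\yone-\ytwo,\yone+\ytwo)$ chosen in Case $(3_2)$, using the definitions $R_1 = 1/\Phi'$ and $R_2=-\Phi''/(\Phi')^2$, so that the bracket $\omega R_2 + (p-1)R_1$ appearing in both formulas equals $\Lambda/(\Phi')^2$, where $\Lambda=(p-1)\Phi'-\omega\Phi''$ was computed in (\ref{Biglambda3_2}).

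With this substitution the two partials rewrite as
\beno
M_{\ythr\ythr} = \frac{p\omega^{p-2}H^2}{\ythr^3}\,\frac{\Lambda}{(\Phi')^3},
\qquad
M_{y_iy_i} = \frac{p\omega^{p-2}}{\ythr(\Phi')^3}\,\Bigl[\Lambda(H')^2 + \omega\ythr H''(\Phi')^2\Bigr].
\eeno
The sign analysis is then purely bookkeeping with three inputs already on the table: (a) from (\ref{Phiofomega3_2}) one reads $\Phi'(\omega)=-p(p-1)\omega(\beta+1)^{p-2}<0$, so $(\Phi')^3<0$; (b) from Lemma \ref{D13_2}, using $\beta-\tau^2>0$ for $|\tau|\leq 1$, one has $\sign\Lambda = -\sign(p-2)$; (c) from Lemma \ref{H''3_2}, $\sign H'' = -\sign(p-2)$. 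Combining (a)--(c) immediately yields $\sign M_{\ythr\ythr} = -\sign(p-2)/(-1) = \sign(p-2)$, and since $\Lambda(H')^2$ and $\omega\ythr H''(\Phi')^2$ both carry sign $-\sign(p-2)$, the bracketed quantity in $M_{y_iy_i}$ has that same sign, so dividing by $(\Phi')^3<0$ gives $\sign M_{y_iy_i}=\sign(p-2)$ for $i=1,2$.

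Finally, I would conclude as follows: for $1<p<2$ all three second derivatives are negative and, together with $D_2=0$ (by assumption) and $D_1>0$ (Lemma \ref{D13_2}), this yields the restrictive concavity of Proposition \ref{D_i}, so the Monge--Amp\`ere solution produced in Proposition \ref{solution3_2} is a genuine partial Bellman function candidate on the sector $-\yone<\ytwo<\frac{2-p}{p}\yone$. For $2<p<\infty$ the same second derivatives become positive, so the solution fails restrictive concavity and cannot be a candidate on this sector; this is consistent with the fact that in that $p$-range the relevant partial candidate instead came from Case $(1_2)$. The only nontrivial point in the whole argument is remembering that the sign of $\Phi'$ has flipped compared to Case $(1_2)$; once that is tracked, the rest is the same sign calculation as in Lemma \ref{My3y3pgreaterthan2}, so no genuine obstacle is expected.
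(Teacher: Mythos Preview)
Your proposal is correct and follows essentially the same route as the paper: you rewrite $M_{\ythr\ythr}$ and $M_{y_iy_i}$ via (\ref{M33}) and (\ref{Mii}) in terms of $\Lambda/(\Phi')^3$ and then read off the signs using $\Phi'<0$, $\sign\Lambda=-\sign(p-2)$, and $\sign H''=-\sign(p-2)$, exactly as the paper does. The only difference is cosmetic --- you spell out explicitly that both summands in the bracket $\Lambda(H')^2+\omega\ythr H''(\Phi')^2$ carry the same sign, whereas the paper leaves this implicit.
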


\begin{proof}
By (\ref{M33}), (\ref{Phiofomega3_2}), (\ref{Biglambda3_2})), 
\beno
M_{\ythr \ythr} = \frac{p \om^{p-2}R_1^2H^2}{y_3^3}\left[\frac{\Lambda}{\Phi'}\right],\\
\eeno
giving $\sign M_{\ythr \ythr} = (-1)[-\sign(p-2)].$  By (\ref{Mii}), for $i = 1,2,$
\beno
M_{y_i y_i} &=& \frac{p \omega^{p-2}R_1}{\ythr}\left[(\omega R_2 + (p-1)R_1)(H')^2+\omega \ythr H''\right]\\
&=& \frac{p \omega^{p-2}}{\ythr (\Phi')^3}\left[\Lambda(H')^2+\omega \ythr H''(\Phi')^2\right],\\
\eeno
giving $\sign M_{y_i y_i} = (-1)[-\sign(p-2)],$ since $\Phi' < 0.$ \qedhere
\end{proof}
Now that we have a partial Bellman function candidate for $1<p<2,$ from Case $(3_2),$ satisfying all of the properties of the Bellman function, including restrictive concavity, we can turn our attention to Case $(2_2).$  From Case $(2_2)$ we will get a Bellman candidate on all $\Xi_+,$ or part of it, depending on the $\tau-$ and $p-$values.  The partial Bellman candidate, from Case $(2_2),$ turns out to be the missing half for Case $(3_2).$  We already have the solution for Case $(2)$ from Lemma \ref{case2solution}, but the value of the constant is needed before we can progress further.

\subsection{\bf Case (2) for $1<p<2$}\label{partial2_2subsec}

\begin{lemma}\label{constantfor2_2whenplessthan2}
If $1<p<2$ then in Case $(2_2),$ the value of the constant in Lemma \ref{case2solution} is $c = \left(\frac{1}{(p-1)^2} + \tau^2\right)^{\frac p2}.$
\end{lemma}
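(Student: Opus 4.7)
The plan is to follow verbatim the strategy used in the analogous lemma for $2<p<\infty$, only now gluing the explicit Case $(2_2)$ candidate from Lemma \ref{case2solution} to the implicit Case $(3_2)$ candidate from Proposition \ref{solution3_2}. For $1<p<2$ the relevant transition line is $y_2=\frac{2-p}{p}y_1$ (the natural boundary of Case $(3_2)$ established in Lemma \ref{betaminumtausquarelwrbnd}), so $c$ is determined by forcing the two partial candidates to agree along that line.

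First I would evaluate the implicit Case $(3_2)$ relation on $y_2=\frac{2-p}{p}y_1$. Here $y_1-y_2=\frac{2(p-1)}{p}y_1$ and $y_1+y_2=\frac{2}{p}y_1$, so
\begin{equation*}
G(y_1-y_2,\,y_1+y_2) = (2y_1)^{p-1}\Bigl[\tfrac{2(p-1)}{p}y_1 - (p-1)\cdot\tfrac{2}{p}y_1\Bigr] = 0.
\end{equation*}
Since $y_3>0$, the relation $G(y_1-y_2,y_1+y_2)=y_3\,G(1,\sqrt{\omega^2-\tau^2})$ forces $G(1,\sqrt{\omega^2-\tau^2})=0$, hence $(1+\sqrt{\omega^2-\tau^2})^{p-1}[1-(p-1)\sqrt{\omega^2-\tau^2}]=0$. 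The first factor is positive, so $\sqrt{\omega^2-\tau^2}=\tfrac{1}{p-1}$, yielding $\omega=\bigl(\tfrac{1}{(p-1)^2}+\tau^2\bigr)^{1/2}$. Consequently, on the gluing line,
\begin{equation*}
M(y_1,\tfrac{2-p}{p}y_1,y_3)=\omega^p\,y_3=\Bigl(\tfrac{1}{(p-1)^2}+\tau^2\Bigr)^{p/2}y_3.
\end{equation*}

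Next I would evaluate the explicit Case $(2_2)$ candidate at the same line. Using the identity $(1+\tau^2)[y_1^2+2\gamma y_1 y_2+y_2^2]=(y_1+y_2)^2+\tau^2(y_1-y_2)^2$ noted in \eqref{alternateexplicitform}, together with $(y_1+y_2)^2+\tau^2(y_1-y_2)^2=\tfrac{4y_1^2}{p^2}[1+\tau^2(p-1)^2]$ and $(y_1-y_2)^p=\bigl(\tfrac{2(p-1)}{p}\bigr)^p y_1^p$, I get
\begin{equation*}
M(y_1,\tfrac{2-p}{p}y_1,y_3) = \Bigl(\tfrac{2y_1}{p}\Bigr)^p\bigl[1+\tau^2(p-1)^2\bigr]^{p/2} + c\Bigl[y_3 - \bigl(\tfrac{2(p-1)}{p}\bigr)^p y_1^p\Bigr].
\end{equation*}

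Finally, matching the two expressions as functions of the free variables $y_1,y_3>0$, the $y_3$-coefficient immediately gives $c=\bigl(\tfrac{1}{(p-1)^2}+\tau^2\bigr)^{p/2}$, and a short check shows the $y_1^p$-coefficients also cancel with the same value of $c$, so the match is consistent. There is no genuine obstacle in this lemma itself, it is a direct matching computation; the real work (restrictive concavity of the glued candidate and the $\tau$-restriction $|\tau|\le\tfrac12$) is carried out elsewhere, just as in the $p>2$ case.
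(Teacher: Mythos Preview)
Your proof is correct and follows essentially the same approach as the paper: evaluate the implicit Case $(3_2)$ relation at the transition line $y_2=\frac{2-p}{p}y_1$ to obtain $\omega=\bigl(\tfrac{1}{(p-1)^2}+\tau^2\bigr)^{1/2}$, then match with the explicit Case $(2_2)$ formula to read off $c$. Your additional verification that the $y_1^p$-coefficients are consistent is a small extra check that the paper leaves implicit in its ``now just solve for $c$''.
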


\begin{proof}
If $M(y) = (1+\tau^2)^{\frac p2}[y_1^2 + 2 \gamma y_1 y_2 + y_2^2]^{\frac p2} + c [y_3 - (y_1 - y_2)^p]$ (where $\gamma = \frac{1-\tau^2}{1+\tau^2}$) is to be a candidate or partial candidate, then it must agree, at $y_2 = \frac{2-p}{p}\yone,$ with the solution $M$ given implicitly by the relation $G(\yone -\ytwo, \yone +\ytwo) = \ythr G(1, \sqrt{\om^2 -\tau^2})$, from Proposition \ref{solution3_2}.  At $y_2 = \frac{2-p}{p}\yone,$ 
\beno
(\sbeta + 1)^{p-1}[1-(p-1)\sbeta] &=& G(1,\sbeta)\\ 
&=& \frac{1}{\ythr}(2y_1)^{p-1}[(2-p)y_1 + (p-2)y_1] = 0.\\  
\eeno
Since $\sbeta + 1 \neq 0$ then $\sbeta = \frac{1}{p-1},$ which implies $\om = \left(\frac{1}{(p-1)^2} + \tau^2\right)^{\frac 12}.$  So,  
\beno
\left(\frac{1}{(p-1)^2} + \tau^2\right)^{\frac p2} \ythr &=& \om^p \ythr\\ 
&=& M(\yone, \frac{2-p}{p}\yone, \ythr)\\
&=& \left[\left(\frac 2p y_1\right)^2 + \tau^2\left(\frac{2(p-1)}{p} \yone\right)^2\right]^{\frac 2p} + c\left[\ythr - \left(\frac{2(p-1)}{p} \yone\right)^p\right].\\
\eeno
Now just solve for $c.$ \qedhere
\end{proof}

In the following Lemma the value of $\tau$ has to be restricted to $|\tau| \leq 1,$ so that restrictive concavity is satisfied for our Bellman candidate.  Actually, the $\tau-$values play an even bigger role.  Depending on the value of $(\tau, p) \in [-1,1]\times (1,2),$ there is either one or two Bellman function candidates.  For $(\tau, p) \in B,$ from Figure \ref{signMy2y2pless2}, there is a partial Bellman candidate arising from Case $(2_2)$.  So we can glue this together with the other partial candidate obtained in Case $(3_2).$  This gives a Bellman candidate, as before, having characteristics as in ~Figure ~\ref{gluedchar2_2and3_2}.   For $(\tau, p) \in A \cup C$ the candidate obtained from Case $(2_2)$ maintains restrictive concavity throughout $\Xi_+$ and is therefore requires no gluing.  To avoid the difficulty of determining which candidate to choose and how to determine the optimal constant from Case $(2)$, we restrict $(\tau, p)$ to region $B$, or require that $|\tau| \leq \frac 12.$  
\begin{figure}[h]

\setlength{\unitlength}{0.8cm}

\begin{picture}(9,6)(-5,-1)
\put(-4,0){\vector(1,0){9}} 
\put(5.2,-0.1){$\tau$} 
\put(0,-1){\vector(0,1){6}} 
\put(0.2,5.1){$p$} 

\put(0.2,4.1){$2$} 
\put(0.2,2.1){$1.5$} 
\put(4.1,-0.5){$1$} 
\put(2.1,-0.5){$0.5$} 
\put(-4.1,-0.5){$-1$} 
\put(-2.1,-0.5){$-0.5$} 
\put(-3.4,2.1){$A$}
\put(3.2,2.1){$C$}
\put(-1.5,1.3){$B$} 
\put(0.85,1.3){$B$} 

\put(2,-.1){\line(0,1){.2}}
\put(-2,-.1){\line(0,1){.2}}

\put(-0.1,2){\line(1,0){.2}}

\qbezier(4,0)(2,0)(2,4) 
\qbezier(-4,0)(-2,0)(-2,4) 

\put(4,-.1){\line(0,1){4.1}} 
\put(-4,-.1){\line(0,1){4.1}}
\put(-4,4){\line(1,0){8}}

\end{picture}

\caption{Splitting $[-1, 1] \times (1,2)$ in the $(\tau \times p)$--plane into three regions $A, B$ and $C.$  The curves separating regions $A, B$ and $C$ are where $M_{\ytwo \ytwo} =0$ in Case $(2_2).$} 
\label{signMy2y2pless2}

\end{figure}
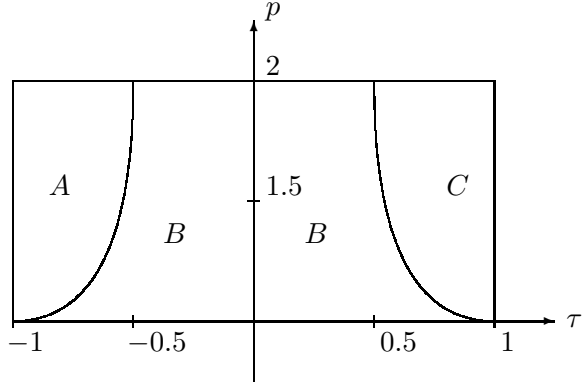
Recall that the partial Bellman candidate, $M,$ obtained from Case $(2_2),$ for $1<p<2,$ satisfies $M_{y_i \ythr} = M_{\ythr \ythr} = 0$ and hence $D_i =0,$ for $i = 1,2.$  So all that still needs to be checked for restrictive concavity is the sign of $M_{\yone \yone}$ and $M_{\ytwo \ytwo}.$  Since $M_{\yone \yone} \leq M_{\ytwo \ytwo}$, then we just need to show that $M_{\ytwo \ytwo} \leq 0$ on $\frac{2-p}{p}\yone \leq \ytwo \leq \yone$ in $\Xi_+.$  This is considered in the following Lemmas. 

\begin{lemma}\label{case2_2partialswhenplessthan2}
In Case $(2_2), M_{\ytwo \ytwo}(\yone, \frac{2-p}{p}\yone, \ythr) \leq 0$ for $|\tau|\leq 1$ and $1<p<2.$
\end{lemma}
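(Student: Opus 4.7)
The plan is to compute $M_{y_2 y_2}$ directly from the explicit formula in Case $(2_2)$ and evaluate it at the critical line $\ytwo=\tfrac{2-p}{p}\yone$. By Lemma~\ref{case2solution} and Lemma~\ref{constantfor2_2whenplessthan2},
\[
M(y) = (1+\tau^2)^{p/2}\bigl[y_1^2+2\gamma y_1 y_2+y_2^2\bigr]^{p/2} + c\bigl[y_3-(y_1-y_2)^p\bigr],
\]
with $\gamma=\tfrac{1-\tau^2}{1+\tau^2}$ and $c=\bigl(\tfrac{1}{(p-1)^2}+\tau^2\bigr)^{p/2}$. Since $M$ is linear in $y_3$, the derivative $M_{y_2 y_2}$ is independent of $y_3$, and setting $s := y_1^2+2\gamma y_1 y_2+y_2^2$, routine differentiation yields
\[
M_{y_2 y_2} = (1+\tau^2)^{p/2}\bigl[p(p-2)s^{p/2-2}(\gamma y_1+y_2)^2 + p\,s^{p/2-1}\bigr] - cp(p-1)(y_1-y_2)^{p-2}.
\]

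Next I would substitute $y_2^\ast=\tfrac{2-p}{p}y_1$ and compute the special values
\[
y_1-y_2^\ast = \tfrac{2(p-1)}{p}y_1,\quad \gamma y_1+y_2^\ast = \tfrac{2[1-(p-1)\tau^2]}{p(1+\tau^2)}y_1,\quad s^\ast = \tfrac{4[1+(p-1)^2\tau^2]}{p^2(1+\tau^2)}y_1^2,
\]
where the last identity follows from the expansion $p^2+2p\gamma(2-p)+(2-p)^2$ after collecting terms over the common denominator $1+\tau^2$. Substituting these into $M_{y_2 y_2}$ and cancelling all common factors (powers of $1+\tau^2$, $p$, $2$ and $y_1^{p-2}$), the sign of $M_{y_2 y_2}$ at $y_2^\ast$ is governed, up to a positive multiple, by
\[
P(\tau^2) := (p-1)\bigl[(p-1)+(-p^2+4p-2)\tau^2+(p-1)^3\tau^4\bigr] - \bigl[1+(p-1)^2\tau^2\bigr]^2.
\]
The decisive observation is that $P$ telescopes: the constant term is $(p-1)^2-1=p(p-2)$, the $\tau^4$ contributions cancel exactly, and the $\tau^2$ coefficient reduces via $(p-1)[(-p^2+4p-2)-2(p-1)]=-p(p-1)(p-2)$, yielding
\[
P(\tau^2) = p(p-2)\bigl[1-(p-1)\tau^2\bigr].
\]

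To conclude, for $1<p<2$ the factor $p(p-2)$ is negative, and for $|\tau|\le 1$ together with $p<2$ one has $(p-1)\tau^2<1$, so $1-(p-1)\tau^2>0$; hence $M_{y_2 y_2}(\yone,\tfrac{2-p}{p}\yone,\ythr)\leq 0$. The only genuinely delicate step in the plan is the algebraic bookkeeping leading to $P(\tau^2)$: the intermediate expressions involve awkward fractional powers, yet they must collapse into the clean quadratic identity above, which is precisely what exposes the desired sign.
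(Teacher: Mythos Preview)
Your proof is correct and follows exactly the approach the paper outlines: compute $M_{y_2y_2}$ from the explicit Case~$(2_2)$ formula, evaluate at $y_2=\tfrac{2-p}{p}y_1$, and check the sign. The paper merely writes down the expression $M_{y_2y_2}=p(1+\tau^2)^{p/2}f_1^{(p-4)/2}f_2-p(p-1)c\,f_3^{p-2}$ (with $f_1=s$, $f_2=(p-2)(\gamma y_1+y_2)^2+f_1$, $f_3=y_1-y_2$) and then appeals to ``direct calculations''; you have actually carried those calculations through to the clean factorization $P(\tau^2)=p(p-2)\bigl[1-(p-1)\tau^2\bigr]$, which makes the sign transparent.
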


\begin{proof}
The solution $M$ that we get from $(2_2),$ when $1<p<2,$ is obtained from Lemmas \ref{case2solution} and \ref{constantfor2_2whenplessthan2}.  Let $\gamma = \frac{1-\tau^2}{1+\tau^2}, f_1(y) = y_1^2+y_2^2+2\gamma\yone \ytwo, f_2(y) = (p-2)(\ytwo + \gamma\yone)^2 + f_1(y)$ and $f_3(y) = y_1-y_2.$  Then 
\beno
M_{\ytwo \ytwo} = p(1+\tau^2)^{\frac p2} f_1^{\frac{p-4}2}f_2 - p(p-1)\left(\frac 1{(p-1)^2}+\tau^2\right)^{\frac p2} f_3^{p-2}.
\eeno

By direct calculations one can verify, $M_{\ytwo \ytwo}(\yone, \frac{2-p}{p}\yone, \ythr) \leq 0$ when $|\tau|\leq 1.$ \qedhere
\end{proof}

\begin{remark}\label{limittauremark}
Note that Lemma \ref{case2_2partialswhenplessthan2} is false for $p$ near $2$ when $|\tau|$ larger than 1, so we cannot take a larger value and still maintain the restrictive concavity.  
\end{remark}

\begin{lemma}\label{case2_2glueingto3_2}
In Case $(2_2), M_{\ytwo \ytwo}(\yone, c\yone, \ythr) \leq 0,$ for all $c \in \left[\frac{2-p}{p}, 1\right].$  
\end{lemma}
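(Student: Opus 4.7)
The plan is to reduce the claim to a one-variable inequality by homogeneity, and then establish it by a subinterval analysis in the spirit of Lemma \ref{2_2glue3}, adapted to the dual range $1<p<2$. From Lemma \ref{constantfor2_2whenplessthan2} the candidate is
\[
M(y) \;=\; (1+\tau^2)^{\frac p2}\bigl[\yone^2+2\gamma\yone\ytwo+\ytwo^2\bigr]^{\frac p2} \;+\; \Bigl(\tfrac{1}{(p-1)^2}+\tau^2\Bigr)^{\frac p2}\bigl[\ythr-(\yone-\ytwo)^p\bigr],
\]
and differentiating twice in $\ytwo$ kills the $\ythr$-term, leaving an expression homogeneous of degree $p-2$ in $(\yone,\ytwo)$. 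Writing $\ytwo=c\yone$ with $\yone>0$ therefore gives $M_{\ytwo\ytwo}(\yone,c\yone,\ythr) = \yone^{p-2}\,F(c)$ for a function $F$ depending only on $c,p,\tau$, and the lemma reduces to showing $F(c)\le 0$ on $c\in[(2-p)/p,1]$.

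For the endpoint values: at $c=(2-p)/p$ the inequality $F(c)\le 0$ is precisely Lemma \ref{case2_2partialswhenplessthan2} (proved there for $|\tau|\le 1$); at the other end, since $p-2<0$, the negative contribution $-p(p-1)\bigl((p-1)^{-2}+\tau^2\bigr)^{p/2}(1-c)^{p-2}$ blows up to $-\infty$ as $c\to 1^-$, so $F(c)<0$ throughout some left-neighbourhood of $1$. The work lies in bridging the two ends.

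To do this I pass to $x$-coordinates via $x_1=\yone-\ytwo=\yone(1-c)$, $x_2=\yone+\ytwo=\yone(1+c)$, and set $s=x_2/x_1=(1+c)/(1-c)$, so that $c\in[(2-p)/p,1)$ corresponds bijectively to $s\in[1/(p-1),\infty)$. With the further substitution $t=s^2+\tau^2$, $F$ becomes the difference between a power of $t$ times a polynomial in $t$ and a separable term of the form $C_{p,\tau}x_1^{p-2}$. I then split $[1/(p-1),\infty)$ into a small number of pieces, replace the awkward $\gamma$- and cross-terms by crude uniform upper bounds on each piece, and reduce the problem to showing that the resulting one-variable majorant $g_j(t)$ is monotone with a favourable sign at the endpoint of the piece. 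The restriction $|\tau|\le\tfrac12$ inherited from Proposition \ref{solutionsfrom2_2and3_2} is exactly what lets these crude bounds close across all subintervals at once.

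The main obstacle will be the intermediate regime, where neither the boundary value at $c=(2-p)/p$ nor the singularity at $c=1$ is directly in play. The prefactor $\gamma=(1-\tau^2)/(1+\tau^2)$ varies with $\tau$, and as $|\tau|$ approaches $\tfrac12$ the positive and negative parts of $F$ become comparable. I expect the tightest step to be a narrow sub-strip where $f_2(1,c)=(p-1)c^2+2(p-1)\gamma c+(p-2)\gamma^2+1$ is near zero, together with a careful bound on $(1+c^2+2\gamma c)^{(p-4)/2}$ there. Pure monotonicity of $F$ itself would collapse the argument, but $F'$ has no evident sign, so I would fall back on the majorant estimate rather than try to prove monotonicity directly.
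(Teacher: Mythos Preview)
Your reduction to a one-variable inequality via homogeneity is fine, and the endpoint observations are correct. But your proposal is a plan rather than a proof: you anticipate a difficult intermediate regime and propose subinterval majorants without carrying them out. More importantly, you explicitly dismiss the possibility of monotonicity of $F$ itself, saying ``$F'$ has no evident sign''---and that is exactly what the paper exploits.

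The paper's argument is much shorter. Using the notation of Lemma~\ref{case2_2partialswhenplessthan2}, one rewrites $M_{\ytwo\ytwo}\le 0$ (after dividing by positive quantities) as
\[
(1+\tau^2)^{\frac p2}\,f_3^{\,2-p}\cdot\frac{f_2}{f_1^{(4-p)/2}} \;\le\; (p-1)\Bigl(\tfrac1{(p-1)^2}+\tau^2\Bigr)^{\frac p2}.
\]
The right side is constant in $\ytwo$. For the left side, with $\yone$ fixed: since $1<p<2$, the factor $f_3^{\,2-p}=(\yone-\ytwo)^{2-p}$ is decreasing in $\ytwo$; and a short computation shows $f_2/f_1^{(4-p)/2}$, viewed as a function of $\ytwo$, is increasing on $(-\infty,-\gamma\yone)$ and decreasing on $(-\gamma\yone,\infty)$. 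Since $-\gamma\yone<0<\tfrac{2-p}{p}\yone$ for $|\tau|\le 1$, the second factor is also decreasing on $[\tfrac{2-p}{p}\yone,\yone]$. Hence the entire left side is monotone decreasing on that interval, so the inequality at $\ytwo=\tfrac{2-p}{p}\yone$ (Lemma~\ref{case2_2partialswhenplessthan2}) propagates to all $c\in[\tfrac{2-p}{p},1]$.

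So the ``evident sign'' you were looking for appears once you factor the expression as a product of two monotone pieces rather than trying to differentiate the whole thing at once. Your subinterval/majorant strategy could likely be pushed through, but it is considerably more laborious and, as written, remains incomplete.
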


\begin{proof}
Using $M_{\ytwo \ytwo}$ from Lemma \ref{case2_2partialswhenplessthan2} we see that $M_{\ytwo \ytwo} \leq 0$ is equivalent to 
\beno
(1+\tau^2)^{\frac p2}f_3^{2-p}\frac{f_2}{f_1^{\frac {4-p}2}}- p(p-1)\left(\frac 1{(p-1)^2}+\tau^2\right)^{\frac p2} \leq 0
\eeno
Observe that the function $f_2/(f_1^{\frac{4-p}{2}})$ is strictly positive, has a horizontal asymptote at the $y_2$-axis, increases on $(-\infty, -\gamma)$, and decreases on $(-\gamma, \infty).$  As $\ytwo$ increases from $\frac{2-p}p
$ to $1$, $f_3^{2-p}$ and $\frac{f_2}{f_1^{\frac 2{4-p}}}$ both decrease.  Since $M_{\ytwo \ytwo}(\yone, \frac{2-p}{p}\yone, \ythr) \leq 0,$ as shown in Lemma \ref{case2_2partialswhenplessthan2}, the result follows. \qedhere
\end{proof}

\begin{lemma}\label{firstsolutionplessthan2}
The Monge--Amp\`ere solution in Case $(2_2)$ yields the following results for $1<p<2.$  $M_{\ytwo \ytwo}(\yone, \yone, \ythr) < 0$ for $|\tau| \leq 1$ and $M_{\ytwo \ytwo}(\yone, -\yone, \ythr)>0$ for $|\tau| \leq \frac 12$
\end{lemma}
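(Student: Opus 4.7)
The plan is to plug the two specified points into the explicit formula for $M_{\ytwo\ytwo}$ from Lemma~\ref{case2_2partialswhenplessthan2},
\[
M_{\ytwo\ytwo} = p(1+\tau^2)^{p/2}f_1^{(p-4)/2}f_2 \;-\; p(p-1)\left(\tfrac{1}{(p-1)^2}+\tau^2\right)^{p/2}f_3^{p-2},
\]
where $f_3(y) = \yone - \ytwo$.

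The first assertion is essentially immediate. At $(\yone,\yone,\ythr)$ one has $f_3 = 0$; since $1<p<2$ forces $p-2<0$, the factor $f_3^{p-2}$ diverges to $+\infty$, while the first term stays finite because $f_1(\yone,\yone) = 4\yone^2/(1+\tau^2) > 0$. Hence $M_{\ytwo\ytwo}(\yone,\yone,\ythr) = -\infty$, trivially negative for every admissible $\tau$. Geometrically, this is just the blow-up of $\partial_{\ytwo}^2(\yone-\ytwo)^p$ at $\ytwo=\yone$ in the sub-quadratic regime.

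For the second assertion I would substitute $\ytwo = -\yone$ and simplify using $1-\gamma = 2\tau^2/(1+\tau^2)$ and $1+\gamma = 2/(1+\tau^2)$ to obtain
\[
f_1 = \tfrac{4\tau^2\yone^2}{1+\tau^2},\qquad f_2 = \tfrac{4\tau^2\yone^2[(p-1)\tau^2+1]}{(1+\tau^2)^2},\qquad f_3 = 2\yone.
\]
All powers of $1+\tau^2$ then cancel cleanly, and after collecting terms one arrives at
\[
M_{\ytwo\ytwo}(\yone,-\yone,\ythr) = p\cdot 2^{p-2}\yone^{p-2}\Bigl[\,|\tau|^{p-2}\bigl((p-1)\tau^2+1\bigr) - (p-1)\left(\tfrac{1}{(p-1)^2}+\tau^2\right)^{p/2}\Bigr].
\]
Positivity therefore reduces to the scalar inequality
\[
|\tau|^{p-2}\bigl((p-1)\tau^2+1\bigr) \;>\; (p-1)\left(\tfrac{1}{(p-1)^2}+\tau^2\right)^{p/2},\qquad |\tau|\leq\tfrac12,\ 1<p<2.
\]

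The main obstacle is verifying this last inequality, which turns out to be sharp. Near $\tau=0$ it holds trivially: the left side blows up like $|\tau|^{p-2}$ while the right side is bounded. The delicate regime is $p\to 2^-$. Setting $a:=p-1 \in (0,1)$ and $u:=a^2\tau^2 \in [0,a^2/4]$ transforms the inequality into $(u+a)^2 > (1+u)^{a+1}u^{1-a}$; at the corner $(a,u)=(1,1/4)$, corresponding to $p=2$ and $|\tau|=\tfrac12$, both sides equal $25/16$, which explains why $\tfrac12$ is exactly the cutoff advertised in the lemma. My plan for the verification is to study the ratio $\Psi(u):=(u+a)^{-2}(1+u)^{a+1}u^{1-a}$ on $(0,a^2/4]$: a logarithmic-derivative computation should show that $\Psi$ is monotone on this interval for each fixed $a\in(0,1)$, reducing the problem to checking $\Psi(a^2/4)<1$, a smooth one-variable inequality in $a$ that is strict on $(0,1)$ and degenerates only at $a=1$.
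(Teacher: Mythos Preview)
Your approach is essentially the same as the paper's: direct substitution of the two endpoints into the formula for $M_{\ytwo\ytwo}$. The paper packages this via the sign-equivalent function $g = (1+\tau^2)^{p/2}f_3^{2-p}f_2 - (p-1)\bigl(\tfrac{1}{(p-1)^2}+\tau^2\bigr)^{p/2}f_1^{(4-p)/2}$, which is finite at $\ytwo=\yone$ (where your $M_{\ytwo\ytwo}=-\infty$ becomes simply $g<0$ since $f_3^{2-p}=0$), and for $\ytwo=-\yone$ the paper writes only ``one can now verify that $g(\yone,-\yone,\ythr)>0$'' with no further argument---so your explicit reduction to the scalar inequality and the monotonicity plan for $\Psi$ actually goes further than the paper; for the record, the logarithmic derivative of your $\Psi$ simplifies to $(1-a)(a-u)/[u(u+a)(1+u)]>0$ on $(0,a^2/4]$ since $a^2/4<a$, confirming your claimed monotonicity.
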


\begin{proof}
Let $f_1, f_2$ and $f_3$ be as in Lemma \ref{case2_2partialswhenplessthan2} and 
\beno
g = (1+\tau^2)^{\frac p2}f_3^{2-p}f_2 - (p-1)\left(\frac 1{(p-1)^2}+\tau^2\right)^{\frac p2}f_1^{\frac{4-p}{2}}.
\eeno
Note that $M_{\ytwo \ytwo}$ and $g$ have the same signs.  It is clear that $g(\yone, \yone, \ythr) < 0,$ proving the first inequality.  One can now verify that $g(\yone, -\yone, \ythr) > 0$ for $|\tau| \leq \frac 12$ which proves the second inequality.  
\end{proof}  

\begin{remark}One can see in the graph of $\frac{1}{\yone^{p-2}}g(\yone, \yone, \ythr)$ that $g(\yone, \yone, \ythr)<0,$ in regions $A$ and $C,$ (see Figure \ref{signMy2y2pless2}).  This tells us that the Bellman candidate from Case $(2_2)$ will maintain restrictive concavity throughout the domain in for $(\tau, p) \in A \cup C.$  Furthermore, there will be an improvement in the constant $((p^*-1)^2+\tau^2)^{\frac p2}$ that can still be used to still maintain restrictive concavity in $A \cup C.$  
\end{remark}

By Lemmas \ref{case2_2glueingto3_2} and \ref{firstsolutionplessthan2} we obtain a partial Bellman candidate from Case $(2_2),$ when $1<p<2$ and $|\tau| \leq \frac 12.$  As before, we will glue this partial candidate from Case $(2_2)$ to the partial candidate in Case $(3_2)$ to obtain the Bellman candidate for $1<p<2.$

\section{Addendum 2} \label{addendum}

Now that we have particular cases in which the Monge--Amp\`ere solution gives a Bellman function candidate, we would like to discuss the remaining cases.  It can be shown that all remaining cases do not yield a Bellman function candidate, except for Case $(4)$ which is still not determined.
\subsection{\bf Case $(1_2)$ for $1<p<2$ and Case $(3_2)$ for $2<p<\infty$ do not lead to a Bellman candidate}
It was shown in Lemmas \ref{My3y3pgreaterthan2}, \ref{checkconcavity3_2} that the Monge--Amp\`ere solution obtained in each case does not have the appropriate restrictive concavity property to be a Bellman function candidate.  We mention this here again simply for clarity.
\subsection{\bf Case $(1_1)$ does not give a Bellman candidate}\label{subsec1_1nosolution}
We can consider Cases $(1_1)$ and $(3_1)$ simultaneously, for part of the calculation, since the same argument will work in both cases.  In both cases, $y_2$ is fixed and the Monge--Amp\`ere solution is given by $M(y) = t_1 y_1 + t_3 y_3 + t_0$ on the characteristics $dt_1 y_1 + dt_3 y_3 + dt_0 = 0.$  As shown in Figure \ref{characteristic1_1and3_1}, $\ytwo \geq 0$ in case $(1_2)$ and $\ytwo \leq 0$ in Case $(3_2),$  since if not then the characteristics go outside of the domain $\Xi_+.$
\begin{figure}[h]
  \centering
 \setlength{\unitlength}{0.8cm}
\subfigure[Case $(1_1)$]{
\begin{picture}(6,6)(0,-1)
\put(-1,0){\vector(1,0){5}} 
\put(0,-1.5){\vector(0,1){6}} 
\put(4.2,-0.1){$y_1$} 
\put(2,.52){$U$} 
\put(1, -.45){$y_2$} 
\put(0.2,4){$y_3$} 

\qbezier(1,0)(2,.5)(3.4,3) 

\put(2,.9){\line(-1,1){1}} 
\put(1,-.1){\line(0,1){3.1}} 
\end{picture}}                
\subfigure[Case $(3_1)$]
{\setlength{\unitlength}{0.8cm}
\begin{picture}(6,6)(0,-1)
\put(-1,0){\vector(1,0){6}} 
\put(0,-1.5){\vector(0,1){6}} 
\put(5.2,-0.1){$y_1$} 
\put(2,1.52){$U$} 
\put(0.8, -.45){$-y_2$} 
\put(1,-.1){\line(0,1){.2}} 
\put(0.2,4){$y_3$} 

\qbezier(1,1)(2,1.5)(3.4,4) 

\put(2,1.9){\line(-1,1){1}} 
\put(1,1){\line(0,1){3.1}} 

\end{picture}}
\caption{Sample characteric for Monge--Amp\`ere solution in Cases $(1_1)$ and $(3_1)$}
\label{characteristic1_1and3_1}
\end{figure}
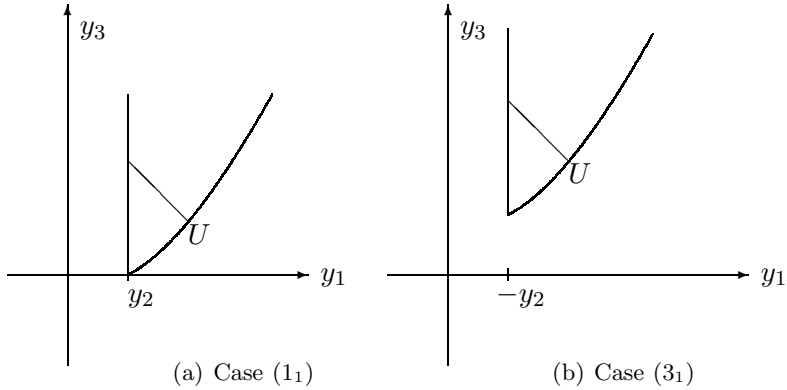

\begin{lemma}\label{solution1_1and3_1}
In Cases $(1_1)$ and $(3_1),$ the solution to the Monge--Amp\`ere can be written as, 
\beno
M(y) = \left(\frac{\sqrt{(u+\ytwo)^2+\tau^2(u-\ytwo)^2}}{u-\ytwo}\right)^p \ythr
\eeno
where $u = u(\yone, \ytwo, \ythr)$ is the solution to the equation $\frac{\yone + \left(\frac 2p -1\right)|\ytwo|}{\ythr} =  \frac{u + \left(\frac 2p -1\right)|\ytwo|}{(u-\ytwo)^p}.$
\end{lemma}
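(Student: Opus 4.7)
The plan is to mirror the arguments used in Lemmas \ref{reductionsolution1_2} and \ref{betaminumtausquarelwrbnd} almost verbatim, only now with $y_2$ (rather than $y_1$) held fixed along each characteristic. Because $y_2\geq 0$ in Case $(1_1)$ and $y_2\leq 0$ in Case $(3_1)$, the two cases will merge cleanly through $|y_2|$.

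First I would parameterize each characteristic by its endpoints: $U=(u,\ytwo,(u-\ytwo)^p)$ on the Dirichlet face $\{\ythr=(\yone-\ytwo)^p\}$, and $W=(|\ytwo|,\ytwo,w)$ on either $\{\yone=\ytwo\}$ (Case $(1_1)$) or $\{\yone=-\ytwo\}$ (Case $(3_1)$). By Proposition \ref{Pogorelov} the coefficients $t_1,t_3$ are constant on each characteristic and satisfy $M_{y_1}=t_1,\ M_{y_3}=t_3$; the Neumann condition of Proposition \ref{Bellmanprop}(iii) gives $M_{y_1}=\sigma M_{y_2}$ at $W$, where $\sigma=+1$ in Case $(1_1)$ and $\sigma=-1$ in Case $(3_1)$.

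Next, applying the homogeneity relation (v) at $W$, a short computation in either case yields
\begin{equation*}
t_0=\Bigl(\tfrac{2}{p}-1\Bigr)|\ytwo|\,t_1,
\end{equation*}
so along each characteristic the ODE $y_1\,dt_1+y_3\,dt_3+dt_0=0$ reduces to
\begin{equation*}
\bigl(\yone+(\tfrac{2}{p}-1)|\ytwo|\bigr)\,dt_1+\ythr\,dt_3=0,
\end{equation*}
and the Monge--Amp\`ere solution simplifies to $M(y)=\bigl(\yone+(\tfrac{2}{p}-1)|\ytwo|\bigr)t_1+\ythr t_3$. Substituting $dt_3/dt_1$ from the ODE then shows, exactly as in Lemma \ref{reductionsolution1_2}, that $M(y)/\ythr$ is constant along each characteristic. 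Evaluating at $U$ via the Dirichlet data of Proposition \ref{Bellmanprop}(ii) produces
\begin{equation*}
\frac{M(y)}{\ythr}=\left(\frac{\sqrt{(u+\ytwo)^2+\tau^2(u-\ytwo)^2}}{u-\ytwo}\right)^{\!\!p},
\end{equation*}
while constancy of $-dt_3/dt_1$ along the characteristic (evaluated at $y$ and at $U$) gives the implicit equation for $u=u(y)$ stated in the lemma.

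Finally, I would verify that this implicit equation has a unique admissible solution, following the monotonicity argument of Lemma \ref{reductionsolution1_2}: the function
\begin{equation*}
f(u):=\ythr\bigl[u+(\tfrac{2}{p}-1)|\ytwo|\bigr]-(u-\ytwo)^p\bigl[\yone+(\tfrac{2}{p}-1)|\ytwo|\bigr]
\end{equation*}
is strictly increasing in $u$ for $u>\ytwo$ and changes sign between the endpoints of the admissible range (determined by the requirement that the characteristic start on the Dirichlet face and terminate at $W$), yielding existence and uniqueness. The main bookkeeping obstacle is keeping the sign of $\ytwo$ straight and verifying that the endpoint computation and the formula for $t_0$ really do collapse to the same $|\ytwo|$-form in both cases; once that is done the rest is an almost mechanical transcription of the earlier arguments.
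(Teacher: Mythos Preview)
Your approach is essentially identical to the paper's: parameterize the characteristic by endpoints $U=(u,\ytwo,(u-\ytwo)^p)$ and $W=(|\ytwo|,\ytwo,w)$, combine the Neumann condition at $W$ with the homogeneity relation to obtain $t_0=(\tfrac{2}{p}-1)|\ytwo|\,t_1$, and then follow the reduction of Lemma~\ref{reductionsolution1_2} verbatim to arrive at the stated formula and the implicit equation for $u$.

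The one substantive difference is your final paragraph. The paper \emph{deliberately omits} the existence/uniqueness check for $u$, noting that since Cases~$(1_1)$ and~$(3_1)$ will later be shown to violate restrictive concavity, there is no need to verify that the equation for $u$ actually has a unique solution. Your proposed monotonicity argument is in any case flawed: for your function $f$ one computes
\[
f'(u)=\ythr-p(u-\ytwo)^{p-1}\bigl[\yone+(\tfrac{2}{p}-1)|\ytwo|\bigr],
\]
and since the bracketed factor is positive in $\Xi_+$, $f'$ changes sign (positive at $u=\ytwo$, tending to $-\infty$ as $u\to\infty$), so $f$ is not strictly increasing on $u>\ytwo$. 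Simply drop that paragraph; the rest of your argument matches the paper.
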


\begin{proof}
Any characteristic, in Cases $(1_1)$ and $(3_1)$, go from $U = (u, \ytwo, (u-\ytwo)^p)$ to $W = (|\ytwo|, \ytwo, w).$  Throughout the proof we will use the properties of the Bellman function derived in Proposition \ref{Bellmanprop}.  Using the Neumann property and the property from Proposition \ref{Pogorelov} we get $\ytwo M_{y_2} = \yone M_{y_1} = |\ytwo|t_1$ at $W.$  By homogeneity at $W$ we get 
\beno
p|y_2|t_1 + pwt_3 + pt_0 = pM(W) = y_1 M_{y_1} + \ytwo M_{y_2} + p\ythr M_{y_3} = 2t_1|y_2| + pwt_3
\eeno

Following the same argument as in Lemma \ref{reductionsolution1_2}, gives
$M(y) = \left(\frac{\sqrt{(u+\ytwo)^2+\tau^2(u-\ytwo)^2}}{u-\ytwo}\right)^p \ythr,$
where $u = u(\yone, \ytwo, \ythr)$ is the solution to the equation 

\be
\label{chreq1_1and3_1}
\frac{\yone + \left(\frac{2}{p}-1\right)|\ytwo|}{\ythr} = \frac{u + \left(\frac{2}{p}-1\right)|\ytwo|}{(u - \ytwo)^p}.
\ee
Since the solution, $M,$ does not satisfy the restrictive concavity property necessary to be the Bellman function (as we will soon show), we are not concerned about existence of the solution $u$ in equation (\ref{chreq1_1and3_1}). \qedhere
\end{proof}

\begin{lemma}\label{sgnytwo}
If $\om = \left(\frac{M(y)}{\ythr}\right)^{\frac 1p},$ then in Cases $(1_1)$ and $(3_1)$, the solution $u$ to equation (\ref{chreq1_1and3_1}) can be expressed as $u = \frac{\sbeta +1}{\sbeta -1}\ytwo$ and equation (\ref{chreq1_1and3_1}) can be rewritten as 
\be \label{halfimplicitsolution1_1and3_1}
2^p|\ytwo|^{p-1}[p\yone + (2-p)|\ytwo|] = \ythr |\beta -1|^{p-1}[p(\beta +1)+(2-p)|\beta-1|],
\ee
where $\beta = \sbeta.$  Furthermore, $\sign \ytwo = \sign (\beta -1).$
\end{lemma}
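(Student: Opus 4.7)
The plan is to start from the explicit form of $M$ given in Lemma \ref{solution1_1and3_1} and read off $\beta^2 = \om^2 - \tau^2$ directly. Since $\om^p = M(y)/\ythr$, we obtain
\[
\om^2 = \frac{(u+\ytwo)^2 + \tau^2(u-\ytwo)^2}{(u-\ytwo)^2} = \left(\frac{u+\ytwo}{u-\ytwo}\right)^{\!2} + \tau^2,
\]
so $\beta = \sqrt{\om^2-\tau^2} = |(u+\ytwo)/(u-\ytwo)|$.

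Next I would resolve the signs. Since the characteristic terminates at $U=(u,\ytwo,(u-\ytwo)^p)$ on the boundary $\{(\yone-\ytwo)^p = \ythr\}$ of $\Xi_+$, we must have $u-\ytwo\ge 0$, for otherwise $(u-\ytwo)^p$ is ill-defined for non-integer $p$. From Figure \ref{characteristic1_1and3_1}, in Case $(1_1)$ we have $\ytwo\ge 0$ and $u\ge\yone\ge 0$, while in Case $(3_1)$ we have $u\ge -\ytwo$; hence $u+\ytwo\ge 0$ in both cases. Therefore the absolute value can be dropped, $\beta=(u+\ytwo)/(u-\ytwo)$, and solving this linear relation gives $u=\frac{\beta+1}{\beta-1}\ytwo$ as claimed. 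The resulting identity $u-\ytwo=\frac{2\ytwo}{\beta-1}\ge 0$ then immediately forces $\sign \ytwo=\sign(\beta-1)$, so that $u-\ytwo=\frac{2|\ytwo|}{|\beta-1|}$ uniformly in both cases.

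For the reformulation of (\ref{chreq1_1and3_1}), I would first multiply through by $p(u-\ytwo)^p$ to get
\[
(u-\ytwo)^p\bigl[p\yone+(2-p)|\ytwo|\bigr] = \ythr\bigl[pu+(2-p)|\ytwo|\bigr].
\]
On the left, $(u-\ytwo)^p=2^p|\ytwo|^p/|\beta-1|^p$. On the right, substituting $u=\frac{\beta+1}{\beta-1}\ytwo$ and using $\sign\ytwo=\sign(\beta-1)$, a short algebraic simplification yields
\[
pu+(2-p)|\ytwo| = \frac{|\ytwo|}{|\beta-1|}\bigl[p(\beta+1)+(2-p)|\beta-1|\bigr],
\]
where the two regimes ($\ytwo>0, \beta>1$ and $\ytwo<0, \beta<1$) are handled at once by absolute values: when $\beta>1$ both bracketed expressions reduce to $2(\beta+p-1)$, and when $\beta<1$ they reduce to $2((p-1)\beta+1)$. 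Dividing through by $|\ytwo|/|\beta-1|^p$ produces the claimed identity. The only real subtlety is this uniform bookkeeping of signs across the two cases, but it is routine rather than a genuine obstacle.
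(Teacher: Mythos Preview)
Your proof is correct and follows essentially the same route as the paper: invert the formula for $\omega$ using $u\pm y_2\ge 0$ to obtain $\beta=(u+y_2)/(u-y_2)$, solve for $u$, read off the sign relation from $u-y_2=2y_2/(\beta-1)\ge 0$, and substitute into \eqref{chreq1_1and3_1}. Your version is more explicit in the sign bookkeeping (the paper simply says ``using the properties $\omega\ge|\tau|$ and $u\pm y_2\ge 0$'' and ``simply plug in''), but the underlying argument is the same; one small note is that your justification $u\ge y_1$ in Case $(1_1)$ is not really needed---the cleaner reason for $u\pm y_2\ge 0$ in both cases is that $U=(u,y_2,(u-y_2)^p)\in\Xi_+$ forces $u\ge|y_2|$.
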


\begin{proof}
Let us show that $u = \frac{\sbeta +1}{\sbeta -1}\ytwo$ first.  This follows from inverting 
\beno
\om = \frac{\sqrt{(u+\ytwo)^2+\tau^2(u-\ytwo)^2}}{u-\ytwo},
\eeno
and using the properties $\om \geq |\tau|$ and $u \pm \ytwo \geq 0.$  Now that we have $u = \frac{\sbeta +1}{\sbeta -1}\ytwo, $ we can use it to get the next result.  Note that $u \geq 0$ and $\sbeta \geq 0,$ which implies that $\sign \ytwo = \sign (\sbeta -1).$  To get (\ref{halfimplicitsolution1_1and3_1}), simply plug $u = \frac{\sbeta +1}{\sbeta -1}\ytwo$ in equation (\ref{chreq1_1and3_1}). \qedhere
\end{proof}
We can no longer discuss Cases $(1_1)$ and $(3_1)$ together, so for the remainder of the Subsection the focus will be on Case $(1_1)$ only.

\begin{lemma} \label{partialsolutioncase1_1}
In Case $(1_1),$ the solution $M$ from Lemma \ref{solution1_1and3_1} can be rewritten in the  implicit form $G(\ytwo + \yone, \ytwo -\yone) = \ythr G(\sbeta, -1),$ where $G(z_1, z_2) = (z_1 + z_2)^{p-1}[z_1 - (p-1)z_2].$
\end{lemma}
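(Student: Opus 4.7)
The plan is to reduce the implicit relation from Lemma \ref{sgnytwo} to the claimed form by a direct algebraic simplification; the proof is essentially bookkeeping once the sign of $\beta-1$ is pinned down. First I would observe that in Case $(1_1)$ the characteristics live in the half-plane $\ytwo \geq 0$ of $\Xi_+$, as shown in Figure \ref{characteristic1_1and3_1}(a). Combined with the sign relation $\sign \ytwo = \sign(\beta -1)$ supplied by Lemma \ref{sgnytwo}, this gives $\beta \geq 1$, so both absolute values appearing in equation (\ref{halfimplicitsolution1_1and3_1}) may be dropped. The working form of the identity to be recast becomes
\begin{equation*}
2^p \ytwo^{p-1}[p\yone + (2-p)\ytwo] = \ythr (\beta -1)^{p-1}[p(\beta +1)+(2-p)(\beta-1)].
\end{equation*}

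Next I would rewrite each side in terms of $G(z_1,z_2)=(z_1+z_2)^{p-1}[z_1-(p-1)z_2]$. On the left, the two algebraic identities $2\ytwo = (\ytwo+\yone)+(\ytwo-\yone)$ and $p\yone+(2-p)\ytwo=(\ytwo+\yone)-(p-1)(\ytwo-\yone)$ make the left-hand side equal to $2\,G(\ytwo+\yone,\ytwo-\yone)$. On the right, the bracket simplifies to $p(\beta+1)+(2-p)(\beta-1)=2(\beta+p-1)$, and since $\beta+p-1 = \sbeta-(p-1)(-1)$, one recognises the right-hand side as $2\ythr\,G(\sbeta,-1)$. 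Dividing by $2$ delivers $G(\ytwo+\yone,\ytwo-\yone)=\ythr\,G(\sbeta,-1)$, which is the desired implicit form.

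I do not anticipate any real obstacle: the argument is purely algebraic, and the only subtle point is the sign bookkeeping, which is forced on us by Lemma \ref{sgnytwo}. It is worth noting that this bookkeeping is precisely what distinguishes Case $(1_1)$ from Case $(3_1)$; in the latter, $\ytwo \leq 0$ forces $\beta \leq 1$, so a different sign combination appears inside $G$ and one would instead obtain a relation of the form $G(\ytwo-\yone,\ytwo+\yone)=\ythr\,G(-1,\sbeta)$ (or some permutation of arguments consistent with the symmetry property in Proposition \ref{Bellmanprop}).
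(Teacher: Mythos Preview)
Your proof is correct and follows essentially the same approach as the paper: use the sign relation $\sign\ytwo=\sign(\beta-1)$ from Lemma~\ref{sgnytwo} to drop the absolute values in~(\ref{halfimplicitsolution1_1and3_1}), then recognise the $G$-structure on both sides. The only cosmetic difference is that the paper briefly passes through the $x$-coordinates (obtaining $G(x_2,-x_1)=G\bigl(\sqrt{B^{2/p}-\tau^2x_3^{2/p}},-x_3^{1/p}\bigr)$ as an intermediate step) before converting back to $y$, whereas you stay in the $y$-variables throughout; your route is arguably a touch more direct.
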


\begin{proof}
Recall that for Case $(1_1)$ we have $\ytwo > 0.$ 
\beno
\ytwo = \frac 12 (x_2 -x_1)>0 \quad\implies\quad x_2 > x_1
\eeno
\beno
\sign (\sbeta -1) = \sign \ytwo > 0 \quad\implies\quad \sbeta > 1 \quad\implies\quad \om > \sqrt{\tau^2 + 1}
\eeno
So, $\B(x) = M(y) > \ythr (\tau^2 +1)^{\frac p2}.$  Now (\ref{halfimplicitsolution1_1and3_1}) can be rewritten as
\beno
(x_2 -x_1)^{p-1}[(p-1)x_1 + x_2]=\left[\sqrt{\B^{\frac 2p}-\tau^2 x_3^{\frac 2p}} - x_3^{\rp}\right]^{p-1}\left[\sqrt{\B^{\frac 2p}-\tau^2 x_3^{\frac 2p}} + (p-1)x_3^{\rp}\right].
\eeno
Therefore, 
\beno
G(x_2, -x_1) = G\left(\sqrt{\B^{\frac 2p}-\tau^2 x_3^{\frac 2p}}, -x_3^{\rp}\right)
\eeno
or by factoring out $x_3^{\rp}$ on the right side we get
\beno
G(\ytwo + \yone, \ytwo -\yone) = \ythr G(\sbeta, -1). \qedhere
\eeno
\end{proof}

Recall that the Monge--Amp\`ere solution must satisfy the restrictive concavity conditions in Proposition \ref{D_i} to be a Bellman function candidate.  We will show that the Monge--Amp\`ere solution obtained in Case $(1_1)$ has $D_1<0$ and therefore cannot be a Bellman candidate. 
 
\begin{lemma}\label{H''1_1}
In Case $(1_1)$ we choose $H(\yone, \ytwo) = G(y_1+y_2,-y_1+y_2)$ because of how the implicit solution is defined and obtain $\sign H'' = \sign (p-2)$
\end{lemma}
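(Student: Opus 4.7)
The plan is to treat this as a direct application of the general formula from Lemma \ref{H''1_2} to the specific $H$ arising from Case $(1_1)$, followed by a sign check that uses the geometric location of the characteristics in this case.

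First I would write $H(y_1,y_2)=G(\alpha_1 y_1+\alpha_2 y_2,\beta_1 y_1+\beta_2 y_2)$ with $\alpha_1=1,\alpha_2=1,\beta_1=-1,\beta_2=1$, which is exactly the form of the implicit solution from Lemma \ref{partialsolutioncase1_1}. Since Case $(1_1)$ assumes $D_1=0$, restrictive concavity requires checking the sign of $D_2$, so the relevant ``$H''$'' in Lemma \ref{partialsofM} is $H_{y_2 y_2}$. For $i=2$ we have $\alpha_2=\beta_2=1$, so Lemma \ref{H''1_2} gives $H''=4G_{z_1 z_2}$ evaluated at $z_1=y_1+y_2$, $z_2=-y_1+y_2$.

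Next I would invoke the derivative formula $G_{z_1 z_2}=-p(p-1)(p-2)z_2(z_1+z_2)^{p-3}$ already computed inside the proof of Lemma \ref{H''1_2}. Substituting yields
\begin{equation*}
H''=-4p(p-1)(p-2)(y_2-y_1)(2y_2)^{p-3}.
\end{equation*}
The sign of this expression is determined by the signs of $(p-2)$, $(y_2-y_1)$ and $2y_2$. From the description of the characteristics in Case $(1_1)$ (see Figure \ref{characteristic1_1and3_1} and Lemma \ref{solution1_1and3_1}), the characteristic runs from $U=(u,y_2,(u-y_2)^p)$ to $W=(y_2,y_2,w)$ with $y_2\geq 0$ and $u\geq y_2$, so along the characteristic we have $y_1\in[y_2,u]$. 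Consequently $2y_2\geq 0$ and $y_2-y_1\leq 0$, and therefore $\sign H''=-\sign(p-2)\cdot\sign(y_2-y_1)=\sign(p-2)$, which is the required identity.

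The main (mild) subtlety is bookkeeping: making sure that the $i$ in the application of Lemma \ref{H''1_2} corresponds to the variable with respect to which we need $D_i$ in Case $(1_1)$ (namely $y_2$, not $y_1$), and correctly locating the sign of $y_2-y_1$ from the geometric picture of the characteristics. There are no estimates to make; everything reduces to a substitution and a sign inspection.
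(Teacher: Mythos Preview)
Your proof is correct and follows essentially the same route as the paper: identify the coefficients $\alpha_1=1,\alpha_2=1,\beta_1=-1,\beta_2=1$, apply the general formula from Lemma~\ref{H''1_2} to get $H''=4G_{z_1z_2}=4p(p-1)(p-2)(y_1-y_2)(2y_2)^{p-3}$, and read off the sign. The paper simply records the value of $G_{z_1z_2}$ and stops, leaving the sign inspection implicit (it follows already from $y\in\Xi_+$ with $y_2>0$, without needing to invoke the full characteristic description), whereas you spell out the bookkeeping about which $D_i$ is being checked and the sign of $y_2-y_1$ explicitly; but there is no substantive difference.
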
 

\begin{proof}

We already computed 
   \begin{displaymath}
   H'' = \left\{
     \begin{array}{lr}
       4G_{z_1 z_2}, \al_j = \beta_j\\
       0, \al_j = -\beta_j
     \end{array}
   \right.
\end{displaymath} 

in Lemma \ref{H''1_2}.

Since, $\alpha_1 = 1, \alpha_2 = 1, \beta_1 = -1$ and $\beta_2 = 1$ then $G_{\!z_{_1}\!z_{_2}}=p(p-1)(p-2)(y_1-y_2)(2y_2)^{p-3}.$ \qedhere
\end{proof}

\begin{lemma}\label{D21_1}
If $p \neq 2$ then $D_2 < 0$ in Case $(1_1)$ for all $\tau.$
\end{lemma}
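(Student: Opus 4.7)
The plan is to mirror the computation of Lemma~\ref{D1whenpgreaterthan2} in Case~$(1_2)$: combine the general formula for $D_i$ from Lemma~\ref{partialsofM} with the sign of $H''$ given by Lemma~\ref{H''1_1}, and then extract the sign of $\Lambda=(p-1)\Phi'-\omega\Phi''$ through a direct computation. By~(\ref{Di}) the sign of $D_2$ equals $\sign\bigl(H''\cdot \Lambda\bigr)$ (all other factors being manifestly positive, since $\omega R_2+(p-1)R_1=\Lambda/(\Phi')^2$), and Lemma~\ref{H''1_1} already supplies $\sign H''=\sign(p-2)$. So the goal reduces to proving $\sign\Lambda = -\sign(p-2)$ on the characteristic range $\beta>1$ identified in the proof of Lemma~\ref{partialsolutioncase1_1}.

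Concretely I would take $\Phi(\omega)=G(\beta,-1)=(\beta-1)^{p-1}[\beta+(p-1)]$ with $\beta=\sqrt{\omega^2-\tau^2}$, compute $\Phi'(\omega)=p(\omega/\beta)(\beta-1)^{p-2}[\beta+(p-2)]$ via the expression for $G_{z_1}$ from the proof of Lemma~\ref{H''1_2} together with $d\beta/d\omega=\omega/\beta$, and then obtain $\Phi''$ by writing $\Phi'(\omega)=\omega F(\beta)$, so that $\Phi''=F+(\omega^2/\beta)F'$. Substituting $\omega^2=\beta^2+\tau^2$ to eliminate $\omega$ in favor of $\beta$ and grouping terms gives, after elementary algebra analogous to the derivation of~(\ref{Biglambda}) but with the opposite sign at the second argument of $G$,
\[
\Lambda \;=\; -\,\frac{p(p-2)\,\omega\,(\beta-1)^{p-3}}{\beta^{3}}\,Q(\beta), \qquad Q(\beta):=(p-1+\tau^2)\beta^{2}+\tau^{2}(p-3)\beta+\tau^{2}.
\]
The sign question thus boils down to showing $Q(\beta)>0$ for $\beta>1$, which I regard as the only nontrivial quantitative step.

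This positivity is much easier than the corresponding estimate of Lemma~\ref{gtaup} and should not really be the obstacle. One checks directly that $Q(1)=(p-1)(1+\tau^2)>0$, and that the vertex of this upward-opening parabola lies at $\beta^{*}=\tau^{2}(3-p)/[2(p-1+\tau^{2})]$. Solving $\beta^{*}<1$ yields $\tau^{2}(1-p)<2(p-1)$, which after dividing by $p-1>0$ reduces to $-\tau^{2}<2$, valid for every real $\tau$ and every $p>1$. Therefore $Q$ is strictly increasing on $[1,\infty)$ and $Q(\beta)>Q(1)>0$ for $\beta>1$.

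Combining everything, the prefactor in $\Lambda$ contributes exactly one minus sign from $-(p-2)$, all other factors being positive, so $\sign\Lambda=-\sign(p-2)$. Hence $\sign D_{2}=\sign(H''\cdot\Lambda)=\sign(p-2)\cdot[-\sign(p-2)]=-1$ whenever $p\neq2$, which is the desired conclusion $D_2<0$. The one place where I expect to need real care is the bookkeeping in the chain-rule computation producing $\Lambda$; that is routine but mirrors the~$(1_{2})$ case closely, and the key structural difference here, that the characteristic constraint reads $\beta>1$ rather than $\beta>p-1$, is precisely what makes the required positivity of $Q$ transparent rather than delicate.
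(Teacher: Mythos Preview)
Your proof is correct and follows the same strategy as the paper: compute $\Lambda=(p-1)\Phi'-\omega\Phi''$ for $\Phi(\omega)=G(\beta,-1)$ and read off $\sign D_2=\sign(H''\Lambda)$ via~(\ref{Di}) and Lemma~\ref{H''1_1}. In fact your expression
\[
\Lambda=-\frac{p(p-2)\,\omega(\beta-1)^{p-3}}{\beta^{3}}\,Q(\beta),\qquad Q(\beta)=(p-1+\tau^{2})\beta^{2}+\tau^{2}(p-3)\beta+\tau^{2},
\]
is the correct one; the paper's simpler formula $\Lambda=-p(p-1)(p-2)\omega^{3}(\beta-1)^{p-3}/\beta^{3}$ results from an algebraic slip (a missing factor $1/(p-1)$ when the $\tau^{2}$--term of $\omega\Phi''$ is absorbed into $\alpha_{p}$), and the two differ by $\tau^{2}(\beta-1)(\beta+p-2)$ inside the bracket. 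Your verification that $Q(\beta)>0$ for $\beta>1$---via $Q(1)=(p-1)(1+\tau^{2})>0$ and the vertex location $\beta^{*}<1$---is exactly the extra step this correct formula requires, and it is carried out cleanly. The sign conclusion $\sign\Lambda=-\sign(p-2)$, hence $D_{2}<0$, is unaffected.
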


\begin{proof}
We use the partial derivatives of $G$ from the proof of Lemma \ref{H''1_2} to make the computations of $\Phi'$ and $\Phi''$ easier.  Let $\al_p = \frac{p (p-1) \om (\beta -1)^{p-3}}{\beta^3}$ and $\beta = \sqrt{\om^2 - \tau^2}.$
\beno
\Phi(\om) & = & G(\beta,1)\notag \\ 
\Phi'(\om) & = & p[\beta - 1]^{p-2}[\om + (p-2)\om \beta^{-1}]\\ 
\Phi''(\om) & = & G_{z_1 z_2}(\beta, -1) \beta^{-2}\om^2 + G_{z_1}(\beta, -1)[-\om \beta^{-3} + \beta^{-1}]  \\ 
& = & p (p-1)[\beta -1]^{p-3}[\beta + p-3]\frac{\om^2}{\beta^2} - p \frac{\tau^2}{\beta^3}[\beta -1]^{p-2}[\beta + p-2]\\ 
\Lambda & = & (p-1)\Phi' - \om \Phi''\\
& = & \al_p [(\beta -1)\beta^3(\beta^{-1}(p-2) +1)-\om^2\beta(\beta+p-3) + \tau^2(\beta-1)(\beta+p-2)]\\ 
& = & \al_p[(\beta^2 + \tau^2)(\beta -1)(\beta + p-2) - \om^2 \beta (\beta + p-3)] \\ 
& = & \al_p \om^2[\beta^2 + \beta(p-2) - \beta -(p-2) - \beta^2 - \beta (p-3)]\\
& = & -\frac{p(p-1)(p-2) \om^3 (\sbeta -1)^{p-3}}{(\sbeta)^3}
\eeno
From Lemma \ref{sgnytwo}, $\sign (\beta-1) = \sign \ytwo >0$ and $\om^2 > \tau^2 > 0.$  Therefore, by Lemma \ref{H''1_1} and (\ref{Di}) $\sign D_2 = \sign H'' \sign \Lambda = - (\sign(p-2))^2 < 0.$ \qedhere
\end{proof}

Since $D_2<0$ in Case $(1_1)$ then we get the following result.

\begin{prop}
Case $(1_1)$ does not give a Bellman function candidate.
\end{prop}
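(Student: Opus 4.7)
The statement is essentially a direct corollary of Lemma \ref{D21_1} together with the restrictive concavity criterion of Proposition \ref{D_i}. My plan is to observe that by Proposition \ref{D_i}, any Bellman function candidate must in particular satisfy $D_2 = M_{y_2 y_2} M_{y_3 y_3} - (M_{y_2 y_3})^2 \geq 0$ on the relevant portion of $\Xi_+$. However, Lemma \ref{D21_1} establishes the strict inequality $D_2 < 0$ for all $\tau \in \R$ and all $p \neq 2$ in Case $(1_1)$. The two conditions are incompatible, so the Monge--Amp\`ere solution produced in Case $(1_1)$ cannot serve as a Bellman function candidate.

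\textbf{Execution.} First I would invoke Lemma \ref{partialsolutioncase1_1} to recall that in Case $(1_1)$ the Monge--Amp\`ere solution $M$ is defined implicitly by $G(\ytwo+\yone,\ytwo-\yone)=\ythr G(\sbeta,-1)$ with $\sbeta>1$ (equivalently $\ytwo>0$), so that $M$ is actually defined and smooth on an open subset of $\Xi_+$. Next I would appeal to Lemma \ref{D21_1}, whose proof already carried out the explicit computation of $\Lambda=(p-1)\Phi'-\omega\Phi''$ and showed
\[
D_2 = \frac{p^2 \omega^{2p-3} R_1^2 H^2 H''}{y_3^3}\,[\omega R_2 + (p-1)R_1]
\]
has the sign $-(\sign(p-2))^2<0$ whenever $p\neq 2$. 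Finally I would conclude by contradiction: if $M$ were a Bellman function candidate, then Proposition \ref{D_i} would force $D_2 \geq 0$ on the region of definition, contradicting $D_2 < 0$.

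\textbf{Remark on the case $p=2$.} The case $p = 2$ is excluded in Lemma \ref{D21_1}, but it is irrelevant to the statement since the Bellman function was already computed explicitly in Proposition right after the definition of $\B$, and there is no need to produce a candidate via Case $(1_1)$. I would simply note this in a sentence.

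\textbf{Main obstacle.} There is essentially no obstacle at this stage, as the hard work has been carried out in Lemma \ref{D21_1}, where one had to identify the correct $H$ (via Lemma \ref{H''1_1}) and push through the algebraic simplification of $\Lambda$ to isolate a definite sign. The only subtlety is ensuring that the failure of restrictive concavity is genuine and not an artifact of a degenerate choice of $H$; this is guaranteed by the fact that the implicit relation in Lemma \ref{partialsolutioncase1_1} fixes $H(\yone,\ytwo) = G(\yone+\ytwo,-\yone+\ytwo)$ uniquely up to the $G$-structure, and Lemma \ref{H''1_1} then pins down $\sign H''=\sign(p-2)$. The resulting product of signs in $D_2$ is unambiguously negative, which closes the argument.
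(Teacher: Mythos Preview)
Your proposal is correct and matches the paper's approach exactly: the paper states the proposition immediately after Lemma \ref{D21_1} with only the sentence ``Since $D_2<0$ in Case $(1_1)$ then we get the following result,'' so the proof is just the observation that $D_2<0$ violates the restrictive concavity criterion of Proposition \ref{D_i}. Your write-up is in fact more detailed than the paper's, but the logic is identical.
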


\subsection{\bf Case $(3_1)$ does not provide a Bellman function candidate}\label{}
Much of the work needed to show that the Monge--Amp\`ere solution cannot be the Bellman function, in Case $(3_1),$ has already been started in Section \ref{subsec1_1nosolution}.  Let us finish the argument.

\begin{lemma}
In Case $(3_1),$ the solution $M$ from Lemma \ref{solution1_1and3_1} can be rewritten in the  implicit form $G(\ytwo - \yone, -\yone - \ytwo) = \ythr G(1, -\sbeta),$ where $G(z_1, z_2) = (z_1+z_2)^{p-1}[z_1-(p-1)z_2].$
\end{lemma}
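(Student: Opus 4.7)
The plan is to mirror the calculation of Lemma \ref{partialsolutioncase1_1} almost verbatim, tracking the opposite signs that Case $(3_1)$ forces on $\ytwo$ and on $\beta - 1$. The starting point is the identity (\ref{halfimplicitsolution1_1and3_1}) from Lemma \ref{sgnytwo}, which is already in a form where both sides involve absolute values. All that remains is to decide how to remove those absolute values in the present case and then re-package each side as a value of $G$.

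First I would invoke the sector condition for Case $(3_1)$, namely $\ytwo \le 0$ (as in Figure \ref{characteristic1_1and3_1}), together with the equivalence $\sign \ytwo = \sign(\beta-1)$ proved in Lemma \ref{sgnytwo}; these give $|\ytwo| = -\ytwo$ and $|\beta-1| = 1-\beta$. Translated back to the $x$-coordinates this is the statement $x_2 < x_1$, and hence $\om < \sqrt{1+\tau^2}$, i.e.\ $\B(x) = M(y) < (1+\tau^2)^{p/2} y_3$ (the mirror image of the inequality used for $(1_1)$). Substituting these into (\ref{halfimplicitsolution1_1and3_1}) strips all absolute values and yields
\[
2^p(-\ytwo)^{p-1}\bigl[p\yone+(2-p)(-\ytwo)\bigr]
= \ythr(1-\beta)^{p-1}\bigl[p(\beta+1)+(2-p)(1-\beta)\bigr].
\]

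Next I would simplify each side algebraically to expose the shape of $G(z_1,z_2)=(z_1+z_2)^{p-1}[z_1-(p-1)z_2]$. On the left, regrouping the factor $2^p$ as $2\cdot 2^{p-1}$ and computing $p\yone+(2-p)(-\ytwo)=p\yone+(p-2)\ytwo$ turns the left-hand side into $2(-2\ytwo)^{p-1}[p\yone+(p-2)\ytwo]$; the choice $z_1=\yone-\ytwo$, $z_2=-\yone-\ytwo$ gives $z_1+z_2=-2\ytwo\ge 0$ and $z_1-(p-1)z_2 = p\yone+(p-2)\ytwo$, so this side is precisely $2\,G(\yone-\ytwo,-\yone-\ytwo)$. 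On the right, the bracket collapses to $p(\beta+1)+(2-p)(1-\beta) = 2[1+(p-1)\beta]$, and the choice $z_1=1$, $z_2=-\beta$ identifies $(1-\beta)^{p-1}[1+(p-1)\beta] = G(1,-\beta)$, so the right-hand side equals $2\ythr\,G(1,-\beta)$.

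Cancelling the common factor $2$ and recalling $\beta=\sbeta$ then yields the desired implicit relation. There is no real obstacle beyond bookkeeping: the only conceptual ingredient that distinguishes this argument from the $(1_1)$ case is Lemma \ref{sgnytwo}, which pins down the correct branch of the square root $\sqrt{\om^2-\tau^2}$ on each side of the sector, and without which the absolute values in (\ref{halfimplicitsolution1_1and3_1}) could not be resolved unambiguously. As in Case $(1_1)$, once the signs are sorted the $G$-form is essentially forced.
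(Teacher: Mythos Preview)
Your proposal is correct and follows essentially the same route as the paper: both arguments use the sign information $\ytwo<0$, $\beta<1$ from Lemma \ref{sgnytwo} to strip the absolute values in (\ref{halfimplicitsolution1_1and3_1}) and then identify each side as a value of $G$, arriving at $G(\yone-\ytwo,-\yone-\ytwo)=\ythr\,G(1,-\beta)$. The only cosmetic difference is that the paper briefly passes through $x$-coordinates (obtaining $G(x_1,-x_2)=G(x_3^{1/p},-\sqrt{B^{2/p}-\tau^2x_3^{2/p}})$) before converting back, whereas you work directly in $y$-variables; note also that both you and the paper derive the first argument of $G$ as $\yone-\ytwo$, so the ``$\ytwo-\yone$'' in the displayed statement is evidently a typo.
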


\begin{proof}
Recall that in Case $(3_2)$ we have that $\ytwo < 0.$ 
\beno
\ytwo = \frac 12 (x_2 -x_1)<0 \quad\implies\quad x_2 < x_1
\eeno
\beno
\sign (\sbeta -1) = \sign \ytwo < 0 \quad\implies\quad \sbeta < 1 \quad\implies\quad \om < \sqrt{\tau^2 + 1}
\eeno
So, $\B(x) = M(y) < \ythr (\tau^2 +1)^{\frac p2}.$  Now (\ref{halfimplicitsolution1_1and3_1}) can be rewritten as
\beno
(x_1 -x_2)^{p-1}[x_1 + (p-1)x_2]=\left[x_3^{\rp} - \sqrt{\B^{\frac 2p}-\tau^2 x_3^{\frac 2p}} \right]\left[(p-1)\sqrt{\B^{\frac 2p}-\tau^2 x_3^{\frac 2p}} + x_3^{\rp}\right].
\eeno
Therefore, 
\beno
G(x_1, -x_2) = G\left(x_3^{\rp}, -\sqrt{\B^{\frac 2p}-\tau^2 x_3^{\frac 2p}}\right),
\eeno
or by factoring out $x_3^{\rp}$ on the right side we get
\beno
G(\yone - \ytwo, -\yone -\ytwo) = \ythr G(1, -\sbeta). \qedhere
\eeno
\end{proof}

Since $\ytwo$ is fixed then $D_2 \geq 0$ must be true in order that the Monge--Amp\`ere solution from Case $(3_1)$ is the Bellman function (see Proposition \ref{D_i}).  However, the contrary is true: $D_2 <0.$

\begin{lemma}\label{H''3_1}
In Case $(3_1)$ we choose $H(\yone, \ytwo) = G(y_1-y_2,-y_1+y_2)$ because of how the implicit solution is defined and obtain $\sign H'' = \sign (p-2)$
\end{lemma}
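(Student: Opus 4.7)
My plan is to mirror the arguments of Lemmas~\ref{H''1_2}, \ref{H''3_2}, and \ref{H''1_1}. In Case~$(3_1)$ the running assumption of the section forces $D_1=0$ automatically, so the restrictive concavity check in Proposition~\ref{D_i} reduces to analysing the sign of $D_2$; by the bookkeeping of Lemma~\ref{partialsofM}, this sign is controlled jointly by the quantity $\Lambda$ and by $H''=H_{\ytwo\ytwo}$, where $H$ is the function read off from the left-hand side of the implicit Monge--Amp\`ere relation recorded in the immediately preceding lemma. That relation gives
\[
H(\yone,\ytwo)=G(\yone-\ytwo,\,-\yone-\ytwo),
\]
so the task reduces to determining the sign of $H_{\ytwo\ytwo}$.

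With $H$ in this form the coefficients in the notation $H(\yone,\ytwo)=G(\alpha_1\yone+\alpha_2\ytwo,\beta_1\yone+\beta_2\ytwo)$ are $(\alpha_1,\alpha_2,\beta_1,\beta_2)=(1,-1,-1,-1)$, and in particular $\alpha_2=\beta_2$. I would then invoke the dichotomy established inside the proof of Lemma~\ref{H''1_2}: this equality puts us in the nontrivial branch, giving $H_{\ytwo\ytwo}=4G_{z_1z_2}(\yone-\ytwo,-\yone-\ytwo)$. Substituting the explicit formula $G_{z_1z_2}(z_1,z_2)=-p(p-1)(p-2)z_2(z_1+z_2)^{p-3}$ computed in that same proof simplifies this to
\[
H''=4p(p-1)(p-2)(\yone+\ytwo)(-2\ytwo)^{p-3}.
\]

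To finish, I would read off the signs in the relevant sector. Figure~\ref{characteristic1_1and3_1}(b) records that in Case~$(3_1)$ the characteristics run from $U$ to $W\in\{\ytwo=-\yone\}$ with $\ytwo<0$, so inside $\Xi_+$ one has $-\yone\le\ytwo<0$; consequently $\yone+\ytwo\ge 0$ and $-2\ytwo>0$. Every factor besides $p-2$ is therefore nonnegative, and $\sign H''=\sign(p-2)$ follows at once.

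I do not anticipate any substantive obstacle: the argument is a near verbatim variant of Lemma~\ref{H''1_1}, with the roles of the fixed and varying coordinates interchanged. The only small point that demands a moment of care is that the exponent $p-3$ may be non-integer, so one should confirm that $(-2\ytwo)^{p-3}$ is well defined and positive as a real power; this is automatic from $-2\ytwo>0$ on the sector of interest. One should also double-check the reading of $H$ from the preceding lemma (matching $z_1+z_2=-2\ytwo$ to the identity $G_{z_1z_1}+G_{z_2z_2}=2G_{z_1z_2}$ used in the dichotomy), since picking the wrong representation would collapse $G$ on the line $z_1+z_2=0$.
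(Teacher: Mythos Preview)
Your proposal is correct and follows exactly the paper's approach: invoke the dichotomy of Lemma~\ref{H''1_2} and evaluate $G_{z_1z_2}$ at the arguments read off from the implicit relation. Your computation is in fact more accurate than the paper's own proof, which records the coefficients $(\alpha_1,\alpha_2,\beta_1,\beta_2)=(1,-1,-1,1)$ and the expression $p(p-1)(p-2)(y_1-y_2)(2y_2)^{p-3}$; these appear to be copy-paste carryovers from Lemma~\ref{H''1_1}, since with $\alpha_2=-\beta_2$ the dichotomy would force $H''=0$, and $(2y_2)^{p-3}$ is not even real for $y_2<0$ and non-integer $p$. Your choice $H(y_1,y_2)=G(y_1-y_2,-y_1-y_2)$, with $z_1+z_2=-2y_2>0$ on the Case~$(3_1)$ sector, is the correct reading and delivers the claimed sign cleanly.
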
 

\begin{proof}

We already computed 
   \begin{displaymath}
   H'' = \left\{
     \begin{array}{lr}
       4G_{z_1 z_2}, \al_j = \beta_j\\
       0, \al_j = -\beta_j
     \end{array}
   \right.
\end{displaymath} 

in Lemma \ref{H''1_2}.

Since, $\alpha_1 = 1, \alpha_2 = -1, \beta_1 = -1$ and $\beta_2 = 1$ then $G_{\!z_{_1}\!z_{_2}}=p(p-1)(p-2)(y_1-y_2)(2y_2)^{p-3}.$ \qedhere
\end{proof}

\begin{lemma}\label{D23_1}
If $p \neq 2$ then $D_2 < 0$ in Case $(3_1)$ for all $\tau.$
\end{lemma}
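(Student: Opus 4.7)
The plan is to adapt verbatim the structure of the proof of Lemma \ref{D21_1} (Case $(1_1)$), substituting the new expression for $\Phi$ and tracking how signs flip because $\ytwo<0$ in Case $(3_1)$ rather than $\ytwo>0$. Starting from the implicit relation just derived, $\Phi(\om) = G(1,-\beta) = (1-\beta)^{p-1}[1+(p-1)\beta]$ with $\beta := \sbeta$, I will compute $\Phi'$ and $\Phi''$ via the chain rule (using $\beta_\om = \om/\beta$), obtaining
$$\Phi'(\om) = -p(p-1)\om(1-\beta)^{p-2},\qquad \Phi''(\om) = -p(p-1)(1-\beta)^{p-2} + \frac{p(p-1)(p-2)\om^2(1-\beta)^{p-3}}{\beta}.$$
Forming $\Lambda = (p-1)\Phi' - \om\Phi''$ and simplifying with the identity $\om^2 = \beta^2 + \tau^2$ (so that $\beta(1-\beta)+\om^2 = \beta+\tau^2$) yields, by a routine algebraic manipulation analogous to the one in Lemma \ref{D21_1},
$$\Lambda = -\frac{p(p-1)(p-2)\om(1-\beta)^{p-3}(\beta+\tau^2)}{\beta}.$$

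The sign analysis is the key step. In Case $(3_1)$ we have $\ytwo<0$, and Lemma \ref{sgnytwo} gives $\sign(\beta-1) = \sign\ytwo < 0$; hence $1-\beta > 0$, and $(1-\beta)^{p-3}$ is well-defined and strictly positive. Combined with $\om>0$, $\beta>0$, $\beta+\tau^2>0$, and $p-1>0$, this forces $\sign\Lambda = -\sign(p-2)$. Lemma \ref{H''3_1} supplies $\sign H'' = \sign(p-2)$, so applying formula (\ref{Di}) gives
$$\sign D_2 \;=\; \sign(H'')\cdot\sign(\Lambda) \;=\; -[\sign(p-2)]^2 \;<\; 0 \quad\text{for } p\neq 2.$$

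The computation is essentially mechanical once the formula $\Phi(\om) = (1-\beta)^{p-1}[1+(p-1)\beta]$ is in hand. The only non-routine point — and the one place the argument genuinely differs from Case $(1_1)$ — is the invocation of Lemma \ref{sgnytwo} to guarantee $\beta<1$, which is what makes $(1-\beta)^{p-3}$ positive (for non-integer $p-3$ the sign would otherwise be ambiguous). This is precisely the feature that distinguishes Case $(3_1)$ from Case $(1_1)$, where $\sbeta>1$ made the analogous factor $(\sbeta-1)^{p-3}$ positive by the opposite inequality. Everything else transfers directly, and the conclusion follows.
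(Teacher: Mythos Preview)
Your proof is correct and follows essentially the same approach as the paper: compute $\Phi'$, $\Phi''$, and $\Lambda$ from $\Phi(\om)=G(1,-\beta)$, use Lemma~\ref{sgnytwo} to get $1-\beta>0$, and combine with Lemma~\ref{H''3_1} via~(\ref{Di}) to conclude $\sign D_2 = -[\sign(p-2)]^2<0$. Your final expression $\Lambda = -p(p-1)(p-2)\om(1-\beta)^{p-3}(\beta+\tau^2)/\beta$ is identical to the paper's $-p(p-1)(p-2)\om(1-\beta)^{p-3}(1+\tau^2/\beta)$.
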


\begin{proof}
We use the partial derivatives of $G$ computed in the proof of Lemma \ref{H''1_2} to make the following computations of $\Phi'$ and $\Phi''$ easier.  Let $\beta = \sqrt{\om^2 - \tau^2}.$
\beno
\Phi(\om) & = & G(1,-\beta)\notag \\ 
\Phi'(\om) & = & -p(p-1)\om(1-\beta)^{p-2}\\ 
\Phi''(\om) & = & -p(p-1)[(1-\beta)^{p-2}-(p-2)\om^2\beta^{-1}]\\ 
\Lambda & = & (p-1)\Phi' - \om \Phi''\\
& = & p(p-1)\om(1-\beta)^{p-3}[-(p-1)(1-\beta)+(1-\beta)-(p-2)\om^2\beta^{-1}]\\ 
& = & -p(p-1)\om(1-\beta)^{p-3}(p-2)[1-\beta+\om^2\beta^{-1}] \\ 
& = & -p(p-1)(p-2)\om(1-\beta)^{p-3}\left(1+\frac{\tau^2}{\beta}\right)\\
\eeno
From Lemma \ref{sgnytwo}, $1-\beta>0$ and $\om^2 > \tau^2 > 0.$  Therefore, by Lemma \ref{H''3_1} and (\ref{Di}) $\sign D_2 = \sign H'' \sign \Lambda = - (\sign(p-2))^2 < 0.$ \qedhere
\end{proof}

Having shown that $D_2 <0$ in Case $(3_1)$ implies that the Monge--Amp\`ere solution in that case cannot be the Bellman function.

\begin{prop}
Case $(3_1)$ does not give a Bellman function candidate.  
\end{prop}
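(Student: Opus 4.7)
The plan is to reduce the assertion to the restrictive concavity criterion in Proposition~\ref{D_i}, which requires that a candidate $M$ satisfy the $2\times2$ non-positivity of the Hessian block in \emph{both} coordinate pairs: namely $M_{y_j y_j}\le 0$, $M_{y_3 y_3}\le 0$, and $D_j\ge 0$ for $j=1$ \emph{and} $j=2$. In Case $(3_1)$ we have by construction imposed the Monge--Amp\`ere degeneracy $D_1=0$ on the candidate obtained in Lemma~\ref{solution1_1and3_1}, but this says nothing about the sign of $D_2$, which must be verified separately.

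My first step is therefore to invoke Lemma~\ref{D23_1}, which computes $D_2$ for the Monge--Amp\`ere solution in Case $(3_1)$ and shows that, for every $\tau\in\mathbb R$ and every $p\neq 2$,
\[
\sign D_2 \;=\; \sign H''\cdot \sign \Lambda \;=\; -(\sign(p-2))^{2}\;<\;0.
\]
This strict negativity of $D_2$ is incompatible with the hypothesis $D_2\ge 0$ in Proposition~\ref{D_i}, so the restrictive concavity property fails on a nonempty open subset of the relevant sector, and in particular the candidate cannot coincide with the true (restrictively concave) Bellman function $\M$.

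For the degenerate exponent $p=2$ there is nothing to prove, since the Bellman function is already known explicitly from the $p=2$ proposition at the start of the paper and does not arise from a Monge--Amp\`ere degeneration of this case. The only minor point I would double-check is that the sign information fed into Lemma~\ref{D23_1} is genuinely valid on the region where Case $(3_1)$'s characteristics live: in particular that $1-\beta>0$ on that region, which is supplied by Lemma~\ref{sgnytwo} (recall $\sign(\beta-1)=\sign y_2$, and in Case $(3_1)$ we are in the half $y_2<0$, equivalently $x_2<x_1$). With those signs in hand the conclusion is immediate: Case $(3_1)$ cannot yield a Bellman function candidate. I do not expect any serious obstacle; the real work was already done in Lemma~\ref{D23_1}, and the present statement is simply the logical translation of that sign computation through Proposition~\ref{D_i}.
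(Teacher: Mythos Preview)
Your proposal is correct and follows exactly the paper's approach: the proposition is simply the immediate consequence of Lemma~\ref{D23_1} (which gives $D_2<0$) together with the restrictive concavity criterion of Proposition~\ref{D_i}. The paper's own proof is a one-line remark to this effect, and your additional check of the sign hypothesis $1-\beta>0$ via Lemma~\ref{sgnytwo} is precisely the ingredient the paper uses inside Lemma~\ref{D23_1}.
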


\subsection{\bf Case$(2_1)$ gives a partial Bellman function candidate}\label{}
Case $(2)$ was considered without having to fix either $\yone$ or $\ytwo$ first, so there is nothing new to do here.  Refer to Sections \ref{partial2_2subsec} and \ref{gluingp>2} for more details.

\subsection{\bf Case $(4)$ may or may not yield a Bellman function candidate}\label{}

\begin{figure}[h]
  \centering
 \setlength{\unitlength}{0.8cm}
\subfigure[$\ytwo \geq 0$]{
\begin{picture}(6,6)(0,-1)
\put(-1,0){\vector(1,0){5}} 
\put(0,-1.5){\vector(0,1){6}} 
\put(4.2,-0.1){$y_1$} 
\put(2.2,.42){$U$} 
\put(1, -.65){$y_2$} 
\put(0.2,4){$y_3$} 

\qbezier(1,0)(2.2,.5)(3.4,3) 

\qbezier(2,.8)(2.22,1.1)(3.25,2.7) 
\put(1,-.1){\line(0,1){3.1}} 
\end{picture}}                
\subfigure[$\ytwo \leq 0$]
{\setlength{\unitlength}{0.8cm}
\begin{picture}(6,6)(0,-1)
\put(-1,0){\vector(1,0){6}} 
\put(0,-1.5){\vector(0,1){6}} 
\put(5.2,-0.1){$y_1$} 
\put(2.2,1.42){$U$} 
\put(0.8, -.65){$-y_2$} 
\put(1,-.1){\line(0,1){.2}} 
\put(-1.2,4){$y_3$} 

\qbezier(1,1)(2.2,1.5)(3.4,4) 
\qbezier(2,1.8)(2.22,2.1)(3.25,3.7) 
\put(1,1){\line(0,1){3.1}} 

\end{picture}}
\caption{Characteristic of solution in Case $(4_1).$}
\label{characteristic4_1}
\end{figure}
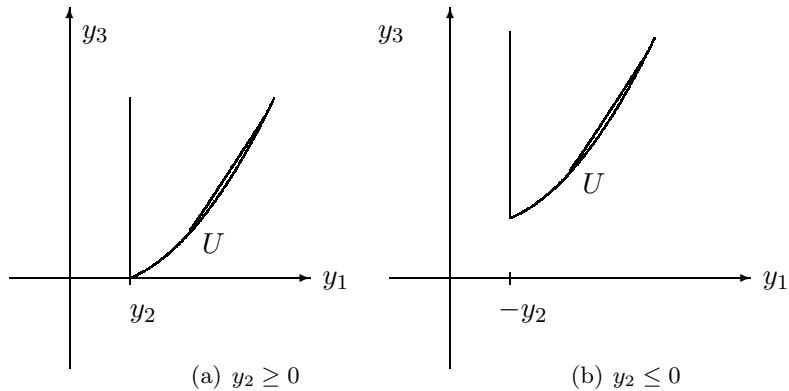
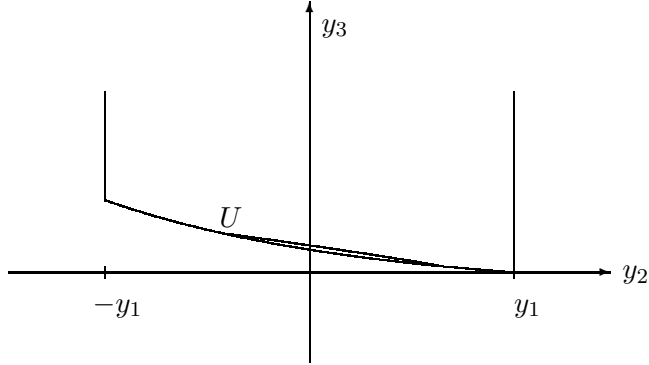
\begin{figure}[h]

\setlength{\unitlength}{0.8cm}

\begin{picture}(10,6)(-5,-1)
\put(-5,0){\vector(1,0){10}} 
\put(0,-1.5){\vector(0,1){6}} 
\put(5.2,-0.1){$y_2$} 

\put(-3.6,-.65){$-y_1$} 
\put(-3.4,-.1){\line(0,1){.2}} 

\put(-1.5,.75){$U$} 

\put(3.4, -.65){$y_1$} 
\put(0.2,4){$y_3$} 

\qbezier(3.4,0)(-0.8853,0.3)(-3.4,1.2) 

\put(-3.4,1.2){\line(0,1){1.8}} 

\qbezier(-1.3,.63)(1.1,.3)(2.2,.1) 
\put(3.4,-.1){\line(0,1){3.1}} 

\end{picture}
\caption{Characteristic for the solution from Case $(4_2)$} 
\label{characteritic4_2}
\end{figure}

For $\tau = 0,$ it was shown in \cite{VaVo2} that Case $(4)$ does not produce a Bellman function candidate, since some simple extremal functions give a contradiction to linearity of the Monge--Amp\`ere solution on characteristics.  However, for $\tau \neq 0$ it is much more difficult to show this.  Those same extremal functions do contradict linearity for some $p-$values and some signs of the Martingale transform.  For the sign of the Martingale transform where we do not have a contradiction, a new set of test of extremal functions would have to be found.  Since the Bellman function has already been constructed from other cases, this case has not been investigated any further than just described.  So, for $p$ and $\tau$ values not mentioned in the main result, Case $(4)$ could give a Bellman candidate throughout $\Xi_+$ or we could get a partial Bellman candidate that may work well with the characteristics from Case $(2_1).$

\bibliographystyle{amsplain}

\end{document}